\newcommand{\CH}{\operatorname{CH}\nolimits}
 \title{Chow--K\"unneth decomposition for $3$- and $4$-folds fibred by
   varieties with trivial Chow group of zero-cycles}
\author{Charles Vial}
\date{}
\begin{document}

\maketitle

\begin{abstract} Let $k$ be a field and let $\Omega$ be a universal
  domain over $k$.  Let $f:X \r S$ be a dominant morphism defined over
  $k$ from a smooth projective variety $X$ to a smooth projective
  variety $S$ of dimension $\leq 2$ such that the general fibre of
  $f_\Omega$ has trivial Chow group of zero-cycles. For example, $X$
  could be the total space of a two-dimensional family of varieties
  whose general member is rationally connected. Suppose that $X$ has
  dimension $\leq 4$. Then we prove that $X$ has a self-dual Murre
  decomposition, i.e.  that $X$ has a self-dual Chow--K\"unneth
  decomposition which satisfies Murre's conjectures (B) and (D).
  Moreover we prove that the motivic Lefschetz conjecture holds for
  $X$ and hence so does the Lefschetz standard conjecture.  We also
  give new examples of threefolds of general type which are Kimura
  finite-dimensional, new examples of fourfolds of general type having
  a self-dual Murre decomposition, as well as new examples of
  varieties with finite degree three unramified cohomology.
\end{abstract}

\section*{Introduction}

Throughout this paper, algebraic cycles and Chow groups are with
rational coefficients.  Let $X$ be a smooth projective complex
variety. The Hodge conjecture predicts that every Hodge class in
$H_{2i}(X) := H_{2i}(X(\C),\Q)$ is the class of an algebraic cycle. In
particular, given any two smooth projective complex varieties $X$ and
$Y$, the Hodge conjecture predicts that any morphism $f$ of Hodge
structures between $H_*(X)$ and $H_*(Y)$ comes from geometry.  By this
we mean that $f$ is induced by a correspondence between $X$ and $Y$,
that is, by an algebraic cycle on $X \times Y$. Whether the Hodge
conjecture happens to be true or not, Grothendieck pointed out that
certain morphisms of Hodge structures play a more important role in
the theory of algebraic cycles. If $X$ has pure dimension $d$, he
suggested that for all integer $i$ the K\"unneth component in
$H_{2d}(X \times X)$ inducing the projector on $H_i(X)$ should be
induced by a correspondence. He also suggested that, for all $i\leq
d$, the inverse to the Lefschetz isomorphism $H_{2d-i}(X) \r H_i(X)$
given by intersecting $d-i$ times with the class of a smooth
hyperplane section should be induced by an algebraic cycle.  The first
conjecture is usually referred to as the K\"unneth standard conjecture
and the second one to the Lefschetz standard conjecture.  Classically
\cite[4.1]{Kleiman}, it is known that the Lefschetz standard
conjecture for $X$ implies the K\"unneth standard conjecture for $X$.

If the Hodge conjecture gives a simple description of the image of the
cycle class map $\CH_i(X) \r H_{2i}(X)$ in terms of the Hodge structure
of $H_{2i}(X)$, it is a much more difficult problem to unravel the
nature of its kernel. Beilinson and Bloch, inspired by Grothendieck's
philosophy of motives, first proposed a description of such a kernel
in terms of a descending filtration on Chow groups that would behave
functorially with respect to the action of correspondences and would
be such that its graded parts would depend solely on the homological
motive of $X$.

More generally, the conjectures of Bloch and Beilinson can be
formulated for smooth projective varieties defined over any field $k$
if one uses $\ell$-adic cohomology in place of Betti cohomology. In
that setting, for $X$ smooth projective over $k$, the K\"unneth and
Lefschetz standard conjectures stipulate the existence of algebraic
cycles in $X \times X$ inducing the right action on cohomology. This
is consistent with the previous formulations. Indeed, let $X$ be a
smooth projective variety defined over a subfield $k \subseteq \C$ and
assume that there is a cycle $\Gamma \in \CH^{i}(X_\C \times_\C X_\C)$
inducing the inverse to the Lefschetz isomorphism $H_{2d-i}(X) \r
H_i(X)$. Here, $d$ is the dimension of $X$ and $i$ is a non-negative
integer $\leq d$. It is a fact that, for a smooth projective variety
$Y$ defined over $k$, the base change map $\CH_*(Y_K) \r \CH_*(Y_L)$ is
an isomorphism modulo homological equivalence for all extensions of
algebraically closed fields $L/K$ over $k$. Therefore, up to replacing
$\Gamma$ by a cycle homologically equivalent to it, we can assume that
$\Gamma$ is defined over the algebraic closure of $k$ inside $\C$.
But then, $\Gamma$ is defined over a finite extension of $k$ which can be
chosen to be Galois, say of degree $n$, and with Galois group $G$. It
is then straightforward to check that the cycle $\gamma := 1/n \cdot
\sum_{\sigma \in G} \sigma_*\Gamma$ is defined over $k$ and induces
the inverse to the Lefschetz isomorphism $H_{2d-i}(X) \r H_i(X)$. The
same arguments apply to the K\"unneth standard conjecture.

Twenty years ago, Murre proposed that not only should the K\"unneth
projectors in cohomology be induced by correspondences, but also that
they should be induced by correspondences that are idempotents modulo
rational equivalence. Given $X$ a smooth projective variety of
dimension $d$ over a field $k$, Murre \cite{Murre1} conjectured the
following. \medskip

(A) $X$ has a Chow--K\"unneth decomposition $\{\pi_0, \ldots,
\pi_{2d}\}$ : There exist mutually orthogonal idempotents $\pi_0,
\ldots, \pi_{2d} \in \CH_d(X \times X)$ adding to the identity such
that $(\pi_i)_*H_*(X)=H_i(X)$ for all $i$.

(B) $\pi_0, \ldots, \pi_{2l-1},\pi_{d+l+1}, \ldots, \pi_{2d}$ act
trivially on $\CH_l(X)$ for all $l$.

(C) $F^i\CH_l(X) := \ker(\pi_{2l}) \cap \ldots \cap \ker(\pi_{2l+i-1})$
does not depend on the choice of the $\pi_j$'s. Here the $\pi_j$'s are
acting on $\CH_l(X)$.

(D) $F^1\CH_l(X) = \CH_l(X)_\hom$, where the subscript `hom' refers to
homologically trivial cycles. \medskip

A variety $X$ which has a Chow--K\"unneth decomposition that satisfies
Murre's conjectures (B) and (D) is said to have a \emph{Murre
  decomposition}. If moreover the Chow--K\"unneth decomposition of
conjecture (A) can be chosen so that $\pi_i = {}^t\pi_{2d-i} \in
\CH_d(X \times X)$, then $X$ is said to have a \emph{self-dual Murre
  decomposition}. Here, we understand Murre's conjecture (C) as saying
that any two filtrations induced by two distinct Chow--K\"unneth
decompositions for $X$ coincide, not that they are merely isomorphic.

The relevance of Murre's conjectures was demonstrated by Jannsen
\cite{Jannsen} who showed that these hold for all smooth projective
varieties if and only if Bloch's and Beilinson's conjectures hold for
all smooth projective varieties. Murre's formulation of a conjectural
descending filtration on Chow groups has the advantage over
Beilinson's and Bloch's that it does not involve any functoriality
properties and that it can therefore be proven on a case-by-case
basis. Since Murre's paper \cite{Murre1} appeared, many authors have
tried to prove those conjectures for certain classes of varieties. In
this paper, we extend the list of cases for which these can be proven.
\medskip

Let $X$ be a smooth projective variety defined over a field $k$ and
let $\Omega$ be a universal domain over $k$, i.e.  $\Omega$ is an
algebraically closed field of infinite transcendence degree over $k$.
A smooth projective variety $X$ will be said to have trivial Chow
group of zero-cycles if $\CH_0(X_\Omega) = \Q$.  Our main result is the
following

\begin{theorem2} \label{mainth} Let $f:X \r S$ be a dominant morphism
  between smooth projective varieties defined over a field $k$ such
  that the general fibre of $f_\Omega$ has trivial Chow group of
  zero-cycles.  Suppose that $S$ has dimension $\leq 2$ and that $X$
  has dimension $\leq 4$.  Then $X$ has a self-dual Murre
  decomposition.  Moreover, the motivic Lefschetz conjecture, as stated
  in \S\ref{motLef}, holds for $X$ and hence so does the Lefschetz
  standard conjecture.
 \end{theorem2}

 Let's stress that the theorem gives a self-dual Murre decomposition
 of $X$ which is defined over a field of definition of $f$.
 Together
 with standard results on rationally connected varieties
 \cite[IV.3.11]{Kollar} (see also the proof of Corollary
 \ref{ratconncoro}), we deduce

 \begin{theorem2} \label{ratconn2}
   Let $f:X \r S$ be an equidimensional dominant morphism between
   smooth projective varieties defined over a field $k$ whose general
   fibre is separably rationally connected.  Suppose that $S$ has
   dimension $\leq 2$ and that $X$ has dimension $\leq 4$.  Then $X$
   has a self-dual Murre decomposition which satisfies the motivic
   Lefschetz conjecture.
 \end{theorem2}

 Theorem \ref{mainth} contrasts with the approach of
 Gordon--Hanamura--Murre \cite{GHM} where Chow--K\"unneth decompositions
 are constructed for varieties $X$ that come with a fibration $f : X
 \r S$ which is ``nice'' enough: it is assumed, among other things, in
 \emph{loc.  cit.} that $f$ should be smooth away from a finite number
 of points on $S$ and that $f$ should have a relative Chow--K\"unneth
 decomposition. Here, we do not even require $f$ to be flat.

 Theorem \ref{mainth} was already proved in the case $\dim S \leq 1$ :
 a self-dual Chow--K\"unneth decomposition for which the motivic
 Lefschetz conjecture holds was constructed in \cite[4.6]{Vial3} and
 Murre's conjectures were checked to hold in \cite[4.21]{Vial2}. The
 results in \cite{Vial2, Vial3} just mentioned are more generally
 valid for fourfolds with Chow group of zero-cycles supported on a
 curve. By Theorem \ref{isogeny}, it is the case that, if $X$ is as in
 Theorem \ref{mainth} with $\dim S \leq 1$, then $\CH_0(X_\Omega)$ is
 supported on a curve. From now on, we will therefore focus on the
 case $\dim S =2$.

 Here, Murre's conjecture (C) is proved only on the grounds that the
 idempotents of a Chow--K\"unneth decomposition for $X$ are supported
 in a specific dimension, cf.  section \ref{Murreconj}.  It is however
 fully proved under some extra assumption on $X$ in Theorem
 \ref{Murre-theorem} and Theorem \ref{C-Murre2}. Let's mention that
 del Angel and M\"uller-Stach \cite{dAMS} proved the existence of a
 Murre decomposition for threefolds fibred by conics over a surface
 (see also the recent paper \cite{MSS} where Chow--K\"unneth
 decompositions are constructed for some threefolds including conic
 fibrations). In addition to treating the four-dimensional case, our
 theorem makes more precise the result of \cite{dAMS} by showing that
 the Murre decomposition can be chosen to be self-dual and by showing
 the motivic Lefschetz conjecture for $X$.  Also our approach is
 different from \cite{dAMS}. Del Angel and M\"uller-Stach assume that
 all the fibres of $f$ are rationally connected, this allows them to
 compute the cohomology of $X$ via the Leray spectral sequence. They
 then construct idempotents modulo rational equivalence and check that
 they act as the K\"unneth projectors on cohomology. Here, we do not
 make any assumptions on the bad fibres of $f$ and we first compute
 the Chow group of zero-cycles of $X$ to only then deduce that the
 idempotents we construct act as the K\"unneth projectors on
 cohomology.  \medskip

 A word about the proof of the theorem and about the organisation of
 the paper. In section \ref{geom}, we make the simple observation that,
 for $f$ as in Theorem \ref{mainth}, $f_* : \CH_0(X_\Omega) \r
 \CH_0(S_\Omega)$ is bijective with inverse induced by an algebraic
 correspondence which is defined over a field of definition of $f$.
 Together with Theorem \ref{effective}, the proof of the existence of
 a Murre decomposition for $X$ essentially reduces to the case of
 motives of surfaces. The validity of Murre's conjectures (A), (B) and
 (D) for surfaces goes back to Murre himself \cite{Murre}.  However,
 by the very nature of Theorem \ref{effective}, we need Murre's
 conjectures not only for surfaces but for motives of surfaces (i.e.
 we need to deal with idempotents).  This is the object of section
 \ref{AP}. The construction of idempotents inducing the right
 K\"unneth projectors for $X$ in homology is carried out in section
 \ref{CK}. Constructing such idempotents is easy from the case of
 surfaces. However, these are not necessarily mutually orthogonal. The
 non-commutative Gram--Schmidt process which already appears in
 \cite{Vial3} and which is run on this set of idempotents is described
 in Lemma \ref{linalg}. This way, we obtain a \emph{self-dual}
 Chow--K\"unneth decomposition for $X$. The motivic Lefschetz
 conjecture is formulated in section \ref{motLef}, its relevance is
 discussed and it is proved for $X$ there.  Murre's conjectures (B)
 and (D) are then proved for $X$ in section \ref{Murreconj} by using
 results of \cite{Vial2} which are recalled in Proposition
 \ref{MurreB} and \ref{MurreC}.  If $C$ is a smooth projective curve,
 then the results of \emph{loc. cit.}  actually make it possible to prove
 Murre's conjectures (B) and (D) for $X \times C$. This is the object
 of section \ref{Murreconj2}.

 Murre's observation that the K\"unneth projectors should lift to
 idempotents modulo rational equivalence is crucial in the sense that
 a combination of Beilinson's and Bloch's conjectures with
 Grothendieck's standard conjectures imply that any projector in
 homology should be liftable to an idempotent modulo rational
 equivalence. Shun-Ichi Kimura \cite{Kimura} introduced a notion of
 finite-dimensionality for Chow motives which implies such a lifting
 property for projectors. Kimura's notion of finite-dimensionality has
 become widely popular for this reason and more importantly because of
 its relationship to Murre's conjectures.  The simple observation of
 Theorem \ref{isogeny} is used in section \ref{fdprob} to give new
 examples of threefolds of general type which are Kimura
 finite-dimensional, namely threefolds fibred by Godeaux surfaces.
 There, using the result of section \ref{CK}, we also show in Theorem
 \ref{conic-Kimura} that, if $X$ is a conic fibration over a surface
 which is Kimura finite-dimensional, then $X$ is Kimura
 finite-dimensional.  In section \ref{general}, we produce examples of
 fourfolds of general type with Chow group of zero-cycles not
 supported on a curve but which admit a self-dual Murre decomposition.
 Such examples will be given by fourfolds fibred by surfaces
 birational to Godeaux surfaces.  Theorem \ref{isogeny} is slightly
 generalised in Theorem \ref{repsurface} ; this is used in section
 \ref{unram} to prove finiteness of unramified cohomology in some new
 cases. \medskip

 Finally, although we don't state it here, the methods of this paper
 actually show that Murre's conjectures hold for smooth projective
 fourfolds $X$ for which there exist a smooth projective surface $S$
 and correspondences $\alpha \in \CH_2(S \times X)$ and $\beta \in
 \CH^2(X \times S)$ such that $\beta \circ \alpha = \Delta_S \in \CH_2(S
 \times S)$ and such that $\alpha \circ \beta$ acts as the identity on
 $\CH_0(X_\Omega)$. Consequently, if $f : X \r S $ is a dominant
 morphism to a surface $S$ such that the general fibre of $f_\Omega$
 has trivial Chow group of zero-cycles, then Murre's conjectures hold
 for any smooth projective variety $X'$ which is birational to $X$.

 \paragraph{Notations.} We refer to \cite{Scholl} for the notion of
 Chow motive and to \cite{KMP} for the covariant notations we use
 here.  Briefly, a Chow motive $M$ is a triple $(X,p,n)$ where $X$ is
 a smooth projective variety over $k$ of pure dimension $d$, $p \in
 \CH_d(X\times X)$ is an idempotent ($p\circ p = p$) and $n$ is an
 integer. The motive $M$ is said to be effective if $n \geq 0$. The
 dual of $M$ is the motive $M^\vee := (X,{}^tp,-d-n)$, where ${}^tp$
 denotes the transpose of $p$. A morphism between two motives
 $(X,p,n)$ and $(Y,q,m)$ is a correspondence in $q \circ \CH_{d+n-m} (X
 \times Y) \circ p$. We write $\h(X)$ for the motive of $X$, i.e. for
 the motive $(X,\Delta_X,0)$ where $\Delta_X$ is the class of the
 diagonal inside $\CH_d(X \times X)$. We have $\CH_i(X,p,n) =
 p_*\CH_{i-n}(X)$ and $H_i(X,p,n) = p_*H_{i-2n}(X)$, where we write
 $H_i(X) := H^{2d-i}(X(\C),\Q)$ for Betti homology when $k \subseteq
 \C$. The results of this paper are valid more generally for any field
 $k$ if one considers $\ell$-adic homology $H_i(X,\Q_\ell) :=
 H^{2d-i}(X_{\bar{k}},\Q_\ell)$ with $\ell \neq \mathrm{char} \ k$ in
 place of Betti homology. In that case the action of a correspondence
 $\Gamma \in \CH_i(X \times Y)$ on $H_i(X,\Q_\ell)$ is given by the
 action of $\Gamma \otimes_\Q 1 \in \CH_i(X \times Y) \otimes_\Q
 \Q_\ell$.

 \paragraph{Acknowledgements.} I am grateful to Sergey Gorchinskiy for
 his interest and for providing me with a simpler construction of the
 Albanese projector.  Thanks to the referee for his detailed remarks
 and numerous suggestions.  Thanks to Mingmin Shen for stimulating
 discussions and Burt Totaro for useful comments.  This work is
 supported by a Nevile Research Fellowship at Magdalene College,
 Cambridge and an EPSRC Postdoctoral Fellowship under grant
 EP/H028870/1. I would like to thank both institutions for their
 support.

\section{A geometric result on zero-cycles} \label{geom}

Let $X$ and $S$ be smooth projective varieties over $k$ and let $f : X
\r S$ be a dominant morphism. Then any general linear section $\sigma
: H \rightarrow X$ of dimension $\dim S$ is smooth over $k$ and is
such that the morphism $\pi :=f|_H : H \r S$ is dominant. In
particular, for any general $H$, $\pi$ is generically finite and its
degree is written $n$.

\begin{proposition} \label{surjective} Let $f: X \rightarrow S$ be a
  dominant morphism. Then, for any general $H$ as above, $\Gamma_f
  \circ \Gamma_\sigma \circ {}^t\Gamma_\pi = n\cdot \Delta_S \in
  \CH_{\dim S}(S \times S)$.
 \end{proposition}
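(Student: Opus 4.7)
The plan is to reduce the identity to the classical fact that $\pi_*\pi^* = \deg(\pi)\cdot \mathrm{id}$, repackaged as an equality of correspondences. The crucial observation is that $\pi = f\circ \sigma$ as morphisms $H \to S$, so graphs should compose accordingly.

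The first step is to verify the functoriality of graphs, namely $\Gamma_f \circ \Gamma_\sigma = \Gamma_{f\circ\sigma} = \Gamma_\pi$ in $\CH_{\dim H}(H\times S)$. Working inside $H \times X \times S$, the pullbacks $p_{12}^*\Gamma_\sigma$ and $p_{23}^*\Gamma_f$ meet set-theoretically along $\{(h,\sigma(h),f(\sigma(h))) : h \in H\}$, a copy of $H$ of the expected dimension, so the intersection is proper and its image under $p_{13}$ is precisely $\Gamma_\pi$. With this in hand, the proposition reduces to the identity $\Gamma_\pi \circ {}^t\Gamma_\pi = n\cdot \Delta_S$ in $\CH_{\dim S}(S\times S)$.

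I would then prove this reduced identity by a direct intersection-theoretic computation in $S\times H \times S$. The cycles $p_{12}^*{}^t\Gamma_\pi$ and $p_{23}^*\Gamma_\pi$ have dimension $2\dim S$ each, and their set-theoretic intersection is $\{(s,h,s') : \pi(h)=s=s'\}$, a copy of $H$ of the expected dimension $\dim S$. Since $H$ is smooth (as a general linear section, by Bertini) and $\pi$ is \'etale on a dense open subset of $H$, the intersection is generically transverse, and being integral of the expected dimension it is scheme-theoretically $H$ with multiplicity one. The projection to $S\times S$ factors as $h \mapsto (\pi(h),\pi(h))$, a degree-$n$ cover of the diagonal $\Delta_S$, and so pushes the intersection forward to $n\cdot \Delta_S$.

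I do not foresee any serious obstacle: both compositions take place in smooth ambient products, the dimension counts guarantee proper intersection throughout, and the only quantitative ingredient is the degree of $H \to \Delta_S$, which by construction is $\deg(\pi)=n$. The whole argument is really the standard identity $\pi_*\pi^* = \deg(\pi)\cdot\mathrm{id}$ transcribed into the language of correspondences via the functoriality of graphs.
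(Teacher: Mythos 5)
Your argument is correct, but it proves the identity by a different mechanism than the paper. The paper disposes of the proposition in one line: since $\pi=f\circ\sigma$, the composite correspondence is $\Gamma_\pi\circ{}^t\Gamma_\pi$, whose action on Chow groups (after any base change $-\times T$) is $\pi_*\pi^*=\deg(\pi)\cdot\mathrm{id}$ by the projection formula, and Manin's identity principle then upgrades this equality of operators to an equality of cycles in $\CH_{\dim S}(S\times S)$. You instead compute the cycle $\Gamma_\pi\circ{}^t\Gamma_\pi$ directly in $S\times H\times S$: proper intersection, multiplicity one, and pushforward of $H$ onto $\Delta_S$ with degree $n$. Both routes are valid; yours is more self-contained and exhibits the cycle explicitly, while the paper's is shorter and sidesteps all transversality bookkeeping. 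One small remark on your multiplicity-one step: invoking generic \'etaleness of $\pi$ is both unnecessary and slightly delicate over fields of positive characteristic (a generically finite dominant map need not be generically \'etale, and $n$ must be the full degree $[k(H):k(S)]$ including any inseparable part). In fact the transversality is automatic from the shape of the two cycles: at any point $(\pi(h_0),h_0,\pi(h_0))$ the tangent spaces are $\{a=d\pi(b)\}$ and $\{c=d\pi(b)\}$, whose sum is everything whenever $H$ is smooth at $h_0$, with no condition on $d\pi$; and the final pushforward $h\mapsto(\pi(h),\pi(h))$ contributes the full field-extension degree $n$ in any characteristic. With that adjustment your proof works over an arbitrary base field, matching the generality the paper needs.
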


\begin{proof} This follows from the projection formula applied to $\pi
  = f \circ \sigma$ and Manin's identity principle. See \cite[Example
  1 p. 450]{Manin}.
 \end{proof}

\begin{definition}
  Let $f: X \rightarrow S$ be a dominant morphism between smooth
  projective varieties defined over a field $k$.  A general point of
  $S$ is a closed point sitting outside a given proper closed subset
  of $S$. By \emph{general fibre} of $f$, we mean the fibre of $f$
  over a general point of $S$.
\end{definition}

\begin{theorem} \label{isogeny} Let $f: X \rightarrow S$ be a dominant
  morphism between smooth projective varieties defined over a field
  $k$.  Assume that a general fibre $Y$ of $f$ satisfies $\CH_0(Y) =
  \Q$.
  Then the induced map $f_* : \CH_0(X) \r \CH_0(S)$ is bijective and
  its inverse is induced by a correspondence $\Gamma \in \CH^{\dim X}(S
  \times X)$. Moreover $\Gamma$ can be chosen to be defined over a
  field of definition of $f$.
 \end{theorem}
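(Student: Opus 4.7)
The plan is to produce an explicit correspondence $\Gamma$ inducing an inverse to $f_*$ on zero-cycles via the multisection furnished by Proposition \ref{surjective}. Pick a general linear section $H \subset X$ of dimension $\dim S$, with inclusion $\sigma : H \to X$ and restricted map $\pi := f \circ \sigma : H \to S$, generically finite of degree $n$. I would set
\[
\Gamma := \frac{1}{n} \, \Gamma_\sigma \circ {}^t\Gamma_\pi \;\in\; \CH^{\dim X}(S \times X),
\]
so that Proposition \ref{surjective} reads $\Gamma_f \circ \Gamma = \Delta_S$. Applied on $\CH_0(S)$ this gives $f_* \circ \Gamma_* = \mathrm{id}$, whence $f_*$ is surjective and $\Gamma_*$ is an injective section. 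The content is then to verify $\Gamma_* \circ f_* = \mathrm{id}$ on $\CH_0(X)$. For a closed point $x \in X$ whose image $s := f(x)$ lies in the dense open $U \subseteq S$ over which $\pi$ is \'etale and the fibre $f^{-1}(s)$ has trivial Chow group of zero-cycles, this is immediate: the zero-cycles $[x]$ and $\Gamma_*(f_*[x])$ both lie in $\CH_0(f^{-1}(s))$ and one computes, unfolding $\Gamma_*(f_*[x]) = \frac{1}{n}[\kappa(x):\kappa(s)] \sum_{h \in \pi^{-1}(s)} [\sigma(h)]$, that they have the same degree $[\kappa(x):\kappa(s)]$ over $\kappa(s)$. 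Triviality of $\CH_0$ of the fibre then forces equality in $\CH_0(f^{-1}(s))$, hence a fortiori in $\CH_0(X)$.

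The main obstacle is the case where $s = f(x)$ lies in the closed complement $B := S \setminus U$: the fibre $\CH_0$ need no longer be trivial, so the difference $[x] - \Gamma_*(f_*[x]) \in \CH_0(f^{-1}(s))$ need not vanish inside the fibre, even though it must vanish once pushed forward to $\CH_0(X)$. To overcome this I would exploit that the correspondence $\gamma := \Delta_X - \Gamma \circ \Gamma_f \in \CH_{\dim X}(X \times X)$ is supported on $X \times_S X$ and, by the good-fibre computation, vanishes on $\CH_0(f^{-1}(U))$; combining this with the localization sequence $\CH_0(f^{-1}(B)) \to \CH_0(X) \to \CH_0(f^{-1}(U)) \to 0$ and a Bloch--Srinivas type argument using the triviality of $\CH_0$ of the generic fibre of $f$ would force $\gamma_*$ to vanish on $\CH_0(X)$. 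Alternatively, one can argue by specialization: the degree-zero cycle $[x] - \Gamma_*(f_*[x])$, sitting in a fibre of a family whose generic fibre has trivial Chow group, degenerates from rationally trivial zero-cycles on neighbouring good fibres and so gives zero in $\CH_0(X)$.

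Finally, for the field-of-definition assertion, a general $H$ is a priori defined only over a finite Galois extension $k'/k_0$ of the field of definition $k_0$ of $f$. Averaging $\Gamma$ over $\operatorname{Gal}(k'/k_0)$, as in the Lefschetz-descent argument recalled in the introduction, produces a correspondence defined over $k_0$ still satisfying $\Gamma_f \circ \Gamma = \Delta_S$, which therefore induces the inverse of $f_*$ already over a field of definition of $f$.
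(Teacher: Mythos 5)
Your choice of $\Gamma = \frac{1}{n}\,\Gamma_\sigma\circ{}^t\Gamma_\pi$, the identity $f_*\circ\Gamma_*=\mathrm{id}$ from Proposition \ref{surjective}, and the computation over the good locus all match the paper. The genuine gap is in the treatment of points lying over the bad locus $B=S\setminus U$, which you rightly flag as the main obstacle but then address only with two sketches, neither of which works as stated. The localization sequence $\CH_0(f^{-1}(B))\to\CH_0(X)\to\CH_0(f^{-1}(U))\to 0$ points the wrong way: it reduces you to showing that $\gamma_*$ kills classes supported over $B$, which is exactly the original problem; and a Bloch--Srinivas argument applied to $\gamma=\Delta_X-\Gamma\circ\Gamma_f$ would only express $\gamma$ as a correspondence supported on $D\times X$ for some divisor $D$, not prove that $\gamma_*$ annihilates $\CH_0(X)$. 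The specialization alternative is also unfounded: a given zero-cycle on a fibre over $B$ need not arise as the specialization of zero-cycles on neighbouring good fibres, and $f$ is merely dominant, so there is no fibrewise specialization homomorphism to invoke.

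The missing idea --- and it is how the paper closes the argument --- is that you never need to touch the bad fibres. Since $f_*\circ\Gamma_*=\mathrm{id}$ already makes $\Gamma_*$ injective, it suffices to prove that $\Gamma_*$ (equivalently $\sigma_*\pi^*$) is \emph{surjective} onto $\CH_0(X)$; then $\Gamma_*$ is bijective and $f_*$ is forced to be its two-sided inverse. Surjectivity follows from the moving lemma for zero-cycles: every class in $\CH_0(X)$ is represented by a cycle $\sum a_i[p_i]$ with all $p_i\in X_U$ (with $\Q$-coefficients this is elementary --- pass through a given point a curve not contained in $f^{-1}(B)$ and move the point off the bad locus on the normalization of that curve), and your good-fibre computation then exhibits each $[p_i]$ as a rational multiple of $\sigma_*\pi^*[f(p_i)]$, hence in the image of $\Gamma_*$. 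The same lemma would also repair your direct approach: since $\gamma_*$ respects rational equivalence on $X$, one gets $\gamma_*[x]=\sum a_i\gamma_*[p_i]=0$. With bijectivity in place, your Galois-averaging for the field-of-definition claim is fine.
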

 \begin{proof} Let's show that the correspondence $\Gamma$ can be
   chosen to be $\frac{1}{n} \Gamma_\sigma \circ {}^t\Gamma_\pi$.
   According to Proposition \ref{surjective}, it suffices to prove that
   the correspondence $ \Gamma_\sigma \circ {}^t\Gamma_\pi$ induces a
   surjective map $(\Gamma_\sigma \circ {}^t\Gamma_\pi)_* : \CH_0(S) \r
   \CH_0(X)$.

   Let's fix an open subset $U$ of $S$ such that $\pi : H_U \r U$ is
   finite and such that the fibres of $f_U$ satisfy $\CH_0(X_u)=\Q$ for
   all closed points $u$ in $U$.
   Let $p$ be a closed point of $X$ and let $[p]$ denote the class of
   $p$ in $\CH_0(X)$. By Chow's moving lemma, the zero-cycle $[p] \in
   Z_0(X)$ is rationally equivalent to a zero-cycle $\alpha = \sum a_i
   \cdot [p_i]$ supported on $X_U$. This means that each $p_i$ is a
   closed point of $X$ that belongs to the open subset $X_U$ of $X$.
   Let $u_i := f(p_i)$. Since $\CH_0(X_{u_i}) = \Q$ and $\deg
   (\sigma_*\pi^*[u_i]) \neq 0$, we see that $p_i$ is rationally
   equivalent to $\sigma_*\pi^*[u_i]$ taken with an appropriate
   rational coefficient. Thus, $\sigma_*\pi^*$ is surjective on
   zero-cycles.
   \end{proof}

For example, we get as a corollary the following which is used in
\cite[Cor. 4.23]{Vial2} and \cite[Cor. 4.7]{Vial3} in the cases when
$S$ is a curve.

\begin{corollary} \label{ratconncoro} Let $f : X \rightarrow S$ be an
  equidimensional dominant morphism between smooth projective
  varieties defined over a field $k$. Assume that the general fibre of
  $f$ is separably rationally connected (e.g. $X$ could be a Fano
  fibration).  Then $f_*:\CH_0(X_\Omega) \r \CH_0(S_\Omega)$ is
  bijective and there is a correspondence $\Gamma \in \CH^{\dim X}(S
  \times X)$ such that $\Gamma_* : \CH_0(S_\Omega) \r \CH_0(X_\Omega)$
  is the inverse of $f_*$. \qed
\end{corollary}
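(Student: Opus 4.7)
The plan is to derive the corollary directly from Theorem \ref{isogeny} applied to the base change $f_\Omega \colon X_\Omega \to S_\Omega$. The only input needed beyond Theorem \ref{isogeny} is the geometric fact that separably rationally connected varieties have trivial Chow group of zero-cycles over an algebraically closed field.

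First I would base-change $f$ to the universal domain $\Omega$. Separable rational connectedness is preserved under arbitrary extensions of the ground field, so a general fibre of $f_\Omega$ is again a smooth projective separably rationally connected variety, now defined over $\Omega$. The classical result \cite[IV.3.11]{Kollar} says that any two closed points of such a variety $Y$ can be joined by a (very free) rational curve; since all points of a rational curve are rationally equivalent, one gets $\CH_0(Y) = \Q$. Hence a general fibre of $f_\Omega$ satisfies the hypothesis of Theorem \ref{isogeny}.

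Applying Theorem \ref{isogeny} to $f_\Omega$ then yields that $(f_\Omega)_* \colon \CH_0(X_\Omega) \to \CH_0(S_\Omega)$ is bijective, with inverse induced by a correspondence defined over a field of definition of $f_\Omega$. Since $f_\Omega$ is obtained from $f$ by base change, it is itself defined over $k$, and the correspondence can therefore be taken to descend to $\Gamma \in \CH^{\dim X}(S \times X)$, yielding both assertions of the corollary.

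I do not foresee any substantial obstacle: the argument is essentially a transcription of Kollár's geometric input into the hypothesis of Theorem \ref{isogeny}. The only point requiring any care is confirming that a general fibre of $f_\Omega$ remains separably rationally connected, and the role of the equidimensionality assumption on $f$ is precisely to guarantee that the s.r.c. property of the generic fibre spreads out to a dense open of $S$ so that the term ``general fibre'' behaves as expected after base change to $\Omega$.
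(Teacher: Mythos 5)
Your proposal is correct and takes essentially the same route as the paper: both reduce to Theorem \ref{isogeny} by checking that a general fibre of $f_\Omega$ is still separably rationally connected (via Koll\'ar's results, the paper spreading out from one smooth s.r.c.\ closed fibre) and hence has trivial Chow group of zero-cycles, with the descent of $\Gamma$ to $k$ supplied by the ``moreover'' clause of that theorem.
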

\begin{proof}
  According to Theorem \ref{isogeny}, it suffices to prove that a
  general fibre of $f_\Omega$ has trivial Chow group of zero-cycles.
  By considering a smooth closed fibre of $f$ which is separably
  rationally connected, we see after pulling back to $\Omega$ that
  $f_\Omega$ has a smooth closed fibre which is separably rationally
  connected. It follows from \cite[IV.3.11]{Kollar} that a general
  fibre of $f_\Omega$ is separably rationally connected and hence that
  a general fibre of $f_\Omega$ has trivial Chow group of zero-cycles.
\end{proof}

Theorem \ref{repsurface} below, which generalises Theorem
\ref{isogeny} is irrelevant to the proof of Theorem \ref{mainth}.
However, we include it here because of the interesting consequences it
has for unramified cohomology, see section \ref{unram}.

\begin{definition}
  A smooth projective variety $X$ over $k$ is said to have
  \emph{representable} Chow group of algebraically trivial $i$-cycles
  if there exists a curve $C$ over $\Omega$ and a correspondence
  $\gamma \in \CH_{i+1}(C \times X_\Omega)$ such that
  $\gamma_*\CH_0(C)_{alg} = \CH_i(X_\Omega)_{alg}$.
\end{definition}

\begin{lemma} \label{rep} Let $X$ be a smooth projective variety over
  $k$. Then the following statements are equivalent.

 $1.$ $\CH_0(X)_{\alg}$ is representable.

 $2.$ The Albanese map $\mathrm{alb}_{X_\Omega} :
  \CH_0(X_\Omega)_{alg} \r \mathrm{Alb}_{X_\Omega}(\Omega)$ is an
  isomorphism (this map is always surjective).

 $3.$ If $\iota :C \r X$ is any smooth linear
  section of $X$ of dimension $1$, the induced map $\iota_* :
  \CH_0(C_\Omega) \r \CH_0(X_\Omega)$ is surjective.
\end{lemma}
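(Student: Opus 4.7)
The plan is to establish the cyclic chain $(3) \Rightarrow (1) \Rightarrow (2) \Rightarrow (3)$. The implication $(3) \Rightarrow (1)$ is formal: taking $\gamma := [\Gamma_\iota] \in \CH_1(C_\Omega \times X_\Omega)$ for $\iota$ as in (3), surjectivity of $\iota_*$ on $\CH_0$ together with the fact that push-forward along a morphism preserves algebraic equivalence yields $\gamma_* \CH_0(C_\Omega)_{\alg} = \CH_0(X_\Omega)_{\alg}$.

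For $(2) \Rightarrow (3)$, let $\iota: C \r X$ be any smooth linear section of dimension $1$. Iterated applications of the Lefschetz hyperplane theorem give surjectivity of $H_1(C_\Omega, \Q) \r H_1(X_\Omega, \Q)$, and hence surjectivity of the induced morphism of abelian varieties $\iota_*: J(C_\Omega) = \mathrm{Alb}_{C_\Omega} \r \mathrm{Alb}_{X_\Omega}$. By functoriality of the Albanese, $\mathrm{alb}_{X_\Omega} \circ \iota_* = \iota_* \circ \mathrm{alb}_{C_\Omega}$; since $\mathrm{alb}_{C_\Omega}$ is a classical isomorphism and $\mathrm{alb}_{X_\Omega}$ is one by (2), this forces $\iota_*$ to be surjective on algebraically trivial zero-cycles. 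The push-forward of any closed point of $C_\Omega$ supplies a preimage in the residual $\Q$-summand of $\CH_0(X_\Omega)$ cut out by the degree map, so $\iota_*$ is surjective on all of $\CH_0(X_\Omega)$.

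The main content of the lemma lies in $(1) \Rightarrow (2)$, and this is where I expect the principal difficulty. Since $\mathrm{alb}_{X_\Omega}$ is always surjective, it suffices to establish injectivity. The correspondence $\gamma$ of (1) induces a morphism of abelian varieties $\phi := \mathrm{alb}_{X_\Omega} \circ \gamma_*: J(C_\Omega) \r \mathrm{Alb}_{X_\Omega}$, which is surjective by the hypothesis on $\gamma_*$. The inclusion $\ker \gamma_* \subseteq \ker \phi$ is automatic; the substantive obstacle is the reverse inclusion $\ker \phi \subseteq \ker \gamma_*$. Once this is granted, $\gamma_*$ descends through $\phi$ to a left inverse $\psi: \mathrm{Alb}_{X_\Omega}(\Omega) \r \CH_0(X_\Omega)_{\alg}$ of $\mathrm{alb}_{X_\Omega}$, which combined with its surjectivity makes it an isomorphism. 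The reverse inclusion itself is the classical weak representability theorem of Bloch: decomposing $J(C_\Omega) \sim B \oplus B'$ via Poincar\'e complete reducibility with $B = (\ker \phi)^\circ$, so that $\phi|_{B'}$ is an isogeny onto $\mathrm{Alb}_{X_\Omega}$, one shows $\gamma_*$ vanishes on $B$ and on the finitely many remaining components of $\ker \phi$. The essential technical input is Roitman's theorem identifying the torsion of $\CH_0(X_\Omega)_{\alg}$ with that of $\mathrm{Alb}_{X_\Omega}(\Omega)$, combined with divisibility of $B(\Omega)$ coming from $\Omega$ being a universal domain.
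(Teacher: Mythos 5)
The paper does not actually prove this lemma: its entire proof is a citation to Jannsen \cite[1.6]{Jannsen}, so you have attempted strictly more than the source. Your implications $(3)\Rightarrow(1)$ and $(2)\Rightarrow(3)$ are correct (for $(3)\Rightarrow(1)$ you implicitly use that a degree-zero preimage on the connected curve $C_\Omega$ is automatically algebraically trivial, which is fine; for $(2)\Rightarrow(3)$ the weak/hard Lefschetz argument for surjectivity of $J(C_\Omega)\r \mathrm{Alb}_{X_\Omega}$ and the reduction to the algebraically trivial part both work).

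The gap is in $(1)\Rightarrow(2)$, precisely at the step you flag as "one shows $\gamma_*$ vanishes on $B$". That statement \emph{is} the theorem (Roitman's 1972 finite-dimensionality theorem; see also Bloch's \emph{Lectures on algebraic cycles} or Jannsen's Lemma 1.6 itself), and the ingredients you name do not prove it. Since Chow groups here have $\Q$-coefficients, the Albanese kernel $T(X)$ is a uniquely divisible, torsion-free group; hence Roitman's torsion theorem gives no information (it only shows $\gamma_*$ kills the torsion of $B(\Omega)$, and $B(\Omega)/B(\Omega)_{\mathrm{tors}}$ is a huge $\Q$-vector space), and divisibility of $B(\Omega)$ merely shows that $\gamma_*(B(\Omega))$ is a divisible subgroup of the divisible group $T(X)$ --- no contradiction. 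What the genuine proof requires is the countability structure of rational equivalence over the universal domain $\Omega$: one shows $\ker\big(B(\Omega)\r \CH_0(X_\Omega)\big)$ is a subgroup which is a countable union of Zariski-closed subsets, hence contains an abelian subvariety of countable index, and then a Mumford--Roitman style argument (via symmetric products, or the universal regular homomorphism property of the Albanese) rules out a positive-dimensional abelian variety injecting, modulo countable kernel, into the Albanese kernel. None of this is recoverable from "torsion plus divisibility". If your intention was simply to cite the classical weak representability theorem for this implication, that is legitimate and matches what the paper does, but the sketch as written should not be presented as a proof outline of it.
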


\begin{proof} This was proved by Jannsen \cite[1.6]{Jannsen}.
  \end{proof}

\begin{theorem} \label{repsurface} Let $f : X \rightarrow S$ be a
  generically smooth and dominant morphism between smooth projective
  varieties defined over a field $k$. Assume that the general fibre
  $Y$ of $f_\Omega$ is such that $\CH_0(Y)_\alg$ is representable.
  Then $\CH_0(X)$ is supported in dimension $\dim S +1$. This means
  that there exists a smooth projective variety $T$ over $k$ of
  dimension $\dim S +1$ and a correspondence $\Gamma \in \CH^{\dim X}(T
  \times X)$ such that $(\Gamma_\Omega)_* : \CH_0(T_\Omega) \r
  \CH_0(X_\Omega)$ is surjective.
\end{theorem}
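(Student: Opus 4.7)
The plan is to take $T$ to be a general linear section of $X$ of dimension $\dim S + 1$ and to let $\Gamma := \Gamma_\sigma$ be the graph of the inclusion $\sigma : T \hookrightarrow X$, viewed as an element of $\CH^{\dim X}(T \times X)$. Since $\Gamma_*$ acts on zero-cycles as $\sigma_*$, it will suffice to prove that $\sigma_* : \CH_0(T_\Omega) \r \CH_0(X_\Omega)$ is surjective. Concretely, I would fix a projective embedding of $X$ and let $T \subset X$ be a complete intersection of $\dim X - \dim S - 1$ sufficiently general hyperplane sections; by Bertini, $T$ is smooth projective of dimension $\dim S + 1$ over $k$, and the composite $\pi := f \circ \sigma : T \r S$ is dominant with one-dimensional general fibres.

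Next, after shrinking $S$ to a dense open $U$ on which $f$ is smooth and on which the hypothesis of the theorem holds, every closed fibre $Y_u := f_\Omega^{-1}(u)$ with $u \in U_\Omega$ will be smooth with representable group of algebraically trivial zero-cycles. For $U$ small enough, a relative Bertini argument will guarantee that the fibre $T_u := \pi_\Omega^{-1}(u)$ is a smooth one-dimensional linear section of $Y_u$, so that Lemma \ref{rep} applies fibrewise to give surjectivity of $(\iota_u)_* : \CH_0(T_u) \r \CH_0(Y_u)$, where $\iota_u : T_u \hookrightarrow Y_u$ denotes the inclusion.

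The remainder of the argument would mimic the end of the proof of Theorem \ref{isogeny}: given a closed point $p$ of $X_\Omega$, Chow's moving lemma lets me replace $[p]$ by a rationally equivalent zero-cycle $\sum a_i [p_i]$ supported on $f_\Omega^{-1}(U_\Omega)$. Writing $u_i := f_\Omega(p_i)$, the fibrewise surjectivity above expresses $[p_i]$ as a $\Q$-combination of classes of points of $T_{u_i}$ in $\CH_0(Y_{u_i})$; pushing forward along the inclusion $Y_{u_i} \hookrightarrow X_\Omega$ and summing over $i$ shows that $[p] \in \sigma_* \CH_0(T_\Omega)$, which completes the proof.

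The main obstacle is the relative Bertini step: one must ensure that a general linear section of $X$ cuts each general fibre $Y_u$ in what behaves, for the purposes of Lemma \ref{rep}, as a general linear section of $Y_u$. This is handled by working at the generic point of $S$, applying Bertini over the function field $k(S)$ to the generic fibre of $f$, and then spreading out over a suitable open $U \subseteq S$; the generic smoothness hypothesis on $f$ is essential here, as without it one could not guarantee that $Y_u$ is smooth for general $u$ and hence could not invoke Lemma \ref{rep} on the fibre.
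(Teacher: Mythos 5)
Your proposal is correct and follows essentially the same route as the paper: take a smooth linear section $T$ of dimension $\dim S+1$ with $f|_T$ dominant and generically smooth, shrink $S$ to an open $U$ over which the fibres of $f$ and of $f|_T$ are smooth and the fibres of $f$ have representable $\CH_0(\cdot)_{\alg}$, and then move a point into $f^{-1}(U)$ and push it into $T$ fibre by fibre via Lemma \ref{rep}. The ``relative Bertini'' worry you flag is actually harmless, since part $3$ of Lemma \ref{rep} asserts surjectivity for \emph{any} smooth one-dimensional linear section of the fibre, so you only need $T_u = T\cap Y_u$ to be smooth for general $u$, which is exactly what generic smoothness of $f|_T$ provides.
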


\begin{proof} Let $\iota : H \r X$ be a smooth linear section of $X$
  of dimension $\dim S + 1$ such that $f$ restricted to $H$ is
  dominant and generically smooth.  Let $U$ be an open subset of
  $S_\Omega$ such that $f_U : X_U \r U$ is smooth, $f_U|_{H_U} : H_U
  \r U$ is smooth and such that the fibres of $f_U$ have representable
  Chow group of zero-cycles.

  Let's prove that $(\iota_\Omega)_* : \CH_0(H_\Omega) \r
  \CH_0(X_\Omega)$ is surjective. Let $p$ be a closed point of
  $X_\Omega$. By Chow's moving lemma, the zero-cycle $[p]$ is
  rationally equivalent to a cycle $\alpha = \sum a_i \cdot [p_i]$
  supported on $X_U$. Let $s_i := f(p_i) \in U$. Then, by choice of
  $U$, each cycle $[p_i]$ is rationally equivalent on $X_{s_i}$ to a
  cycle $\beta_i$ supported on $H_{s_i}$. Now clearly $\sum a_i \cdot
  \beta_i$ is in the image of $(\iota_\Omega)_* : \CH_0(H_\Omega) \r
  \CH_0(X_\Omega)$ and hence so is $[p]$.
\end{proof}

\begin{remark}
  The descent properties of Theorems \ref{isogeny} \&
  \ref{repsurface}, i.e. the fact that the correspondence $\Gamma$ in
  those theorems can be chosen to be defined over a field of
  definition of $f$, are essential to proving that $X$ has a
  Chow--K\"unneth decomposition defined over the field of definition of
  $f$ (Theorem \ref{theoremCK}) and to proving Proposition
  \ref{bielliptic-unram}.
\end{remark}

\begin{remark} Under the assumptions of the above theorem, it is not
  true that if $\CH_0(S)$ is supported in dimension, say $n$, then
  $\CH_0(X)$ is supported in dimension $n+1$.  Consider for example a
  Lefschetz fibration $S \r \P^1$ associated to a smooth projective
  surface $S$ with non-representable Chow group of zero-cycles.
\end{remark}

\begin{remark} \label{BB} Let $Y$ be a smooth projective variety over
  $k$ and let $H \r Y$ be a smooth linear section of dimension $n$. A
  consequence of the conjectures of Bloch and Beilinson is that, if
  $\CH_0(Y_\Omega)$ is supported in dimension $n$, then $\CH_0(H_\Omega)
  \r \CH_0(Y_\Omega)$ is surjective. Therefore, if one believes in the
  conjectures of Bloch and Beilinson, then the above theorem can be
  extended to the following.  Let $f : X \r S$ be a generically smooth
  and dominant morphism between smooth projective varieties defined
  over a field $k$. Let $n$ be a positive integer and assume that the
  general fibre $Y$ of $f$ is such that $\CH_0(Y)$ is supported in
  dimension $n$.  Then $\CH_0(X)$ is supported in dimension $\dim S +
  n$.
\end{remark}

\section{Effective motives with trivial Chow group of zero-cycles}

The following theorem was mentioned to me by Bruno Kahn. Its proof
uses, among other things, the technique of Bloch and Srinivas
\cite{BS} together with Theorem 2.4.1 of Kahn--Sujatha
\cite{KahnSujatha} where it is shown that a correspondence $\Gamma \in
\CH_d(X \times Y) = \Hom(\h(X),\h(Y))$ which vanishes in $\CH_d(U \times
Y)$ for some open subset $U \subset X$ factors through some motive
$\h(Z)(1)$ with $\dim Z = d -1$.

\begin{theorem}\label{effective}
  Let $M=(X,p)$ be an effective Chow motive such that $\CH_0(M_\Omega)
  = 0$. Then there exist a smooth projective variety $Y$ of
  dimension $\dim X -1$ and an idempotent $q \in \CH_{\dim Y}(Y \times
  Y)$ such that $(X,p,0) \simeq (Y,q,1)$.
\end{theorem}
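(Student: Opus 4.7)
The plan follows the hint given just before the statement: first a Bloch--Srinivas style argument locates the support of $p$, then Kahn--Sujatha's factorisation theorem yields a factorisation of $p$ through $\h(Y)(1)$ for some smooth projective $Y$ of dimension $\dim X - 1$, and finally a routine idempotent-completion upgrades that factorisation to the desired isomorphism of motives.

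The first step is to use the hypothesis $\CH_0(M_\Omega) = p_*\CH_0(X_\Omega) = 0$ to show that $p$ is supported on $D \times X$ for some divisor $D \subset X$ defined over $k$. For each $\Omega$-point $x$ of $X$, the zero-cycle $p_*[x]$ is precisely the restriction of $p$ to the fibre $\{x\} \times X_\Omega$ of the first projection, so the hypothesis says exactly that this restriction vanishes in $\CH_0(X_\Omega)$ for every such $x$. Specialising $x$ to a generic point---that is, choosing an embedding $k(X) \hookrightarrow \Omega$---and then descending via injectivity of base change of zero-cycles for extensions of algebraically closed fields, followed by a Galois trace argument (permissible because we work with $\Q$-coefficients), I deduce that the image of $p$ in $\CH_0(X_{k(X)})$ is zero. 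The localisation exact sequence for Chow groups on $X \times X$ (written as a colimit over nonempty open $U \subset X$ for the first factor) then supplies the required divisor $D$.

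With $D$ in hand, apply the theorem of Kahn--Sujatha recalled just above the statement to $p \in \CH_{\dim X}(X \times X) = \Hom(\h(X),\h(X))$, which by the previous step vanishes in $\CH_{\dim X}((X\setminus D) \times X)$: this yields a smooth projective variety $Y$ with $\dim Y = \dim X - 1$ together with morphisms $\alpha \colon \h(X) \r \h(Y)(1)$ and $\beta \colon \h(Y)(1) \r \h(X)$ satisfying $\beta \circ \alpha = p$. To upgrade this factorisation to an isomorphism $(X,p,0) \simeq (Y,q,1)$, set $\tilde\alpha := \alpha \circ p$, $\tilde\beta := p \circ \beta$ and $q := \tilde\alpha \circ \tilde\beta \in \CH_{\dim Y}(Y \times Y)$; the relations $\beta \circ \alpha = p$ and $p^2 = p$ formally yield $\tilde\beta \circ \tilde\alpha = p$ and $q^2 = q$, so that $(\tilde\alpha, \tilde\beta)$ realise the desired isomorphism of Chow motives.

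The main obstacle is the descent performed in the first step: one must convert the vanishing of $p$ on zero-cycles over the enormous field $\Omega$ into a vanishing over the function field $k(X)$, since this is what allows the localisation sequence to deliver a divisor $D$ defined over the base field $k$ and thereby keep the argument inside the category of Chow motives over $k$. Everything after that is a direct invocation of the Kahn--Sujatha black box together with purely formal manipulation of idempotents in an additive, idempotent-complete category.
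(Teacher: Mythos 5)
Your proposal is correct and follows essentially the same route as the paper: reduce the vanishing over $\Omega$ to vanishing of $p_*\eta_X$ over $k(X)$, use the localisation sequence to support $p$ on $D\times X$ for a divisor $D$, factor $p$ through $\h(Y)(1)$ with $\dim Y=\dim X-1$, and finish with the standard idempotent bookkeeping (your $q=\alpha\circ p\circ\beta$ is the paper's $q=g\circ p\circ f$). The only cosmetic difference is that you invoke Kahn--Sujatha 2.4.1 as a black box for the factorisation step, whereas the paper re-derives it inline via an alteration $Y\to D$ and \cite[16.1.1]{Fulton}.
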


\begin{proof} Without loss of generality, we can assume that $X$ is
  connected.  Since we are working with rational coefficients, the
  assumption $\CH_0(M_\Omega)=0$ implies $\CH_0(M_{k(X)})=0$ which means
  $p_*\CH_0(X_{k(X)}) = 0$. In particular, if $\eta_X$ denotes the
  generic point of $X$, then we have $p_*\eta_X=0$.  But $p_*\eta_X$
  is the restriction of $p \in \CH_d(X \times X)$ to $\varinjlim \CH_d(U
  \times X) = \CH_0(X_{k(X)})$, where the limit is taken over all open
  subsets $U$ of $X$. Therefore, by the localization exact sequence
  for Chow groups, there exist a proper closed subset $D \subset X$
  and a correspondence $\gamma \in \CH_d(D \times X)$ such that
  $\gamma$ maps to $p$ via the inclusion $D \times X \r X \times X$.
  Up to shrinking the open $U$ for which $p|_{U \times X}$ vanishes,
  we can assume that $D$ has pure dimension $d-1$. Let $ Y \r D$ be an
  alteration of $D$ and let $\sigma : Y \r D \hookrightarrow X$ be the
  composite morphism. The induced map $\CH_d(Y \times X) \r \CH_d(D
  \times X)$ is surjective and we have $p= (\sigma \times \id_X)_* f$,
  where $f \in \CH_d(Y \times X)$ is a lift of $\gamma$. Then, by
  \cite[16.1.1]{Fulton}, we have $(\sigma \times \id_X)_* f = f \circ
  {}^t\Gamma_\sigma$. This yields a factorisation $p = f \circ g$,
  where $f\in \CH_d(Y \times X)$ and $g = {}^t\Gamma_\sigma \in \CH^d(X
  \times Y)$.  Let's consider the correspondence $q:=g \circ f \circ g
  \circ f = g \circ p \circ f \in \CH_{d-1}(Y \times Y)$. It is
  straightforward to check that $q$ is an idempotent, and that $p
  \circ f \circ q \circ g \circ p = p$ as well as $q \circ g \circ p
  \circ f \circ q = q$.  These last two equalities exactly mean that
  $p \circ f \circ q$ seen as a morphism of Chow motives from
  $(Y,q,1)$ to $(X,p,0)$ is an isomorphism with inverse $q \circ g
  \circ p$.
\end{proof}

As noted by Sergey Gorchinskiy \cite{Gor}, this theorem admits the
following corollary

\begin{corollary} \label{effective-coro} Let $m$ and $n$ be positive
  integers.  Let $M=(X,p)$ be an effective Chow motive such that
  $\CH_i(M_\Omega) = 0$ for $i\leq n-1$ and $\CH_j(M_\Omega^\vee(\dim
  X)) = \CH_{j-\dim X}(M^\vee_\Omega) = 0$ for $j\leq m-1$. Then there
  exists a smooth projective variety $Y$ of dimension $\dim X - m - n$
  such that $M$ is isomorphic to a direct summand of $\h(Z)(n)$.
\end{corollary}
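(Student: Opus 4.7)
The plan is to apply Theorem \ref{effective} iteratively, first $n$ times to $M$ and then $m$ times to the dual of the resulting motive. The first chain absorbs $n$ Tate twists on the right, while the second shrinks the underlying variety by a further $m$ dimensions.

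For the first stage, set $(Z_0, q_0) := (X, p)$ and construct inductively smooth projective varieties $Z_k$ and idempotents $q_k \in \CH_{\dim Z_k}(Z_k \times Z_k)$ together with isomorphisms $M \cong (Z_k, q_k, k)$ for $0 \leq k \leq n$. From the formula $\CH_i(Z_k, q_k, k) = (q_k)_*\CH_{i-k}(Z_k)$, the vanishing hypothesis $\CH_k(M_\Omega) = 0$ for $k \leq n-1$ reads exactly as $\CH_0((Z_k, q_k)_\Omega) = 0$, so Theorem \ref{effective} applies at each step and produces $(Z_k, q_k) \cong (Z_{k+1}, q_{k+1}, 1)$ with $\dim Z_{k+1} = \dim Z_k - 1$. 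After $n$ steps I obtain $M \cong (Y, q, n)$ with $Y := Z_n$ of dimension $\dim X - n$ and $q \in \CH_{\dim Y}(Y \times Y)$ an idempotent.

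For the second stage I dualise. Combining the isomorphism above with the duality formula $(Y, q, n)^\vee = (Y, {}^tq, -\dim Y - n)$ and $\dim Y = \dim X - n$ gives $M^\vee(\dim X) \cong (Y, {}^tq, 0)$, which is effective. The remaining hypotheses say precisely that $\CH_i(M^\vee(\dim X)_\Omega) = 0$ for $i \leq m-1$, so the same iteration, now run on $(Y, {}^tq, 0)$ for $m$ steps, produces an isomorphism $(Y, {}^tq, 0) \cong (W, q', m)$ for some smooth projective $W$ of dimension $\dim Y - m = \dim X - n - m$ and some idempotent $q'$. Dualising back and undoing the Tate twist by $\dim X$ finally yields $M \cong (W, {}^tq', n)$, which is a direct summand of $\h(W)(n)$.

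The only real obstacle is bookkeeping: at every stage of each iteration one must check that the assumed vanishing still translates into the $\CH_0 = 0$ hypothesis of Theorem \ref{effective} for the intermediate motive about to be fed into it, and one must be careful with the Tate twists and dimensions when dualising the output of the second stage. Everything else is mechanical.
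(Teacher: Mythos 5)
Your proposal is correct and follows exactly the paper's own argument: apply Theorem \ref{effective} $n$ times to $M$, dualise to get $M^\vee(\dim X)\simeq (Y,{}^tq)$, apply the theorem $m$ more times, and dualise back. You in fact supply more of the bookkeeping (the inductive translation of $\CH_k(M_\Omega)=0$ into the $\CH_0=0$ hypothesis at each step, and the Tate-twist arithmetic in the final dualisation) than the paper does, but the route is the same.
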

\begin{proof}
  By Theorem \ref{effective} applied $n$ times, there is a smooth
  projective variety $Y$ of dimension $\dim X - n$ and an idempotent
  $q \in \CH_{\dim X -n}(Y\times Y)$ such that $M \simeq (Y,q,n)$. We
  then have by duality $M^\vee(\dim X) \simeq (Y,{}^t q)$. Applying
  $m$ times Theorem \ref{effective} gives a smooth projective variety
  $Z$ of dimension $\dim Y - m = \dim X - n - m$ such that
  $M^\vee(\dim X)$ is isomorphic to a direct summand of $\h(Z)(m)$.
  Therefore, after dualizing, we see that $M$ is isomorphic to a
  direct summand of $\h(Z)(n)$.
\end{proof}

\section{The Albanese motive and the Picard motive} \label{AP}

The results of the previous section show that it is convenient not
only to deal with smooth projective varieties but also with
idempotents. It may, however, be difficult to deal with idempotents
because these are usually not central. Here, we extend the construction
of Murre's Albanese projector to the case of Chow motives.\medskip

I thank Sergey Gorchinskiy \cite{Gor} for mentioning the following
basic lemma and the construction that ensues. The difference between
Lemma \ref{triangular} and Lemma \ref{linalg} is that Lemma
\ref{linalg} makes it possible to preserve self-duality when
orthonormalising a family of idempotents.

\begin{lemma}\label{triangular} Let $p$ be an idempotent endomorphism
  of an object $A \oplus B$ in a Karoubi closed additive category. Let
  $p_A$ denote the composition$$A \hookrightarrow A \oplus B
  \stackrel{p}{\longrightarrow} A \oplus B \r A$$ and similarly for
  $p_B$. Assume that $p$ is upper-triangular, that is, the composition
$$A \hookrightarrow A \oplus B \stackrel{p}{\longrightarrow}
A \oplus B \r B$$ vanishes. Then $p_A$ and $p_B$ are idempotents and
there is a canonical isomorphism
$$\im(p) \simeq
\im(p_A) \oplus \im(p_B).$$
\end{lemma}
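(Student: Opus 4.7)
The plan is to work in the Karoubi-closed category using matrix notation: write $p$ as a $2\times 2$ block matrix with respect to the decomposition $A \oplus B$, derive algebraic relations among its entries from $p^2=p$, and then conjugate $p$ by a carefully chosen automorphism of $A \oplus B$ to bring it into block-diagonal form. The isomorphism $\im(p) \simeq \im(p_A) \oplus \im(p_B)$ then follows because the image of an idempotent is preserved under conjugation by an isomorphism in any Karoubi-closed category.

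First, I would write $p = \bigl(\begin{smallmatrix} p_A & p_{AB} \\ 0 & p_B \end{smallmatrix}\bigr)$, where the vanishing lower-left entry encodes the upper-triangularity hypothesis and $p_{AB}\colon B \to A$ is the remaining off-diagonal entry. Squaring and comparing with $p$ yields the three identities $p_A^2 = p_A$, $p_B^2 = p_B$, and $p_A p_{AB} + p_{AB} p_B = p_{AB}$. The first two already give that $p_A$ and $p_B$ are idempotents, which is half of the claim. Left-multiplying the third identity by $p_A$ and using $p_A^2 = p_A$ extracts the key identity $p_A p_{AB} p_B = 0$.

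Next, I would look for an automorphism $u = \bigl(\begin{smallmatrix} 1 & c \\ 0 & 1 \end{smallmatrix}\bigr)$, with inverse $\bigl(\begin{smallmatrix} 1 & -c \\ 0 & 1 \end{smallmatrix}\bigr)$, conjugating $p$ into block-diagonal form. A direct matrix multiplication shows that $u p u^{-1}$ is block-diagonal precisely when $c$ satisfies $p_A c - c p_B = p_{AB}$. The candidate $c := p_A p_{AB} - p_{AB} p_B$ does the job: thanks to $p_A p_{AB} p_B = 0$ one computes $p_A c = p_A p_{AB}$ and $c p_B = -p_{AB} p_B$, so that $p_A c - c p_B = p_A p_{AB} + p_{AB} p_B = p_{AB}$.

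Producing this $c$ is the only step requiring any thought; the rest is formal. Once $u$ is in hand, conjugation yields $u p u^{-1} = \bigl(\begin{smallmatrix} p_A & 0 \\ 0 & p_B \end{smallmatrix}\bigr)$, and since images of idempotents are preserved under conjugation by isomorphisms in a Karoubi-closed category, we conclude $\im(p) \simeq \im(u p u^{-1}) = \im(p_A) \oplus \im(p_B)$, with the isomorphism canonical in $p_A, p_B, p_{AB}$.
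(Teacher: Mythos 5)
Your proof is correct. It does, however, package the argument differently from the paper, whose entire proof reads: ``It is immediate that $p_A$ and $p_B$ are projectors. The required isomorphism is given in the opposite direction by $p$.'' In other words, the paper directly exhibits the two mutually inverse morphisms --- the canonical map $\im(p) \to \im(p_A)\oplus\im(p_B)$ obtained by composing the inclusion into $A\oplus B$ with $p_A\oplus p_B$, and, in the opposite direction, the map $\im(p_A)\oplus\im(p_B) \to \im(p)$ induced by $p$ itself --- leaving to the reader the verification (via the same identities $p_A^2=p_A$, $p_B^2=p_B$, $p_Ap_{AB}+p_{AB}p_B=p_{AB}$, hence $p_Ap_{AB}p_B=0$) that these compose to the respective identities. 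Your route instead conjugates $p$ into block-diagonal form by the unipotent automorphism $u=\bigl(\begin{smallmatrix}1 & c\\ 0 & 1\end{smallmatrix}\bigr)$ with $c=p_Ap_{AB}-p_{AB}p_B$, and then invokes invariance of images under conjugation. What your approach buys is a fully explicit and easily checkable computation, plus the stronger structural statement that $p$ is actually \emph{conjugate} to $p_A\oplus p_B$ in $\End(A\oplus B)$; what the paper's buys is brevity and the immediate identification of the canonical maps realizing the isomorphism. The two in fact agree: since $p_Acp_B=0$, one checks that $(p_A\oplus p_B)\circ u\circ p=(p_A\oplus p_B)\circ p$, so your isomorphism coincides with the paper's canonical one. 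The only point worth flagging is that ``canonical'' in the statement refers to this latter map, so it is good practice (as you implicitly did) to note that the conjugation-induced isomorphism does not depend on any auxiliary choices.
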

\begin{proof} It is immediate that $p_A$ and $p_B$ are projectors. The
  required isomorphism is given in the opposite direction by $p$.
\end{proof}

Given a smooth projective variety $X$ of dimension $d \geq 2$,
consider the decomposition constructed by Murre \cite{Murre}:
$$ \hspace{2cm} \h(X) =
\mathds{1} \oplus \h_1(X) \oplus M \oplus \h_{2d-1}(X) \oplus
\mathds{1}(d). \hspace{2cm} (*)$$ Since $\Hom(\mathds{1}(d),\h(X)) =
\Hom(\mathds{1}(d),\mathds{1}(d))$, we obtain
$$\Hom(\mathds{1}(d),\mathds{1}) = \Hom(\mathds{1}(d), \h_1(X)) =
\Hom(\mathds{1}(d),M) = \Hom(\mathds{1}(d), \h_{2d-1}(X)) =
0.$$
For any curve $C$, we have
$$\Hom(\h(C)(d-1),\h(X)) = \mathrm{Pic}(C \times X).$$
This implies that
$$\Hom(\h_{2d-1}(X),M) = 0.$$
By duality, we conclude that there are no morphisms going from the
right to the left in the decomposition of $\h(X)$ as above. By Lemma
\ref{triangular} applied several times, for any idempotent
endomorphism $p$ of $\h(X)$, we have a decomposition
$$\im(p) \simeq \im(p_0) \oplus \im(p_1) \oplus \im(p_M) \oplus
\im(p_{2d-1}) \oplus \im(p_{2d}),$$ where $\im(p)$ is a direct summand
in $\h(X)$ and where each direct summand appearing in the
decomposition of $\im(p)$ above is a direct summand of the
corresponding direct summand appearing in the decomposition $(*)$ of
$\h(X)$. The following proposition is then straightforward.

\begin{proposition} \label{albpic} Let $(X,p)$ be a motive. The
  idempotents $p_0$, $p_1$, $p_{2d-1}$ and $p_{2d}$ constructed above
  enjoy the following properties :

  $\bullet$ $(X,p_0)$ is isomorphic to $\mathds{1}^{\oplus n}$ for
  some $n$ and $H_*(X,p_0) = H_0(X,p)$.

  $\bullet$ $(X,p_1)$ is isomorphic to a direct summand of the
  $\h_1(C)$ for some curve $C$ and  $H_*(X,p_1) = H_1(X,p)$.

  $\bullet$ $(X,p_{2d-1})$ is isomorphic to a direct summand of the
  $\h_1(C)(d-1)$ for some curve $C$ and $H_*(X,p_{2d-1}) =
  H_{2d-1}(X,p)$.

  $\bullet$ $(X,p_{2d})$ is isomorphic to $\mathds{1}({d})^{\oplus n}$
  for some $n$ and $H_*(X,p_{2d}) = H_{2d}(X,p)$.
\end{proposition}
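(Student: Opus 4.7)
My plan is to read off all four statements directly from the decomposition $\im(p) \simeq \im(p_0) \oplus \im(p_1) \oplus \im(p_M) \oplus \im(p_{2d-1}) \oplus \im(p_{2d})$ constructed just before the proposition, combined with the known homology and motivic nature of the summands of $(*)$. The proposition is essentially the translation of the vanishing of off-diagonal morphisms in $(*)$ into a statement about arbitrary direct summands of $\h(X)$, and no nontrivial calculation is required.

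For the homology claims, I would apply the homological realisation to the above direct-sum decomposition. The five summands of $(*)$ have homology respectively equal to $H_0(X)$, $H_1(X)$, $\bigoplus_{i=2}^{2d-2} H_i(X)$, $H_{2d-1}(X)$ and $H_{2d}(X)$, each concentrated in its proper degree; since $\im(p_i)$ is a direct summand of the $i$-th piece of $(*)$, its homology sits inside the corresponding summand. Extracting the $i$-th graded piece of $H_*(X,p) = \bigoplus_j H_*(X,p_j)$ for $i \in \{0,1,2d-1,2d\}$ then yields $H_i(X,p) = H_*(X,p_i)$, as claimed.

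For the motivic description, each $(X, p_i)$ is by construction a direct summand of the $i$-th block of $(*)$. For $(X,p_0)$ this says it is a direct summand of $\mathds{1}^{\oplus c}$, where $c$ is the number of connected components of $X$; since every idempotent in $\mathrm{End}(\mathds{1}^{\oplus c}) = M_c(\Q)$ is conjugate to a diagonal $\{0,1\}$-matrix, one has $(X,p_0) \simeq \mathds{1}^{\oplus n}$ for some $n \leq c$, and the case $p_{2d}$ is identical with $\mathds{1}(d)$ in place of $\mathds{1}$. For $p_1$, I would invoke the classical fact, going back to Murre, that $\h_1(X)$ is itself a direct summand of $\h_1(C)$ for some smooth projective curve $C$: such a $C$ can be produced via Bertini as a smooth linear section of dimension one whose Jacobian surjects onto $\mathrm{Alb}(X)$, and an isogeny splitting of this surjection yields the splitting of $\h_1(X)$. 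Since $(X,p_1)$ is in turn a direct summand of $\h_1(X)$, it is a direct summand of $\h_1(C)$. The statement for $p_{2d-1}$ then follows by dualising, using $\h_1(C)^\vee \simeq \h_1(C)(-1)$ and $\h_{2d-1}(X) \simeq \h_1(X)^\vee(d)$, to produce a direct summand of $\h_1(C)(d-1)$.

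The only place where external input is required is the fact that $\h_1(X)$ is a direct summand of $\h_1(C)$; everything else is a formal consequence of the block decomposition $(*)$ and elementary linear algebra on idempotents. I therefore expect this to be the main (and essentially only) step beyond pure bookkeeping.
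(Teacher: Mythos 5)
Your proposal is correct and follows exactly the route the paper intends: the paper declares the proposition ``straightforward'' from the block decomposition $\im(p) \simeq \im(p_0) \oplus \im(p_1) \oplus \im(p_M) \oplus \im(p_{2d-1}) \oplus \im(p_{2d})$ obtained from $(*)$ and Lemma \ref{triangular}, and your argument is precisely the bookkeeping that justifies that claim. The extra details you supply --- splitting idempotents in $\End(\mathds{1}^{\oplus c}) = M_c(\Q)$, the fact from Murre's construction that $\h_1(X)$ is a direct summand of $\h_1(C)$ for a curve $C$, and the duality $\h_{2d-1}(X) \simeq \h_1(X)^\vee(d)$ --- are exactly the inputs the paper leaves implicit.
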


\begin{definition}
  The idempotent $p_1$ is called the \emph{Albanese projector} and the
  idempotent $p_{2d-1}$ is called the \emph{Picard projector}.
\end{definition}

\begin{remark}
  The Albanese and Picard projectors are not unique.
\end{remark}

As an immediate corollary, we can extend Murre's theorem on surfaces
\cite{Murre} to direct summands of Chow motives of surfaces.

\begin{theorem} \label{CKSurface} Let $M = (S,p)$ be a Chow motive
  where $S$ is a smooth projective surface. Then $M$ has a Murre
  decomposition. If, moreover, $M$ is Kimura finite-dimensional
  \cite{Kimura}, then $M$ satisfies Murre's conjecture (C).
\end{theorem}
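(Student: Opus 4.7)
The plan is to extend Murre's theorem for smooth projective surfaces from $\h(S)$ to the motive $M=(S,p)$ by using Proposition \ref{albpic} to produce the Chow--K\"unneth decomposition, and to derive (C) in the Kimura finite-dimensional case from the standard unipotent-conjugation argument. Proposition \ref{albpic} yields mutually orthogonal idempotents $p_0, p_1, p_M, p_3, p_4$ summing to $p$ with $H_*(S,p_i,0) = H_i(S,p)$; setting $\pi_i := p_i$ (and $\pi_2 := p_M$) gives (A). For (B), the structural information in Proposition \ref{albpic} settles the vanishings coming from four of the five idempotents at once: since $(S,p_0)\simeq \mathds{1}^{\oplus n}$, $(S,p_4)\simeq \mathds{1}(2)^{\oplus n}$, $(S,p_1)$ is a direct summand of some $\h_1(C)$, and $(S,p_3)$ is a direct summand of some $\h_1(C)(1)$, their Chow groups are concentrated in dimensions $0,2,0,1$ respectively, yielding the required vanishings on $\CH_l(M)$. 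The only remaining condition is $(\pi_2)_*\CH_2(M)=0$, which follows from $(S,p_M)$ being a direct summand of Murre's middle motive of $\h(S)$ together with Murre's original statement (B) for surfaces.

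For (D), we compute $F^1\CH_l(M) = \ker(\pi_{2l})_*|_{\CH_l(M)}$ and identify it with $\CH_l(M)_\hom$. The decomposition $\CH_l(M) = \bigoplus_i (p_i)_*\CH_l(M)$ from the previous step separates $\CH_l(M)$ according to the homological degree picked out by each $p_i$; only the summand with $i=2l$ can contribute to $H_{2l}(M)$, so all the other summands---which together form $\ker(\pi_{2l})_*$---are homologically trivial. This gives $\ker(\pi_{2l})_* \subseteq \CH_l(M)_\hom$. The reverse inclusion reduces, via the embedding of $(S,p_{2l})$ into the corresponding summand of the Chow--K\"unneth decomposition of $\h(S)$, to Murre's original verification of (D) for the surface $S$.

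Assume now that $M$ is Kimura finite-dimensional. Then the two-sided ideal of homologically trivial endomorphisms of $M$ is nilpotent, by Kimura's theorem. A standard lifting argument produces, for any two Chow--K\"unneth decompositions $\{\pi_i\}$ and $\{\pi'_i\}$ of $M$, a nilpotent $N \in \mathrm{End}(M)$ and a unit $u = 1+N$ with $\pi'_i = u\pi_i u^{-1}$ for every $i$; since conjugation by $u$ preserves kernels of projectors, the filtrations $F^i_\pi\CH_l(M)$ and $F^i_{\pi'}\CH_l(M)$ coincide, which is (C). The main obstacle in this plan is the reverse inclusion in (D) at $l=1$, where, unlike the extremal cases handled purely by the structure of $(S,p_0),(S,p_1),(S,p_3),(S,p_4)$, the N\'eron--Severi piece of $\CH_1(M)$ is detected by the middle projector $p_M$ rather than by extremal idempotents; one must carefully restrict Murre's surface-level argument to the direct summand $M$ of $\h(S)$, tracking how the N\'eron--Severi group of $S$ interacts with the image of $p$.
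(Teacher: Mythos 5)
Your route is the paper's: Proposition \ref{albpic} supplies the decomposition, conjecture (B) follows from the structure of the extremal pieces, (D) is delicate only for one-cycles, and (C) comes from nilpotence of homologically trivial endomorphisms under finite-dimensionality. Two steps, however, do not go through as written. First, the idempotents produced by iterating Lemma \ref{triangular} are mutually orthogonal but do \emph{not} in general sum to $p$: each $p_i$ is the diagonal block $e_i\circ p\circ e_i$ of the triangular idempotent $p$, and $p$ may have nonzero off-diagonal components (for instance a component in $\Hom(\mathds{1},\h_2(S))=\CH_0(S,\pi_2^{S})$, which is an Albanese-kernel group and need not vanish; $p=e_0+e_M\circ\psi\circ e_0$ is then an idempotent whose diagonal part is $e_0\neq p$). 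So your $\pi_i$ need not add up to the identity of $M$, and conjecture (A) is not yet established. The paper repairs exactly this by keeping only the four extremal diagonal idempotents and setting $p_2:=p-\sum_{i\neq 2}p_i$. Second, in your argument for (C) the claim that conjugation by $u=1+N$ preserves kernels of projectors is false: $\ker(u\pi u^{-1})=u(\ker\pi)$, so you only obtain $F^i_{\pi'}\CH_l(M)=u\bigl(F^i_{\pi}\CH_l(M)\bigr)$ and must still show that $u$ preserves the filtration; that extra step is the actual content of the result the paper invokes, namely \cite[Proposition 3.1]{Vial2}, and it uses that $N$ is homologically trivial together with the already-established (B) and (D).

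On (D) you correctly isolate the one non-formal point, the inclusion $\CH_1(M)_\hom\subseteq\ker(\pi_2)$, but you leave it as an obstacle to be ``tracked carefully'' rather than closing it (and the relevant group is $\mathrm{Pic}^0_S$, not the N\'eron--Severi group). The paper's resolution is one line: since $(\pi_2)_*H_*(S)\subseteq H_2(S)$, the correspondence $\pi_2$ acts trivially on $H^1(S)=H_3(S)$ and hence on $\mathrm{Pic}^0_S\otimes\Q=\CH_1(S)_\hom\supseteq\CH_1(M)_\hom$. Equivalently, your proposed reduction to Murre's conjecture (D) for $S$ does work once one observes that the middle diagonal idempotent satisfies $p_M=\pi_2^{S}\circ p\circ\pi_2^{S}$, so that $(p_M)_*$ annihilates whatever $(\pi_2^{S})_*$ annihilates; but this should be said, and it must be reconciled with the complement $p-\sum_{i\neq2}p_i$ that you actually need as middle projector for (A).
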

\begin{proof}
  The correspondences $p_0$, $p_1$, $p_3$ and $p_4$ of Proposition
  \ref{albpic} together with $p_2 := p - \sum_{i \neq 2}p_i$ give a
  Chow--K\"unneth decomposition for $M$. That such a decomposition
  satisfies Murre's conjecture (B) is obvious.  Murre's conjecture (D)
  is possibly unclear only for one-cycles. Given a motive $N$, a
  correspondence $\gamma \in \Hom(\h(S),N)$ that acts trivially on
  $H^1(S)$ acts trivially on $\mathrm{Pic}^0_S = \CH^1(S)_\hom$. Since
  $p_2$ and $p_3$ are the only projectors that act possibly
  non-trivially on $\CH_1(S)$ and since $(p_3)_*\CH_1(M) \subseteq
  \CH_1(M)_\hom$, we get conjecture (D) for one-cycles on $M$, see also
  Proposition \ref{MurreC}. In the case when $M$ is Kimura
  finite-dimensional, Murre's conjecture (C) for $M$ can be obtained
  by applying \cite[Proposition 3.1]{Vial2}.
 \end{proof}

\section{Self-dual Chow--K\"unneth decompositions}
\label{CK}

Let $X$ be a smooth projective variety of dimension $d$ over $k$.  It
is proved in \cite[Theorem 4.2]{Vial3} that if the cohomology of $X$
in degree $\neq d$ is generated by the cohomology of curves, then $X$
admits a self-dual Chow--K\"unneth decomposition. Precisely, if
$H_i(X)=N^{\lfloor i/2 \rfloor}H_i(X)$ for all $i \neq d$, where $N$
is the coniveau filtration, then $X$ has a Chow--K\"unneth
decomposition.  It follows from Theorem \ref{isogeny}, together with a
decomposition of the diagonal argument \`a la Bloch--Srinivas, that a
fourfold which is fibred by rationally connected threefolds over a
curve has a self-dual Chow--K\"unneth decomposition \cite[Corollary
4.7]{Vial3}.  Del Angel and M\"uller-Stach \cite{dAMS} proved that
unirational threefolds have a Chow--K\"unneth decomposition. To do so,
they use Mori theory to reduce to the case of a conic fibration. In
this section, we generalise the aforementioned results by proving the
following:

\begin{theorem}\label{theoremCK}
  Let $f:X \r S$ be a dominant morphism defined over a field $k$ from
  a smooth projective variety $X$ to a smooth projective surface $S$
  such that the general fibre of $f_\Omega$ has trivial Chow group of
  zero-cycles. Suppose that $X$ has dimension $d \leq 4$. Then $X$
  has a self-dual Chow--K\"unneth decomposition $\{p_i\}_{0 \leq i
    \leq 2d}$.

  \noindent Moreover, this decomposition can be chosen so as to
  satisfy the following properties: \medskip

  $\bullet$ $p_{0}$ factors through a point $P_0$, i.e.  $(X,p_{0})$
  is isomorphic to $\h(P_0)$.

  $\bullet$ $p_{1}$ and $p_3$ factor through a curve, i.e.  there
  is a curve $C_0$ (resp. $C_1$) such that $(X,p_{1})$ (resp.
  $(X,p_3)$) is a direct summand of $\h_1(C_0)$ (resp.
  $\h_1(C_1)(1)$).

  $\bullet$ $p_{2}$ factors through a surface, i.e.  there is a
  surface $S_0$ such that $(X,p_{2})$ is isomorphic to a direct
  summand of $\h(S_0)$.

  $\bullet$ If $d\leq 4$, $p_4$ factors through a surface, i.e.  there
  is a surface $S_1$ such that $(X,p_{4})$ is isomorphic to a direct
  summand of $\h(S_1)(1)$.
\end{theorem}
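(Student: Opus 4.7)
My plan is to split $\h(X)$ into three orthogonal summands $\h(S) \oplus M \oplus \h(S)(d-2)$, where $M$ is a middle motive whose complexity is bounded by a variety of dimension $d-2 \leq 2$, and then to combine Murre decompositions of the three pieces.

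I would first apply Theorem~\ref{isogeny} to obtain a correspondence $\Gamma \in \CH^{\dim X}(S \times X)$ defined over $k$ with $f \circ \Gamma = \Delta_S$. Setting $P := \Gamma \circ f$ produces an idempotent in $\CH_d(X \times X)$ with $(X,P) \simeq \h(S)$, and dually $(X,{}^tP) \simeq \h(S)(d-2)$. Since $P$ and ${}^tP$ are generally not orthogonal, I would apply the non-commutative, self-duality-preserving Gram--Schmidt procedure of Lemma~\ref{linalg} to the pair $\{P,{}^tP\}$, obtaining mutually orthogonal idempotents $\tilde P$ and $\tilde P' = {}^t\tilde P$ still isomorphic to $\h(S)$ and $\h(S)(d-2)$ respectively. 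Then $\pi_M := \Delta_X - \tilde P - \tilde P'$ is a self-dual idempotent orthogonal to both $\tilde P$ and $\tilde P'$.

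I would next identify the middle motive $M := (X,\pi_M)$. For $d \geq 3$, the projector $\tilde P$ acts as the identity on $\CH_0(X_\Omega)$ (transferred from Theorem~\ref{isogeny} via $(X,\tilde P) \simeq \h(S)$), while $\tilde P'_* \CH_0(X_\Omega) = \CH_{2-d}(S_\Omega) = 0$. Hence $(\pi_M)_* \CH_0(X_\Omega) = 0$, and by the self-duality $M^\vee(d) = M$ the same vanishing holds for $M^\vee(d)$. Corollary~\ref{effective-coro} with $m=n=1$ then yields $M \simeq (Z,q,1)$ for some smooth projective $Z$ of dimension $d-2 \leq 2$ and idempotent $q$; since $(Z,q)$ is a summand of the motive of a surface (or lower-dimensional variety), it carries a self-dual Murre decomposition $q = q_0 + q_1 + q_2 + q_3 + q_4$ by Theorem~\ref{CKSurface}.

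Finally, I would assemble the decomposition. Taking a self-dual Murre decomposition $\{s_i\}$ of $\h(S)$ (Murre's theorem for surfaces) transfers by duality to a decomposition $\{{}^t s_{4-i}(d-2)\}$ of $\h(S)(d-2)$. Gathering the contributions that land in each cohomological degree $i$ of $X$ produces mutually orthogonal self-dual idempotents $p_0,\ldots,p_{2d}$ summing to $\Delta_X$: explicitly $p_0 = s_0$, $p_1 = s_1$, $p_2 = s_2 + q_0(1)$, $p_3 = s_3 + q_1(1)$, and so on, with the highest pieces being the transposes of the lowest. The factoring claims for $p_0,\ldots,p_4$ are then read off from Proposition~\ref{albpic} applied to the summands of surface (and curve) motives, taking disjoint unions of surfaces (resp.\ curves) of equal dimension when several pieces must be combined. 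The main obstacle is the first step: producing a genuinely self-dual decomposition requires orthogonalising $P$ and ${}^tP$ without breaking the symmetry $P \leftrightarrow {}^tP$, which is the precise purpose of Lemma~\ref{linalg}; a secondary subtlety, which pins down the bound $d \leq 4$, is that only in this range does Corollary~\ref{effective-coro} reduce the middle motive to a Tate twist of a summand of the motive of a variety of dimension at most two.
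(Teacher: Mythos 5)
Your overall strategy --- split off a copy of $\h(S)$ and its dual $\h(S)(d-2)$ and reduce the middle motive to a twisted summand of the motive of a variety of dimension $d-2$ --- is essentially the route the paper takes in \S\ref{firstproj}--\S\ref{dimsup3} (there the author splits off the pieces $\pi_0,\pi_1,\pi_2^{tr}$ of $\h(S)$ rather than all of $P=\Gamma\circ\Gamma_f$ at once, but the effect is the same). The genuine gap is in your final assembly step: self-duality. You realise the middle motive as $M=(X,\pi_M)$ with ${}^t\pi_M=\pi_M$ together with an isomorphism $u:(Z,q,1)\to M$ with inverse $v$, and you transport a Murre decomposition $q=q_0+\cdots+q_4$ of $(Z,q)$ to idempotents $\rho_j:=u\circ q_j\circ v$ refining $\pi_M$. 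But Theorem \ref{CKSurface} does not provide a \emph{self-dual} decomposition of $(Z,q)$ (for a non-self-dual $q$ one cannot even ask for ${}^tq_j=q_{4-j}$, since ${}^tq_j$ is a summand of ${}^tq$, not of $q$), and even if it did, transposition on $\CH_d(X\times X)$ does not commute with conjugation by $u$, so there is no reason why ${}^t\rho_j=\rho_{4-j}$. Consequently your $p_2=\tilde s_2+\rho_0$ and $p_6={}^t\tilde s_2+\rho_4$ need not satisfy $p_6={}^tp_2$, and your $p_4$ need not equal its own transpose. This is precisely the obstruction the paper flags at the end of \S\ref{dimsup3} (``it is not clear that $(Z,q,1)$ is self-dual'') and which forces the longer construction of \S\ref{proj3}--\S\ref{orthoproj}: one extracts only the low-degree projectors $\pi_2^{alg}$ and $\pi_3$ from the complement (via the point and Albanese projectors of Proposition \ref{albpic} applied to a motive $(Y,q,1)$ with $\dim Y=d-1$), \emph{defines} $\pi_{2d-2}$ and $\pi_{2d-3}$ as their transposes, verifies by hand the one-sided orthogonality relations $\pi_i\circ\pi_j=0$ for $i<j$ (this uses Lemmas \ref{1way}, \ref{trivialaction} and \ref{2way} via Proposition \ref{pi2vanish}, and is where most of the work lies), runs the Gram--Schmidt process of Theorem \ref{GS}, and only then defines the exact middle projector as the complement $\Delta_X-\sum_{i\neq d}p_i$, which is self-dual for free; its factorisation through a surface is recovered a posteriori from Proposition \ref{vanishingchow2} and Corollary \ref{effective-coro}.

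Two smaller points. First, Lemma \ref{linalg} requires one-sided orthogonality as input, so before running it on $\{P,{}^tP\}$ you must check $P\circ{}^tP=0$; this does hold, by Lemma \ref{1way} ($\Gamma_f\circ{}^t\Gamma_f=0$ because $\dim X>\dim S$), but it is a step to be recorded, not a formality. Second, the identity of correspondences $\Gamma_f\circ\Gamma=\Delta_S$ comes from Proposition \ref{surjective} together with the explicit choice $\Gamma=\frac{1}{n}\Gamma_\sigma\circ{}^t\Gamma_\pi$, not from Theorem \ref{isogeny}, which only asserts bijectivity on $\CH_0$.
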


\noindent In particular, this will give an alternate proof to del
Angel and M\"uller-Stach's result for conic fibrations over a surface.

\noindent We divide the proof into several steps.

\subsection{The projectors $\pi_0$, $\pi_1$ and $\pi_2^{tr}$}
\label{firstproj}

The surface $S$ has a Chow--K\"unneth decomposition \cite{Murre,
  Scholl} $\{\pi_0^S, \pi_1^S, \pi_2^S, {}^t\pi_1^S, {}^t\pi_0^S\}$.
The motive $(S, \pi_2^S)$ admits a direct summand $(S, \pi_2^{tr,S})$
called its transcendental part, cf \cite{KMP}. The action of the
idempotent $\pi_2^{tr,S}$ on the homology of $S$ is the orthogonal
projector on the orthogonal complement for cup-product of the span of
the classes of algebraic one-cycles inside $H_2(S)$. In characteristic
zero, for Betti cohomology, $(\pi_2^{tr,S})_*H_*(S)$ is the sub-Hodge
structure of $H_2(S)$ generated by $H^{2,0}(S)=H^2(S,O_S)$ thanks to
the Lefschetz $(1,1)$-theorem. The idempotent $\pi_2^{tr,S}$ acts
trivially on $\CH_1(S_\Omega)$ and on $\CH_2(S_\Omega)$ so that
$\CH_*(S,\pi_2^{tr,S}) = \CH_0(S,\pi_2^{tr,S})$.  \medskip

By Proposition \ref{surjective}, there is a correspondence $\Gamma \in
\CH^{d}(S \times X)$ such that $\Gamma_f \circ \Gamma = \Delta_S$, so
that the Chow motive of $S$ is a direct summand of the Chow motive of
$X$. We thus define mutually orthogonal idempotents $\pi_0 := \Gamma
\circ \pi_0^S \circ \Gamma_f$, $\pi_1 := \Gamma \circ \pi_1^S\circ
\Gamma_f$ and $\pi_2^{tr} := \Gamma \circ \pi_2^{tr, S} \circ
\Gamma_f$. Because the idempotents $\pi_0^S$, $\pi_1^S$ and
$\pi_2^{tr, S}$ in $\CH_2(S \times S)$ satisfy $(\pi_0^S)_*H_*(S) =
H_0(S)$, $(\pi_1^S)_*H_*(S) = H_1(S)$ and $(\pi_2^{tr,S})_*H_*(S)
\subset H_2(S)$, we see that $(\pi_0)_*H_*(X) \subset H_0(X)$,
$(\pi_1)_*H_*(X) \subset H_1(X)$ and $(\pi_2^{tr})_*H_*(X) \subset
H_2(X)$.\medskip

Since $\CH_0(S_\Omega) = (\pi_0^S + \pi_1^S + \pi_2^{tr,
  S})_*\CH_0(S_\Omega)$ and since both $(\Gamma_f)_* : \CH_0(X_\Omega)
\r \CH_0(S_\Omega)$ and $\Gamma_* : \CH_0(S_\Omega) \r \CH_0(X_\Omega)$
are isomorphisms by Theorem \ref{isogeny}, we get

\begin{proposition} \label{vanishingchowS} $(\pi_0 + \pi_1 +
  \pi_2^{tr})_*\CH_0(X_\Omega) = \CH_0(X_\Omega)$. \qed
\end{proposition}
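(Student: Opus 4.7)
The plan is to reduce the claim on $X$ to the analogous identity on the surface $S$, using the fact that $\Gamma_*$ is a genuine inverse to $(\Gamma_f)_*$ on $\CH_0$.

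First I would record that on the surface side one has
$$\CH_0(S_\Omega) \;=\; (\pi_0^S + \pi_1^S + \pi_2^{tr,S})_*\CH_0(S_\Omega).$$
Indeed, the Chow--K\"unneth decomposition of $S$ gives $\Delta_S = \pi_0^S + \pi_1^S + \pi_2^S + {}^t\pi_1^S + {}^t\pi_0^S$. The two transposed projectors factor through motives of the form $\h_1(C)(1)$ and $\mathds{1}(2)$, which have vanishing Chow group of zero-cycles, and therefore act as zero on $\CH_0(S_\Omega)$. Moreover, by \cite{KMP} the ``algebraic part'' $\pi_2^S - \pi_2^{tr,S}$ cuts out a direct summand isomorphic to a sum of Lefschetz motives $\mathds{1}(1)^{\oplus n}$, whose $\CH_0$ also vanishes. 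Hence only the three pieces $\pi_0^S$, $\pi_1^S$ and $\pi_2^{tr,S}$ contribute to $\CH_0(S_\Omega)$.

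Next I would transport this identity back to $X$ via $\Gamma$ and $\Gamma_f$. By Theorem~\ref{isogeny} the map $(\Gamma_f)_* \colon \CH_0(X_\Omega) \to \CH_0(S_\Omega)$ is bijective with inverse $\Gamma_*$; in particular $\Gamma_* \circ (\Gamma_f)_* = \id_{\CH_0(X_\Omega)}$. Given $\alpha \in \CH_0(X_\Omega)$, unfolding the definitions gives
$$(\pi_0 + \pi_1 + \pi_2^{tr})_*\alpha \;=\; \Gamma_*\bigl((\pi_0^S + \pi_1^S + \pi_2^{tr,S})_*((\Gamma_f)_*\alpha)\bigr) \;=\; \Gamma_*((\Gamma_f)_*\alpha) \;=\; \alpha,$$
where the middle equality uses the identity established in the previous step applied to $(\Gamma_f)_*\alpha \in \CH_0(S_\Omega)$. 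This proves the surjectivity (in fact the identity) claimed in the proposition.

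The computation is essentially formal once one has the two inputs, so there is no real obstacle; the only point worth verifying carefully is the vanishing of $\CH_0$ of $(S,\pi_2^{alg,S})$, which in turn rests on the identification of this summand with $\mathds{1}(1)^{\oplus n}$ coming from the Lefschetz $(1,1)$-theorem (or, over a general field, from the construction in \cite{KMP}).
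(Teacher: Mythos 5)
Your proof is correct and follows essentially the same route as the paper: the paper derives the proposition in one line from the identity $\CH_0(S_\Omega) = (\pi_0^S + \pi_1^S + \pi_2^{tr,S})_*\CH_0(S_\Omega)$ together with the fact that $(\Gamma_f)_*$ and $\Gamma_*$ are mutually inverse bijections on $\CH_0$ (Theorem \ref{isogeny} and $\Gamma_f\circ\Gamma=\Delta_S$). Your first paragraph merely makes explicit the surface-level input that the paper leaves implicit, namely that $\pi_2^{alg,S}$, ${}^t\pi_1^S$ and ${}^t\pi_0^S$ annihilate $\CH_0(S_\Omega)$.
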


This yields that the decomposition $\h(X) = (X,\pi_0) \oplus (X,\pi_1)
\oplus (X,\pi_2^{tr}) \oplus M$ satisfies $\CH_0(M_\Omega)=0$. Theorem
\ref{effective} gives a smooth projective variety $Y$ of dimension one
less than the dimension $d$ of $X$ together with an idempotent $q \in
\CH_{d-1}(Y \times Y)$ such that $M \simeq (Y,q,1)$.\medskip

By definition $H_*(Y,q,1) = H_{*-2}(Y,q) = q_*H_{*-2}(Y)$.
Consequently, we see that $H_0(M)=0$ and also that $H_1(M)=0$.
Therefore, $(\pi_0)_*H_*(X) = H_0(X)$ and $(\pi_1)_*H_*(X)= H_1(X)$.
\medskip

It is interesting to note that we can show that the $\pi_i$'s act as
the K\"unneth projectors on homology only after having determined
their action on Chow groups.

\subsection{Chow--K\"unneth decomposition for $\dim X=3$}
\label{CK3}

When $\dim X=3$, the Chow motive $\h(X)$ decomposes as $(X,\pi_0)
\oplus (X,\pi_1) \oplus (X,\pi_2^{tr}) \oplus M$ where $M$ is
isomorphic to a motive $(Y,q,1)$ with $\dim Y = 2$. In other words,
$\h(X)$ is isomorphic to a direct sum of direct summands of twisted
motives of surfaces. Theorem \ref{CKSurface} then says that $X$ has a
Murre decomposition.  This is made more precise in \S\ref{dim3}.

\subsection{A first approach to splitting the motive of $X$ when $\dim
  X \geq 3$} \label{dimsup3}

Ultimately, our goal is to define a self-dual Chow--K\"unneth
decomposition for $X$ with $\dim X \leq 4$. Let's thus study the
orthogonality relations between the idempotents $\pi_0$, $\pi_1$,
$\pi^{tr}_2$ and their transpose ${}^t\pi_0$, ${}^t\pi_1$,
${}^t\pi^{tr}_2$. We already know that $\pi_0$, $\pi_1$ and
$\pi^{tr}_2$ are mutually orthogonal. For dimension reasons (see also
\S\ref{orthoproj}), the only possible missing orthogonality relations
concern $\pi^{tr}_2 \circ {}^t\pi^{tr}_2$ and ${}^t\pi^{tr}_2 \circ
\pi^{tr}_2$. The crucial point here is that $\pi^{tr}_2 \circ
{}^t\pi^{tr}_2 = 0$ and that ${}^t\pi^{tr}_2 \circ \pi^{tr}_2$ acts
trivially on $\CH_*(X_\Omega)$. Let's prove these facts.\medskip

Proposition \ref{pi2vanish} gives two proofs that $\pi^{tr}_2 \circ
{}^t\pi^{tr}_2 = 0$. The first proof relies on Lemma \ref{1way} and is
particular to our geometric situation. The second proof relies on
Lemma \ref{2way}; it is more general and could be useful in other
situations.\medskip

On the one hand, we have

\begin{lemma} \label{1way} Let $f : X \r S$ be a dominant map between
  two smooth projective varieties with $\dim X > \dim S$. Then
  $\Gamma_f \circ {}^t\Gamma_f = 0$.
\end{lemma}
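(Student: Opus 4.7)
My plan is to compute the composition $\Gamma_f \circ {}^t\Gamma_f$ directly as a cycle class on $S \times S$ by unwinding the intersection-theoretic definition of composition of correspondences.

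First I would observe that ${}^t\Gamma_f \subset S \times X$ and $\Gamma_f \subset X \times S$ are both honest graphs: they are the images of the closed immersions $(f,\mathrm{id}_X)\colon X \hookrightarrow S\times X$ and $(\mathrm{id}_X,f)\colon X \hookrightarrow X\times S$ respectively. Consequently, the scheme-theoretic intersection
\[
p_{12}^{-1}({}^t\Gamma_f) \cap p_{23}^{-1}(\Gamma_f) \subset S\times X \times S
\]
is exactly the image of $X$ under the closed immersion $x\mapsto (f(x),x,f(x))$, and this intersection is transverse (both pullbacks are regular embeddings of the correct codimension, so dimensions add up correctly). Hence no excess intersection terms appear.

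Next I would apply the projection $p_{13}\colon S\times X\times S\to S\times S$ to this intersection. The composition $p_{13}\circ (f,\mathrm{id},f)\colon X\to S\times S$ is simply $x\mapsto (f(x),f(x))$, which factors through the diagonal $\Delta_S\subset S\times S$. Therefore the cycle $\Gamma_f\circ {}^t\Gamma_f$ equals the pushforward of $[X]$ along a morphism whose image is contained in $\Delta_S \cong S$, a subvariety of $S\times S$ of dimension $\dim S$. Since $\dim X > \dim S$, the image has strictly smaller dimension than the source, so the pushforward of $[X]$ vanishes, giving $\Gamma_f\circ {}^t\Gamma_f = 0$.

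The main (and only) subtle point is verifying the transversality/regularity of the intersection in $S\times X\times S$; once that is in hand, the dimension-drop argument for pushforward is formal. A fallback approach, should transversality prove awkward, is to invoke Manin's identity principle: for any smooth projective $T$ and $\alpha\in\CH_*(T\times S)$ one checks via the projection formula that $(\Gamma_f\circ{}^t\Gamma_f)_*\alpha = (\mathrm{id}_T\times f)_*(\mathrm{id}_T\times f)^*\alpha$ involves the factor $f_*[X]\in \CH_{\dim X}(S) = 0$, and hence vanishes. Either route completes the proof.
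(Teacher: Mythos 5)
Your proof is correct and follows essentially the same route as the paper: identify the intersection in $S \times X \times S$ as the locus $\{(f(x),x,f(x))\}$ of dimension $\dim X$, note that it is proper so no excess terms arise, and conclude by proper pushforward along $p_{13}$ since the image has dimension $\dim S < \dim X$. (Note that full transversality is not needed for this last step -- proper intersection already guarantees the product is supported on that locus, and the dimension-drop argument is insensitive to multiplicities.)
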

\begin{proof}
  By definition, we have $\Gamma_f \circ {}^t\Gamma_f =
  (p_{1,3})_*(p_{1,2}^*{}^t\Gamma_f \cap p_{2,3}^*\Gamma_f)$, where
  $p_{i,j}$ denotes projection from $S \times X \times S$ to the
  $(i,j)$-th factor. These projections are flat morphisms, therefore,
  by flat pullback, we have $p_{1,2}^*{}^t\Gamma_f = [{}^t\Gamma_f
  \times S]$ and $p_{2,3}^*\Gamma_f = [S \times \Gamma_f]$. It is easy
  to see that the closed subschemes ${}^t\Gamma_f \times S$ and $S
  \times \Gamma_f$ of $S \times X \times S$ intersect properly. Their
  intersection is given by $\{(f(x),x,f(x)) : x \in X\} \subset S
  \times X \times S$. This is a closed subset of dimension $\dim X$
  and its image under the projection $p_{1,3}$ has dimension $\dim S$,
  which is strictly less than $\dim X$ by assumption. The projection
  $p_{1,3}$ is a proper map and hence, by proper pushforward, we get
  that $(p_{1,3})_* [\{(f(x),x,f(x)) \in S \times X \times S : x \in X
  \}] =0$.
\end{proof}

On the other hand, we have the following two lemmas, the first of
which will be used later on.

  \begin{lemma} \label{trivialaction} Let $\gamma \in \CH^0(V \times
     W)$ be a correspondence such that $\gamma_*$ acts trivially on
     zero-cycles. Then $\gamma = 0$.
\end{lemma}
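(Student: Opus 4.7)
The plan is to compute $\gamma_*$ directly on the classes of closed points and exploit the very rigid structure of $\CH^0(V \times W)$. By definition, $\CH^0$ is freely generated over $\Q$ by the classes of the irreducible (equivalently connected) components, so if we write the decompositions $V = \sqcup_i V_i$ and $W = \sqcup_j W_j$ into irreducible components, then every $\gamma \in \CH^0(V \times W)$ has a unique expression $\gamma = \sum_{i,j} c_{ij}[V_i \times W_j]$ with $c_{ij} \in \Q$. The aim is to show that the assumption forces all the $c_{ij}$ to vanish.

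For each index $i$, I would pick a closed point $v \in V_i$; such a point exists because $V$ is a non-empty smooth projective $k$-variety. The pushforward along the correspondence $\gamma$ is given by $\gamma_*[v] = (p_W)_*\bigl(p_V^*[v]\cdot \gamma\bigr)$, where $p_V,p_W$ are the two projections from $V \times W$. Flat pullback yields $p_V^*[v] = [\{v\}\times W]$ (as a cycle with a multiplicity depending only on the residue field degree $[k(v):k]$), the intersection $[\{v\}\times W]\cdot [V_{i'}\times W_j]$ is $[\{v\}\times W_j]$ when $i' = i$ and $0$ otherwise (these meet transversally along a component of their intersection), and finally the proper pushforward $(p_W)_*[\{v\}\times W_j] = [W_j]$. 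Putting the pieces together gives
\[
\gamma_*[v] \;=\; [k(v):k] \sum_j c_{ij}\,[W_j] \quad \in\ \CH_*(W).
\]

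The hypothesis that $\gamma_*$ acts trivially on zero-cycles forces this expression to be zero. Since the classes of the distinct irreducible components $[W_j]$ are $\Q$-linearly independent in $\CH_*(W)$ (each lives in a summand indexed by its own component) and $[k(v):k] \neq 0$, we conclude $c_{ij} = 0$ for every $j$. Running this argument over all $i$ kills every coefficient, so $\gamma = 0$.

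The proof is essentially a bookkeeping exercise, so no step is a serious obstacle; the only point to be a little careful about is the interpretation of ``acts trivially on zero-cycles'', which here means that $\gamma_*$ annihilates $\CH_0(V)$ even though its image does not land in $\CH_0(W)$ (indeed it lands in $\CH_{\dim W_j}(W)$ component by component). Once that is settled, the linear independence of the components of $W$ inside $\CH_*(W)$ does all the work.
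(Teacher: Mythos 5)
Your proof is correct and is essentially the paper's own argument: the paper first reduces to $V$, $W$ connected, writes $\gamma = a\cdot[V\times W]$, and evaluates $\gamma_*$ on a zero-cycle $z$ to get $a\cdot\deg z\cdot[W]=0$, hence $a=0$. Your version just carries out the same point-evaluation computation component by component instead of reducing to the connected case first.
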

\begin{proof} We can assume that $V$ and $W$ are both connected.  The
  cycle $\gamma$ is equal to $a\cdot [V \times W]$ for some $a \in
  \Q$. Let $z$ be a zero-cycle on $V$. Then $\gamma_*z = a \cdot \deg
  z \cdot [W]$. This immediately implies $a=0$.
\end{proof}

\begin{lemma} \label{2way} Let $\gamma \in \CH^1(V \times W)$ be a
  correspondence such that both $\gamma_*$ and $\gamma^*$ act
  trivially on zero-cycles after base-change to an algebraically
  closed field over $k$. Then $\gamma = 0$.
\end{lemma}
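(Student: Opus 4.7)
My plan is to apply a see-saw argument using the classical description of the Picard group of a product. After base-changing to an algebraically closed field $K/k$ on which the hypothesis is known to hold, I would identify $\CH^1(V_K \times W_K)_\Q$ with $\mathrm{Pic}(V_K \times W_K)_\Q$ and invoke the canonical decomposition
\[
\mathrm{Pic}(V_K \times W_K)_\Q \cong \mathrm{Pic}(V_K)_\Q \oplus \mathrm{Pic}(W_K)_\Q \oplus \mathrm{Hom}(\mathrm{Alb}(V_K), \mathrm{Pic}^0(W_K))_\Q.
\]
Write $\gamma_K = p_V^*\alpha + p_W^*\beta + \delta$ accordingly, with $\delta$ normalised so that $\delta|_{\{p_0\} \times W_K} = 0$ for a fixed base point $p_0 \in V_K(K)$; the class $\delta$ then corresponds to a homomorphism $\phi : \mathrm{Alb}(V_K) \r \mathrm{Pic}^0(W_K)$ satisfying $\delta|_{\{p\} \times W_K} = \phi(\mathrm{alb}_V(p))$ for every closed point $p$.

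A direct computation gives $\gamma_*[p] = \gamma_K|_{\{p\} \times W_K} = \beta + \phi(\mathrm{alb}_V(p))$ in $\mathrm{Pic}(W_K)_\Q$ for every closed point $p \in V_K$. The hypothesis that $\gamma_*$ acts trivially on zero-cycles yields $\beta = 0$ (by taking $p = p_0$) and then $\phi = 0$ (since closed points of $V_K$ over the algebraically closed field $K$ generate $\mathrm{Alb}(V_K)(K)$). Hence $\delta = 0$ and $\gamma_K = p_V^*\alpha$. The symmetric hypothesis on $\gamma^*$ then gives $\gamma^*[q] = \gamma_K|_{V_K \times \{q\}} = \alpha$ for every closed point $q \in W_K$, so $\alpha = 0$ and $\gamma_K = 0$. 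Injectivity of the base-change map $\CH^1(V \times W)_\Q \hookrightarrow \CH^1(V_K \times W_K)_\Q$ on divisor classes (standard descent for $\mathrm{Pic}$ of smooth projective varieties) finally yields $\gamma = 0$.

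The main obstacle will be setting up the Picard decomposition of the product cleanly, in particular the compatibility of its three summands with restriction to fibres and the normalisation of the correspondence part $\delta$. This is classical material going back to Weil, Rosenlicht, and Mumford, but needs to be handled carefully to ensure that vanishing of $\gamma_K|_{\{p\} \times W_K}$ for every $p$ actually forces both $\beta = 0$ and $\phi = 0$, and not merely some torsion-related constraints arising from the $\Q$-coefficients. A purely see-saw-flavoured alternative, bypassing the explicit decomposition, would go: fibrewise $\Q$-triviality of $\gamma_K$ along $p_V$ directly forces $\gamma_K \in p_V^*\mathrm{Pic}(V_K)_\Q$, and then the symmetric hypothesis eliminates the surviving $\alpha$; this works but still requires an analysis of the $\mathrm{Pic}^0$ contribution, so the decomposition route seems cleaner.
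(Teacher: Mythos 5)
Your proposal is correct and follows essentially the same route as the paper: decompose $\mathrm{Pic}(V\times W)_\Q$ into $\mathrm{Pic}(V)_\Q\oplus\mathrm{Pic}(W)_\Q\oplus\mathrm{Hom}(\mathrm{Alb}_V,\mathrm{Pic}^0_W)\otimes\Q$, kill the $\mathrm{Hom}$ component using surjectivity of the Albanese map over the algebraically closed field, and then kill the two pulled-back divisor components by the degree computation on zero-cycles, finishing with injectivity of base change on rational Chow groups. The only cosmetic difference is that you make the fibrewise restriction/see-saw computation explicit where the paper argues directly with the action of $\gamma_*$ on $\CH_0(V)_{\hom}$.
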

\begin{proof} Since base-change to a field extension induces an
  injective map on Chow groups with rational coefficients, we may
  assume that the base field $k$ is algebraically closed. We may also
  assume that $V$ and $W$ are connected. We have
  $$\mathrm{Pic}(V \times W) = \mathrm{Pic}(V) \times [W] \oplus [V]
  \times \mathrm{Pic}( W) \oplus
  \mathrm{Hom}(\mathrm{Alb}_V,\mathrm{Pic}^0_W)\otimes \Q.$$ Let
  $\varphi \in \mathrm{Hom}(\mathrm{Alb}_V,\mathrm{Pic}^0_W)\otimes
  \Q$ be the component of $\gamma$ under the above decomposition. By
  assumption, $\gamma_*$ acts trivially on $\CH_0(V)$ and, hence, also
  on $\CH_0(V)_\hom$. The albanese map $\CH_0(V)_\hom \r
  \mathrm{Alb}_V(k)$ is surjective and it follows that $\varphi=0$.
  The cycle $\gamma$ is thus equal to $D_1 \times [W] + [V] \times
  D_2$ for some divisors $D_1 \in \CH^1(V)$ and $D_2 \in \CH^1(W)$. Let
  $z$ be a zero-cycle on $V$.  Then $\gamma_*z = \deg z \cdot D_2$.
  This immediately implies that $D_2=0$. Likewise, if $z \in \CH_0(W)$,
  then $\gamma^* z =0$ implies $D_1=0$. We have thus proved that
  $\gamma=0$.
\end{proof}

\begin{proposition} \label{pi2vanish} $\pi_2^{tr} \circ {}^t\pi_2^{tr}
  =0$. More generally, $\Hom(
  (X,{}^t\pi_2^{tr}),(X,\pi_2^{tr}))=0$.
 \end{proposition}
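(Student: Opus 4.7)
The plan is to factor $\pi_2^{tr} = \Gamma \circ \pi_2^{tr,S} \circ \Gamma_f$ through the surface $S$ and reduce everything to a computation on $S$. Any morphism in $\Hom((X,{}^t\pi_2^{tr}),(X,\pi_2^{tr}))$ is a cycle of the form $\pi_2^{tr} \circ \delta \circ {}^t\pi_2^{tr}$ for some $\delta \in \CH_d(X \times X)$, and substituting the factorisation yields
$$\pi_2^{tr} \circ \delta \circ {}^t\pi_2^{tr} \;=\; \Gamma \circ \bigl(\pi_2^{tr,S} \circ \Gamma_f \circ \delta \circ {}^t\Gamma_f \circ {}^t\pi_2^{tr,S}\bigr) \circ {}^t\Gamma.$$
It therefore suffices to prove that $\gamma' := \pi_2^{tr,S} \circ \gamma \circ {}^t\pi_2^{tr,S}$ vanishes in $\CH_d(S\times S)$, where $\gamma := \Gamma_f \circ \delta \circ {}^t\Gamma_f$.

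The first proof handles the special case $\delta = \Delta_X$, which makes the inner factor collapse to $\Gamma_f \circ {}^t\Gamma_f$. By Lemma \ref{1way} this cycle vanishes, since $\dim X > \dim S = 2$, and hence $\pi_2^{tr} \circ {}^t\pi_2^{tr} = 0$. This is short and particular to the geometric setup.

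For the general Hom-vanishing, I would invoke Lemmas \ref{trivialaction} and \ref{2way}. Observe that $\gamma'$ lies in $\CH_d(S\times S) = \CH^{4-d}(S\times S)$, hence in $\CH^0$ when $d=4$ and in $\CH^1$ when $d=3$. In either case, for a zero-cycle $z$ on $S_\Omega$, the image $({}^t\pi_2^{tr,S})_* z$ stays in $\CH_0(S_\Omega)$, its image under $\gamma_*$ lies in $\CH_{d-2}(S_\Omega)$, and the final application of $(\pi_2^{tr,S})_*$ annihilates it, because $\pi_2^{tr,S}$ was constructed to act trivially on $\CH_1(S_\Omega)$ and on $\CH_2(S_\Omega)$. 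Hence $(\gamma')_* = 0$ on zero-cycles. For $d=4$, Lemma \ref{trivialaction} immediately gives $\gamma' = 0$. For $d=3$, the symmetric calculation applied to ${}^t\gamma' = \pi_2^{tr,S} \circ {}^t\gamma \circ {}^t\pi_2^{tr,S}$ produces $(\gamma')^* = 0$ on zero-cycles as well, and Lemma \ref{2way} then concludes $\gamma' = 0$.

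The only subtle point is the bookkeeping of codimensions: one must confirm that $\gamma'$ lands in the correct $\CH^{\bullet}$ for the hypothesis of the lemma being applied, and that each arrow in the chain above maps into the stated Chow group so the composition actually factors through cycles on which $\pi_2^{tr,S}$ vanishes. Once these dimension counts are laid out, the argument is mechanical.
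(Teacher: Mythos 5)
Your proof is correct and, in structure, is the same as the paper's: the identity $\pi_2^{tr} \circ {}^t\pi_2^{tr} = 0$ from Lemma \ref{1way}, then the general Hom-vanishing by factoring through a correspondence $\gamma' \in \CH^{4-d}(S \times S)$, showing it annihilates zero-cycles, and invoking Lemma \ref{trivialaction} when $d=4$ and Lemma \ref{2way} when $d=3$. The one genuine divergence is the justification that $\gamma'$ (and its transpose) kill zero-cycles in the case $d=3$: the paper argues via functoriality of the Abel--Jacobi map (the image lands in the kernel of $AJ$ on homologically trivial divisors, which is trivial) and for that reason restricts to $k \subseteq \C$, adding an explicit caveat that the argument ought to extend to any base field. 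You instead use the fact recorded in \S\ref{firstproj} (from \cite{KMP}) that $\pi_2^{tr,S}$ acts trivially on $\CH_1(S_\Omega)$ and $\CH_2(S_\Omega)$: since $\gamma_*$ carries $\CH_0(S_\Omega)$ into $\CH_{d-2}(S_\Omega)$, the outer copy of $\pi_2^{tr,S}$ kills the result. This is precisely the mechanism the paper itself deploys in Proposition \ref{trivialactionp2}, and it works over any field, so your route for the $d=3$ step is, if anything, cleaner than the printed one. Two harmless omissions: you do not mention $d>4$, where $\CH^{4-d}(S\times S)=0$ makes the claim vacuous (the proposition is stated in a section allowing $d \geq 3$), and for $d=4$ the paper phrases the vanishing as ``the image consists of homologically trivial $2$-cycles,'' which is the same computation as yours seen from the other end of the composition.
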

\begin{proof}
  From Lemma \ref{1way} and from the very definition of $\pi_2^{tr}$,
  it is immediate that $\pi_2^{tr} \circ {}^t\pi_2^{tr} =0$.  Let now
  $\alpha$ be a correspondence in $\CH_d(X \times X)$. The
  correspondence $\pi_2 \circ \alpha \circ {}^t\pi_{2}^{tr}$ factors
  through a correspondence $\gamma \circ \pi_2^{tr,S} \in \CH_d(S
  \times S)$. If $d >4$, then the statement is clear. If $d=4$, we use
  the fact that $\pi_2^{tr,S}$ sends zero-cycles on $S$ to zero-cycles
  in the Albanese kernel of $S$. Hence, $\gamma\circ \pi_2^{tr,S}$
  sends zero-cycles on $S$ to homologically trivial two-cycles on $S$.
  In particular, $\gamma\circ \pi_2^{tr,S}$ acts trivially on
  zero-cycles on $S$ and we can therefore apply Lemma
  \ref{trivialaction}. Let's now assume that $d=3$ and let's give a
  proof using Lemma \ref{2way} when the base field is a subfield of
  $\C$.  The reason is that we use Abel--Jacobi maps (although it is
  almost certainly true that the Albanese variety and the Picard
  variety enjoy the required functoriality properties over any base
  field).  The correspondence $\pi_2^{tr} \circ \alpha \circ
  {}^t\pi_2^{tr} \in \CH_3(X \times X)$ factors through a
  correspondence $\pi_2^{tr, S} \circ \gamma \circ {}^t\pi_2^{tr,S}
  \in \CH^1(S \times S)$. In particular, by functoriality of the
  Abel--Jacobi map, $\pi_2^{tr, S} \circ \gamma \circ
  {}^t\pi_2^{tr,S}$ sends $0$-cycles on $S_\C$ to $1$-cycles on $S_\C$
  which lie in the kernel of the Abel--Jacobi map.  This last kernel
  is trivial.  Therefore, $\pi_2^{tr, S} \circ \gamma \circ
  {}^t\pi_2^{tr,S}$ acts trivially on zero-cycles on $S_\Omega$.
  Clearly the same holds for its transpose.  Therefore, $\pi_2^{tr, S}
  \circ \gamma \circ {}^t\pi_2^{tr,S} = 0$ and, hence, $\pi_2^{tr} \circ
  \alpha \circ {}^t\pi_2^{tr} =0$.
\end{proof}

\begin{proposition} \label{trivialactionp2}
  ${}^t\pi_2^{tr} \circ \pi_2^{tr} $ acts trivially on $\CH_*(X_\Omega)$.
\end{proposition}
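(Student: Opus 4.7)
The plan is to unpack the composition using the explicit factorisation $\pi_2^{tr} = \Gamma \circ \pi_2^{tr,S} \circ \Gamma_f$ and to track the degrees of the Chow groups involved, in the same spirit as Lemma \ref{1way}. I will attack $({}^t\pi_2^{tr} \circ \pi_2^{tr})_*$ acting on $\CH_i(X_\Omega)$ by first examining what $(\pi_2^{tr})_*$ already does, and then feeding the result into $({}^t\pi_2^{tr})_*$.

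For $i \neq 0$ the operator $(\pi_2^{tr})_*$ already vanishes on $\CH_i(X_\Omega)$: the push-forward $(\Gamma_f)_*y = f_*y$ lies in $\CH_i(S_\Omega)$, which is zero for $i \geq 3$ because $\dim S = 2$, and for $i \in \{1,2\}$ the fact recalled in \S\ref{firstproj} that $\pi_2^{tr,S}$ acts trivially on $\CH_1(S_\Omega)$ and on $\CH_2(S_\Omega)$ forces $(\pi_2^{tr,S})_* f_*y = 0$. Thus only the case of zero-cycles requires work. For $y \in \CH_0(X_\Omega)$ one has $(\pi_2^{tr})_*y = \Gamma_* z$ for some $z \in \CH_0(S_\Omega)$ (in fact in the Albanese kernel), so $(\pi_2^{tr})_*y$ is again a zero-cycle on $X_\Omega$. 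Applying ${}^t\pi_2^{tr}$ to it goes first through $({}^t\Gamma)_*$, and since $\Gamma \in \CH^d(S \times X) = \CH_2(S \times X)$ one has ${}^t\Gamma \in \CH_2(X \times S)$, whence
\[
({}^t\Gamma)_*\colon \CH_0(X_\Omega) \longrightarrow \CH_{2-d}(S_\Omega).
\]
For $d = \dim X \geq 3$, which is the range in \S\ref{dimsup3}, the target group is zero, and the whole composition vanishes on $\CH_0(X_\Omega)$.

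I anticipate no real obstacle: the argument reduces, as in Lemma \ref{1way}, to the hypothesis $\dim X > \dim S$, which forces ${}^t\Gamma$ to push zero-cycles on $X$ into a negative-dimensional Chow group of the surface $S$. The only care needed is in verifying the compositional bookkeeping of dimensions, but this is immediate from the degrees $\Gamma \in \CH_2(S \times X)$, $\pi_2^{tr,S} \in \CH_2(S \times S)$ and $\Gamma_f \in \CH_d(X \times S)$.
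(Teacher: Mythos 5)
Your argument is correct and is essentially the paper's own proof: both reduce to the factorisation through $S$, use that $\pi_2^{tr,S}$ kills $\CH_1(S_\Omega)$ and $\CH_2(S_\Omega)$, and dispose of zero-cycles by a dimension count coming from $\dim X>\dim S$. The only cosmetic difference is that the paper contracts the middle of the composition into a single correspondence $\gamma\circ\pi_2^{tr,S}\in\CH_{4-d}(S\times S)$ and reads everything off its degree, whereas you trace the action of each factor on cycles explicitly; the underlying facts are identical.
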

\begin{proof}
  The correspondence ${}^t\pi_2^{tr} \circ \pi_2^{tr} $ factors
  through a correspondence $\gamma\circ \pi_2^{tr,S} \in \CH_{4-d}(S
  \times S)$. The proposition follows immediately since the idempotent
  $ \pi_2^{tr,S}$ acts trivially on $\CH_1(X_\Omega)$ and on
  $\CH_2(X_\Omega)$.
\end{proof}

Let's then define $p_0:=\pi_0$, $p_1:=\pi_1$ and
$p_2^{tr}:=(1-\frac{1}{2}{}^t\pi_2^{tr}) \circ \pi_2^{tr}$. It is
clear that these are idempotents and that $\{p_0, p_1, p_2^{tr},
{}^tp_2^{tr}, {}^tp_1, {}^tp_0 \}$ is a set of mutually orthogonal
idempotents in $\CH_d(X \times X)$. This yields a splitting $$\h(X) =
(X,p_0) \oplus (X, p_1) \oplus (X, p_2^{tr}) \oplus (X, {}^tp_2^{tr})
\oplus (X, {}^tp_1) \oplus (X, {}^tp_0) \oplus M.$$

\begin{proposition} \label{vanishingchow} $(p_0 + p_1 +
  p_2^{tr})_*\CH_0(X_\Omega) = \CH_0(X_\Omega)$.
\end{proposition}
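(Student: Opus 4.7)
The plan is to reduce this proposition to Proposition \ref{vanishingchowS} by showing that modifying $\pi_2^{tr}$ to $p_2^{tr}$ does not change its action on $\CH_0(X_\Omega)$. Since $p_0=\pi_0$ and $p_1=\pi_1$ by definition, the only thing to check is that $(p_2^{tr})_*\CH_0(X_\Omega) = (\pi_2^{tr})_*\CH_0(X_\Omega)$, after which the conclusion is immediate from Proposition \ref{vanishingchowS}.

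First I would expand the definition $p_2^{tr} = \pi_2^{tr} - \tfrac{1}{2}\, {}^t\pi_2^{tr} \circ \pi_2^{tr}$, so that the difference between $(p_2^{tr})_*$ and $(\pi_2^{tr})_*$ acting on $\CH_0(X_\Omega)$ is the correspondence $-\tfrac{1}{2}\, {}^t\pi_2^{tr} \circ \pi_2^{tr}$. The key input is Proposition \ref{trivialactionp2}, which says precisely that ${}^t\pi_2^{tr} \circ \pi_2^{tr}$ acts trivially on $\CH_*(X_\Omega)$, and in particular on $\CH_0(X_\Omega)$. Therefore $(p_2^{tr})_* z = (\pi_2^{tr})_* z$ for every $z \in \CH_0(X_\Omega)$.

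Summing over the three idempotents, I get
\[
(p_0 + p_1 + p_2^{tr})_* \CH_0(X_\Omega) \;=\; (\pi_0 + \pi_1 + \pi_2^{tr})_* \CH_0(X_\Omega),
\]
and the right-hand side equals $\CH_0(X_\Omega)$ by Proposition \ref{vanishingchowS}. There is no real obstacle: the non-trivial work has already been done in Proposition \ref{trivialactionp2} (which uses that ${}^t\pi_2^{tr} \circ \pi_2^{tr}$ factors through $\pi_2^{tr,S}$ acting trivially on $\CH_1$ and $\CH_2$ of $S_\Omega$) and in Proposition \ref{vanishingchowS}. This proposition is merely the bookkeeping step recording that the symmetrisation trick defining $p_2^{tr}$ preserves the surjectivity statement we need on zero-cycles.
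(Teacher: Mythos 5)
Your proof is correct and follows exactly the paper's argument: the paper likewise deduces $(p_2^{tr})_*x = (\pi_2^{tr})_*x$ on $\CH_*(X_\Omega)$ from Proposition \ref{trivialactionp2} and then concludes via Proposition \ref{vanishingchowS}. You have merely written out the expansion of $p_2^{tr}$ that the paper leaves implicit.
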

\begin{proof} By Proposition \ref{trivialactionp2} we see that
  $(p_2^{tr})_*x = (\pi_2^{tr})_*x$ for all $X \in \CH_*(X_\Omega)$. We
  can therefore conclude with Proposition \ref{vanishingchowS}.
 \end{proof}

 \begin{theorem} \label{finedec} There exists a smooth projective
   variety $Z$ of dimension $d-2$ and an idempotent $q \in \CH_{d-2}(Z
   \times Z)$ such that $$\h(X) = (X,p_0) \oplus (X, p_1) \oplus (X,
   p_2^{tr}) \oplus (X, {}^tp_2^{tr}) \oplus (X, {}^tp_1) \oplus (X,
   {}^tp_0) \oplus (Z,q,1).$$
\end{theorem}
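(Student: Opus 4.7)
The plan is to apply Corollary \ref{effective-coro} to the complementary direct summand of the six-term sum $(X,p_0) \oplus (X,p_1) \oplus (X,p_2^{tr}) \oplus (X,{}^tp_2^{tr}) \oplus (X,{}^tp_1) \oplus (X,{}^tp_0)$ inside $\h(X)$. Write $P := p_0 + p_1 + p_2^{tr} + {}^tp_2^{tr} + {}^tp_1 + {}^tp_0$. Since the six summands are mutually orthogonal idempotents (this was established in \S\ref{dimsup3}), $P$ is itself an idempotent, and hence so is $\Delta_X - P$. Set $M := (X, \Delta_X - P, 0)$, an effective Chow motive of dimension $d$. We then have
$$\h(X) = (X,p_0) \oplus (X,p_1) \oplus (X,p_2^{tr}) \oplus (X,{}^tp_2^{tr}) \oplus (X,{}^tp_1) \oplus (X,{}^tp_0) \oplus M.$$

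To invoke Corollary \ref{effective-coro} with $n = m = 1$, I need to verify its two vanishing hypotheses for $M$. The first is $\CH_0(M_\Omega) = 0$: this follows at once from Proposition \ref{vanishingchow}, since $(\Delta_X - P)_* \CH_0(X_\Omega) = \CH_0(X_\Omega) - P_* \CH_0(X_\Omega) = 0$. The second is $\CH_0\bigl(M^\vee(d)_\Omega\bigr) = 0$, equivalently $\bigl({}^t(\Delta_X - P)\bigr)_* \CH_0(X_\Omega) = 0$. But the defining collection $\{p_0, p_1, p_2^{tr}, {}^tp_2^{tr}, {}^tp_1, {}^tp_0\}$ of $P$ is manifestly stable under transposition, so ${}^tP = P$, and the second hypothesis reduces to the first.

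Corollary \ref{effective-coro} then supplies a smooth projective variety $Z$ of dimension $d - 2$ such that $M$ is isomorphic to a direct summand of $\h(Z)(1)$; equivalently, there exists an idempotent $q \in \CH_{d-2}(Z \times Z)$ with $M \simeq (Z,q,1)$, which is exactly what the theorem asserts. The argument is essentially mechanical at this point: the substantive input has already been prepared, namely the mutual orthogonality of the six projectors (together with the key vanishings $\pi_2^{tr} \circ {}^t\pi_2^{tr} = 0$ and trivial action of ${}^t\pi_2^{tr}\circ\pi_2^{tr}$ on Chow groups from Propositions \ref{pi2vanish} and \ref{trivialactionp2}) and the surjectivity statement of Proposition \ref{vanishingchow}. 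There is no additional obstacle.
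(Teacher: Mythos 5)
Your proposal is correct and follows exactly the paper's own (one-line) proof: the paper likewise obtains the theorem by combining Proposition \ref{vanishingchow} with Corollary \ref{effective-coro} applied to the complementary summand $M=(X,\Delta_X-P)$, using that $P$ is fixed by transposition so that both vanishing hypotheses of the corollary reduce to the single statement $\CH_0(M_\Omega)=0$. The only cosmetic quibble is that the display $(\Delta_X-P)_*\CH_0(X_\Omega)=\CH_0(X_\Omega)-P_*\CH_0(X_\Omega)$ is an abuse of notation; the honest argument, which you clearly have in mind, is that every $x\in\CH_0(X_\Omega)$ is of the form $(p_0+p_1+p_2^{tr})_*y$ and is therefore killed by $\Delta_X-P$ by orthogonality.
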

\begin{proof} The theorem is a combination of Proposition
  \ref{vanishingchow} and Corollary \ref{effective-coro}.
 \end{proof}

\begin{theorem} If $d \leq 4$, then $X$ has a Murre decomposition.
\end{theorem}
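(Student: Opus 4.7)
The plan is to deduce the theorem directly from the orthogonal decomposition
\[\h(X) \simeq (X,p_0)\oplus (X,p_1)\oplus (X,p_2^{tr})\oplus (X,{}^tp_2^{tr})\oplus (X,{}^tp_1)\oplus (X,{}^tp_0)\oplus (Z,q,1)\]
supplied by Theorem \ref{finedec}, where $\dim Z = d-2 \leq 2$. Every summand factors through the motive of a point, a curve, or a surface (possibly Tate-twisted), so the surface case of Murre's conjectures (Theorem \ref{CKSurface}) will supply Chow--K\"unneth projectors for each piece; assembling them will yield a Chow--K\"unneth decomposition of $X$ and, by the same token, imply Murre's conjectures (B) and (D).

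I would first observe that the six ``outer'' summands are already concentrated in a single homological degree. Indeed $(X,p_0)$ and $(X,{}^tp_0)$ are sums of copies of the unit motive living in $H_0(X)$ and $H_{2d}(X)$; $(X,p_1)$ and $(X,{}^tp_1)$ factor through $\h_1$ of a curve (twisted in the second case) and live in $H_1(X)$ and $H_{2d-1}(X)$; and for the transcendental pieces, the fact established in \S\ref{firstproj} that $\pi_2^{tr}$ and ${}^t\pi_2^{tr}$ act on $H_*(X)$ as projectors onto subspaces of $H_2(X)$ and $H_{2d-2}(X)$ respectively (which are disjoint since $d \in \{3,4\}$) ensures that $(p_2^{tr})_*H_*(X) \subseteq H_2(X)$ and $({}^tp_2^{tr})_*H_*(X) \subseteq H_{2d-2}(X)$. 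The only real work is in refining $(Z,q,1)$: for $d=3$, $Z$ is a curve and $(Z,q)$ carries its evident Chow--K\"unneth decomposition, while for $d=4$, $Z$ is a surface and Theorem \ref{CKSurface} provides a Murre decomposition $\{\rho_0,\ldots,\rho_4\}$ of $(Z,q)$. After Tate-twisting by $(1)$, these sub-projectors are concentrated in $H_{i+2}(X)$.

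Setting $p_i$ to be the sum of all contributions to $H_i(X)$ from the various summands, I obtain mutually orthogonal idempotents $\{p_0,\ldots,p_{2d}\}$ in $\CH_d(X\times X)$ summing to $\Delta_X$ and realising the K\"unneth projectors on $H_*(X)$; mutual orthogonality is automatic from the orthogonality of the seven summands of Theorem \ref{finedec} and of the sub-projectors within $(Z,q,1)$. Finally, Murre's conjectures (B) and (D) would follow by feeding each $p_i$ into Propositions \ref{MurreB} and \ref{MurreC}: since every $p_i$ factors through a motive of dimension at most two, (B) and (D) for $p_i$ reduce to the corresponding assertions on motives of curves and surfaces, which are already the content of Theorem \ref{CKSurface}. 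The main obstacle, such as it is, will be the bookkeeping for the middle projectors in the fourfold case, where $p_2$, $p_3$, $p_4$ each receive contributions both from outer summands and from $(Z,q,1)$; but since these contributions come from orthogonal summands of $\h(X)$ and each individual piece is handled by a known result, no genuinely new argument is required.
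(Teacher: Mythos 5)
Your proposal is correct and takes essentially the same route as the paper: the paper's own proof consists precisely of the one-line observation that the statement follows from Theorem \ref{finedec} combined with Theorem \ref{CKSurface}, and your write-up merely spells out the bookkeeping (concentration of the outer summands in single homological degrees, refinement of $(Z,q,1)$, and orthogonal assembly) that the paper leaves implicit.
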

\begin{proof} The theorem follows from Theorem \ref{finedec} and
  Theorem \ref{CKSurface}.
 \end{proof}

 Let's write $\h(X) = (X,p) \oplus (Z,q,1)$, where $p = p_0+ p_1+
 p_2^{tr}+{}^tp_2^{tr}+ {}^tp_1+ {}^tp_0$.  Although $\h(X) =
 \h(X)^\vee(d) = (X,p) \oplus (Z,{}^tq,1)$, it is not clear that
 $(Z,q,1)$ is self-dual, i.e. isomorphic to $(Z,{}^tq,1)$. Thus we
 need to refine the above construction.

\subsection{The projectors $\pi_2^{alg}$ and $\pi_3$} \label{proj3}

Until \S \ref{dim3}, the dimension $d$ of $X$ is supposed to be $\geq
4$. \medskip

Let's go back to the situation and notations of \S \ref{firstproj}.
Let $p := \Delta_X - (\pi_0 + \pi_1 + \pi_2^{tr})$. We have the
decomposition $\h(X) = (X,\pi_0) \oplus (X,\pi_1) \oplus
(X,\pi_2^{tr}) \oplus M$ with $M = (X,p)$ isomorphic to $(Y,q,1)$.
Choose an isomorphism $f : (Y,q,1) \r M$ and let $g : M \r (Y,q,1)$ be
its inverse. Let $q_0^Y$ and $q_1^Y$ be respectively the point
projector and the Albanese projector of Proposition \ref{albpic} for
$(Y,q,0)$.  We define idempotents $\pi_2^{alg} := f \circ q_0^Y \circ
g$ and $\pi_3 : = f \circ q_1^Y \circ g$.  \medskip

These two idempotents are orthogonal and are obviously orthogonal to
the idempotents $\pi_0$, $\pi_1$ and $\pi_2^{tr}$ previously defined.
Their action on cohomology is the expected one: we have $H_2(X) =
H_2(X,\pi_2^{tr}) \oplus H_2(M)$ but $H_2(M) = H_0(Y,q) =
H_0(Y,\pi_0^Y)$. Therefore $\pi_2 := \pi_2^{tr} + \pi_2^{alg}$ induces
the K\"unneth projector $H_*(X) \r H_2(X) \r H_*(X)$. We also have
$H_3(X) = H_3(M) = H_1(Y,q) = H_1(Y,\pi_1^Y)$ and hence
$(\pi_3)_*H_*(X) = H_3(X)$.

\subsection{The remaining projectors} \label{remainingproj}

We now define $\pi_{2d} := {}^t\pi_0$, $\pi_{2d-1} := {}^t\pi_1$,
$\pi_{2d-2} := {}^t\pi_2$ and $\pi_{2d-3} := {}^t\pi_3$. By Poincar\'e
duality, these idempotents satisfy $(\pi_i)_*H_*(X) = H_i(X)$.

\subsection{Orthonormalising the projectors} \label{orthoproj}

We have the following non-commutative Gram--Schmidt process \cite[Lemma
2.12]{Vial3}

\begin{lemma} \label{linalg} Let $V$ be a $\Q$-algebra and let $k$ and
  $n$ be positive integers. Let $\pi_0, \ldots, \pi_n$ be idempotents
  in $V$ such that $\pi_i \circ \pi_j = 0$ whenever $i -j < k$ and $i
  \neq j$. Then the endomorphisms $$p_i := (1-\frac{1}{2}\pi_n) \circ
  \cdots \circ (1-\frac{1}{2}\pi_{i+1}) \circ \pi_i \circ
  (1-\frac{1}{2}\pi_{i-1}) \circ \cdots \circ (1-\frac{1}{2}\pi_0)$$
  define idempotents such that $p_i \circ p_j = 0$ whenever $i -j <
  k+1$ and $i \neq j$.
\end{lemma}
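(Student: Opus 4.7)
The plan is to prove the lemma by direct computation, exploiting two observations that the hypothesis $\pi_i\circ\pi_j = 0$ for $i-j<k$, $i\neq j$, makes available. Write $u_l := 1 - \tfrac{1}{2}\pi_l$ and $p_i = L_i \circ \pi_i \circ R_i$ with $L_i = u_n\circ\cdots\circ u_{i+1}$ and $R_i = u_{i-1}\circ\cdots\circ u_0$. The first observation is that $\pi_i\circ u_l = \pi_i$ whenever $0 < i - l < k$, and $u_l\circ \pi_i = \pi_i$ whenever $0 < l - i < k$; hence $\pi_i\circ R_i = \pi_i\circ R_i^{\mathrm{red}}$ where $R_i^{\mathrm{red}} := u_{i-k}\circ u_{i-k-1}\circ\cdots\circ u_0$ only involves indices $\le i-k$, and symmetrically $L_i\circ \pi_i = L_i^{\mathrm{red}}\circ \pi_i$ with $L_i^{\mathrm{red}}$ only involving indices $\ge i+k$. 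The second observation is that whenever, upon expanding such a product, a monomial ends with some $\pi_a$ followed by one that starts with $\pi_b$ with $a < b$, the pair $\pi_a\circ\pi_b$ vanishes. Combining these two observations yields identities such as $R_i\circ L_i = R_i + L_i - 1$ and, crucially, $R_i^{\mathrm{red}}\circ L_j^{\mathrm{red}} = R_i^{\mathrm{red}} + L_j^{\mathrm{red}} - 1$ whenever $i > j$ and $i - j \le k$, because then the largest index $i-k$ in $R_i^{\mathrm{red}}$ is strictly less than the smallest index $j+k$ in $L_j^{\mathrm{red}}$.

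Idempotency is then immediate: from $R_i\circ L_i = R_i + L_i - 1$ one obtains $p_i\circ p_i = L_i\circ\pi_i\circ(R_i + L_i - 1)\circ\pi_i\circ R_i$, and the contributions from $R_i'$ and $L_i'$ (their non-trivial parts) vanish because each monomial ends (resp. begins) with some $\pi_a$ satisfying $\pi_a\circ\pi_i = 0$ (resp. $\pi_i\circ\pi_a = 0$), leaving only $L_i\circ\pi_i^2\circ R_i = p_i$. The orthogonality $p_i\circ p_j = 0$ for $i\neq j$, $i-j<k+1$, splits into three cases. If $i < j$, I use $R_i\circ L_j = R_i + L_j - 1$ and check that each of the three resulting summands vanishes: the constant part because $\pi_i\circ\pi_j = 0$, the $R_i'$ part because it ends with some $\pi_{a_m}$ having $a_m < i < j$, the $L_j'$ part because it begins with some $\pi_{b_1}$ having $b_1 > j > i$. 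If $i > j$ with $i - j < k$, I replace $R_i$ by $R_i^{\mathrm{red}}$ and $L_j$ by $L_j^{\mathrm{red}}$ and run the same argument; the strict inequality guarantees $\pi_i\circ\pi_j = 0$ along with the vanishing of the other two summands.

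The genuinely new case, and the main obstacle, is the boundary $i > j$ with $i - j = k$, where $\pi_i\circ\pi_j$ need not vanish. Here $j = i-k$ is exactly the largest index in $R_i^{\mathrm{red}}$ and $i = j+k$ the smallest in $L_j^{\mathrm{red}}$, so I factor $R_i^{\mathrm{red}} = u_j\circ T$ and $L_j^{\mathrm{red}} = S\circ u_i$ with $T$, $S$ involving only indices strictly smaller, resp.\ larger, than $j$, $i$. Using $T\circ\pi_j = \pi_j$ and $\pi_i\circ S = \pi_i$ (by the end-point vanishing argument), together with $u_j\circ \pi_j = \tfrac{1}{2}\pi_j$ and $\pi_i\circ u_i = \tfrac{1}{2}\pi_i$, I obtain $\pi_i\circ R_i^{\mathrm{red}}\circ\pi_j = \tfrac{1}{2}(\pi_i\circ\pi_j) = \pi_i\circ L_j^{\mathrm{red}}\circ\pi_j$. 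Substituting back into $p_i\circ p_j = L_i\circ\pi_i\circ(R_i^{\mathrm{red}} + L_j^{\mathrm{red}} - 1)\circ\pi_j\circ R_j$ produces $L_i\circ\bigl(\tfrac{1}{2} + \tfrac{1}{2} - 1\bigr)(\pi_i\circ\pi_j)\circ R_j = 0$. The coefficient $\tfrac{1}{2}$ in the ansatz is precisely what is needed for these two half-contributions to annihilate the surviving $-\pi_i\circ\pi_j$ term.
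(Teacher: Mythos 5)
Your proof is correct and complete: the reduction $\pi_i\circ R_i=\pi_i\circ R_i^{\mathrm{red}}$, the identities of the form $R\circ L=R+L-1$ coming from the end-point vanishing $\pi_a\circ\pi_b=0$ for $a<b$, and the half-plus-half cancellation of $\pi_i\circ\pi_j$ at the boundary case $i-j=k$ all check out, and together they cover every pair with $i-j<k+1$, $i\neq j$. The paper does not reprove this lemma --- it quotes it from \cite[Lemma 2.12]{Vial3} --- and your direct expansion is essentially the argument given there, in particular isolating correctly why the coefficient $\tfrac{1}{2}$ is forced.
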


Let's state an orthonormalisation result which is valid in a general
setting and that we can apply to our particular case of interest.

\begin{theorem} \label{GS} Let $X$ be a smooth projective variety of
  dimension $d$.  Let $\pi_0, \ldots, \pi_n \in \CH_d(X \times X)$ be
  idempotents such that $\pi_r \circ \pi_s =0$ for all $ r < s $. Then
  applying $n$ times the Gram--Schmidt process of Lemma \ref{linalg}
  gives mutually orthogonal idempotents $p_0, \ldots, p_n$ such that
  $(X,p_r) \simeq (X,\pi_r)$ for all $r$. Furthermore, \medskip

$\bullet$ if there exists $r$ such that
  $(\pi_r)_*H_*(X) = H_r(X)$, then $(p_r)_*H_*(X) =
  H_r(X)$.

$\bullet$ if $\pi_r = {}^t\pi_{n-r}$ for all $r$, then $p_r =
{}^tp_{n-r}$ for all $r$;

$\bullet$ if there exists $r$ such that $\pi_s \circ \pi_r$ acts
trivially on $\CH_*(X)$ for all $s > r$, then $(p_r)_*\CH_*(X) =
(\pi_r)_*\CH_*(X)$ inside $\CH_*(X)$.
\end{theorem}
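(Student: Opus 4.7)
The proof goes by induction on the number $m$ of applications of Lemma \ref{linalg}; write $\pi_i^{(m)}$ for the idempotent obtained from $\pi_i$ after $m$ iterations, so $p_i = \pi_i^{(n)}$. The hypothesis $\pi_i \pi_j = 0$ for $i < j$ matches the condition of Lemma \ref{linalg} with $k=1$; applying the lemma $n$ times yields orthogonality $\pi_i^{(n)} \pi_j^{(n)} = 0$ whenever $i - j < n+1$ and $i \neq j$, which for indices in $\{0, \dots, n\}$ reduces to $i \neq j$, giving mutual orthogonality of the $p_i$'s. The upper-triangular vanishing $\pi_i^{(m)} \pi_j^{(m)} = 0$ for $i < j$ persists at every step.

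For the motivic isomorphism at step $m$, write $\pi_i^{(m)} = L \circ \pi_i^{(m-1)} \circ R$, where $L$ (resp.\ $R$) is the product of the factors $(1 - \frac{1}{2}\pi_j^{(m-1)})$ with $j > i$ (resp.\ $j < i$). The upper-triangular vanishing yields $\pi_i^{(m-1)} \circ L = \pi_i^{(m-1)}$ and $R \circ \pi_i^{(m-1)} = \pi_i^{(m-1)}$, from which a direct computation shows that $\pi_i^{(m)} \circ \pi_i^{(m-1)}$ and $\pi_i^{(m-1)} \circ \pi_i^{(m)}$ are mutually inverse morphisms between $(X, \pi_i^{(m-1)})$ and $(X, \pi_i^{(m)})$; iterating gives $(X, p_i) \simeq (X, \pi_i)$.

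Self-duality is then formal: transposing the formula for $\pi_i^{(m)}$ reverses composition order and, assuming inductively ${}^t \pi_j^{(m-1)} = \pi_{n-j}^{(m-1)}$, exchanges the factors of $L$ with those of the $R$ attached to index $n-i$, giving ${}^t \pi_i^{(m)} = \pi_{n-i}^{(m)}$. For the cohomology clause, $\pi_i^{(m)}$ factors as a correspondence through $\pi_r^{(m-1)}$; since correspondences in $\CH_d(X \times X)$ preserve cohomological grading and $(\pi_r^{(m-1)})_*$ annihilates $H_i(X)$ for $i \neq r$, so does $(\pi_r^{(m)})_*$, and the motivic isomorphism then forces the image to have full dimension $\dim H_r(X)$, yielding $(\pi_r^{(m)})_* H_*(X) = H_r(X)$.

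The most delicate clause is the Chow-group one, for which one must propagate the auxiliary invariant that $\pi_s^{(m)} \pi_r^{(m)}$ acts trivially on $\CH_*(X)$ for every $s > r$. Granting both invariants at step $m-1$, expanding $(\pi_r^{(m)})_* = L_* \circ (\pi_r^{(m-1)})_* \circ R_*$ shows that each factor $(1 - \frac{1}{2}\pi_j^{(m-1)})_*$ in $L$ acts as the identity on $(\pi_r^{(m-1)})_* \CH_*(X) = (\pi_r)_* \CH_*(X)$ by the auxiliary invariant, so $(\pi_r^{(m)})_* \CH_*(X) \subseteq (\pi_r)_* \CH_*(X)$; the reverse inclusion follows because the same computation shows $(\pi_r^{(m)})_* \circ (\pi_r^{(m-1)})_* = (\pi_r^{(m-1)})_*$ on $\CH_*(X)$, hence $(\pi_r^{(m)})_*$ is the identity on $(\pi_r)_* \CH_*(X)$. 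An analogous but more intricate expansion propagates the auxiliary invariant from step $m-1$ to step $m$; carrying this auxiliary invariant alongside the one stated in the theorem is the main bookkeeping difficulty of the argument.
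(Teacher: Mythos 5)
Your proof is correct and follows essentially the same route as the paper: iterate Lemma \ref{linalg} with $k=1$, observe that $\pi_r \circ p_r$ and $p_r \circ \pi_r$ are mutually inverse morphisms, read off self-duality from the formula, and track the action on Chow groups directly through the factors $(1-\tfrac{1}{2}\pi_j)$ using the triviality hypothesis. The only cosmetic differences are that the paper derives the cohomology clause from centrality of the K\"unneth projector in the image of $\CH_d(X\times X)$ in $\End(H_*(X))$ where you use grading-preservation plus a dimension count via the motivic isomorphism, and that you spell out as a two-inclusion image equality (with the propagated auxiliary invariant) what the paper merely asserts by ``examining the formula''.
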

\begin{proof} The idempotents $\pi_0, \ldots, \pi_n$ satisfy the
  assumptions of Lemma \ref{linalg} with $k=1$. Therefore, after
  having run $n$ times the orthonormalisation process of Lemma
  \ref{linalg}, we get mutually orthogonal idempotents. In order to
  prove the theorem, it suffices to prove each statement after each
  application of the orthonormalisation process. Given $r$, the
  isomorphism $(X,p_r) \simeq (X,\pi_r)$ is simply given by the
  correspondence $\pi_r \circ p_r$ ; its inverse is $p_r \circ \pi_r$
  as can be readily checked.

  If $(\pi_r)_*H_*(X) = H_r(X)$, then the image of $\pi_r$ in $H_d(X
  \times X) \simeq \Hom(H_*(X),H_*(X))$ is central. Therefore, if
  $\pi_r \circ \pi_s =0$ for all $ r < s $, then the image of $\pi_s
  \circ \pi_r$ in $H_d(X \times X)$ is trivial for all $s > r$. It is
  then straightforward to conclude that $(p_r)_*H_*(X) = H_r(X)$.

  If $\pi_r = {}^t\pi_{n-r}$ for all $r$, then it is straightforward
  to check from the formula of Lemma \ref{linalg} that $p_r =
  {}^tp_{n-r}$ for all $r$

  Let's fix $r$. Given the isomorphism $(X,p_r) \simeq (X,\pi_r)$, it
  is very tempting to conclude that $(p_r)_*\CH_*(X_\Omega) =
  (\pi_r)_*\CH_*(X_\Omega)$ in $\CH_*(X_\Omega)$. However, this appears
  not to be obvious at all and a careful analysis of the
  non-commutative Gram--Schmidt process needs to be carried on.  By
  examining the formula defining the idempotent $p_r$, together with
  the assumption that $\pi_s \circ \pi_r$ acts trivially on $\CH_*(X)$
  for all $s > r$, we see that, for $x \in \CH_*(X_\Omega)$, we have
  $(p_r)_*x = (\pi_r)_*x \in \CH_*(X_\Omega)$.
\end{proof}

We wish to apply Theorem \ref{GS} to the set of idempotents
$\{\pi_0,\pi_1,\pi_2,\pi_3, \pi_{2d-3}, \pi_{2d-2}, \pi_{2d-1},
\pi_{2d}\}$. In order to do so, we have to show that $\pi_i \circ
\pi_j = 0$ whenever $i < j$. We already know that $\pi_0$, $\pi_1$,
$\pi_2$ and $\pi_3$ are mutually orthogonal. Let's prove the missing
orthogonality relations. First we have :
\begin{itemize}
\item $\pi_0 \circ {}^t\pi_{0} = \pi_0 \circ {}^t\pi_{1} = \pi_0 \circ
  {}^t\pi_{2} = \pi_0 \circ {}^t\pi_{3} = 0$.
\item $\pi_1 \circ {}^t\pi_{1} = \pi_1 \circ
  {}^t\pi_{2} = \pi_1 \circ {}^t\pi_{3} = 0$.
\item $\pi_2 \circ {}^t\pi_{2}^{alg} = 0$ and hence $\pi_2 \circ
  {}^t\pi_2 = 0$ thanks to Proposition \ref{pi2vanish}.
  \end{itemize}

  These relations are obvious : one uses a dimension argument as well
  as the fact that $\pi_0$ (resp. $\pi_1$, $\pi_2^{tr}$,
  $\pi_2^{alg}$, $\pi_3$) factors through a variety $P_0$ (resp.
  $C_0$, $S$, $P_1$, $C_1$) of dimension $0$ (resp. $1$, $2$, $0$,
  $1$).  For instance, $\pi_1 \circ {}^t\pi_{3}$ factors through a
  correspondence in $\CH_{d-1}(C_1 \times C_0)$. If $d \geq 4$, then
  this last group is trivial.\medskip

  Using the same arguments, the following orthogonality relations can
  be further proved. These relations are not necessary to run the
  non-commutative Gram--Schmidt process but are essential to the proof
  of Proposition \ref{vanishingchow2}.
  \begin{itemize}
  \item $ {}^t\pi_0 \circ \pi_{0} = {}^t\pi_0 \circ \pi_{1} =
    {}^t\pi_0 \circ \pi_{2} = {}^t\pi_0 \circ \pi_{3} = 0$.
  \item $ {}^t\pi_1 \circ \pi_{1} = {}^t\pi_1 \circ \pi_{2} =
    {}^t\pi_1 \circ \pi_{3} = 0$.
  \item $ {}^t\pi_2 \circ \pi_{2}^{alg} = 0$.
  \item $ {}^t\pi_3 \circ \pi_{2}^{alg} = 0$.
  \end{itemize}

  Secondly, the remaining orthogonality relations needed to run the
  non-commutative Gram--Schmidt process follow from Lemma
  \ref{trivialaction}.

 \begin{itemize}
 \item $\pi_2 \circ {}^t\pi_{3} = 0$. The correspondence $\pi_2 \circ
   {}^t\pi_{3}$ factors through a correspondence $\gamma \in
   \CH_{d-1}(C_1 \times S_0)$, where $S_0$ is a surface, that sends
   zero-cycles to homologically trivial cycles on $S_0$. Again, if
   $d>4$, then the result is trivial. If $d=4$, then we conclude by
   Lemma \ref{trivialaction}.
 \item $\pi_3 \circ {}^t\pi_{3} = 0$. The correspondence $\pi_3 \circ
   {}^t\pi_{3}$ factors through a correspondence $\gamma \in
   \CH_{d-2}(C_1 \times C_1)$ that sends zero-cycles to homologically
   trivial cycles on $C$.  Again, if $d>4$, then the result is
   trivial. If $d=4$, then we conclude by Lemma \ref{trivialaction}.
\end{itemize}

We are now in a position to apply Theorem \ref{GS} to obtain a set of
mutually orthogonal idempotents $\{p_0,p_1,p_2,p_3, p_{2d-3},
p_{2d-2}, p_{2d-1}, p_{2d}\}$ such that $p_{2d-i}={}^tp_i$ which
induce the expected K\"unneth projectors modulo homological
equivalence.

\begin{remark} \label{missingorth} It follows from the above
  discussion that the only possible missing orthogonality relations
  among the idempotents $\pi_0,\pi_1,\pi_2^{alg},\pi_2^{tr}, \pi_3$
  and their transpose are the following.
  \begin{itemize}
  \item ${}^t\pi_3 \circ \pi_2^{tr}$.
  \item ${}^t\pi_2^{tr} \circ \pi_2^{tr}$.
  \item ${}^t\pi_3 \circ \pi_3$.
  \end{itemize}
  It can then be checked that it is actually enough to run the
  non-commutative Gram--Schmidt process only once on the set of
  idempotents $\{\pi_0,\pi_1,\pi_2,\pi_3, \pi_{2d-3}, \pi_{2d-2},
  \pi_{2d-1}, \pi_{2d}\}$ to obtain a set of mutually orthogonal
  idempotents. We can therefore describe the $p_i$'s in terms of the
  $\pi_i$'s by not too complicated explicit formulas. Such formulas
  may then be used, for instance, to give a quicker proof of the motivic
  Lefschetz conjecture for $X$. However, we describe a method that
  might be useful in other situations where the Gram--Schmidt process
  needs to be run several times.
\end{remark}

The following proposition is fundamental to proving Proposition
\ref{vanishingchow2} and, hence, to proving Murre's conjectures for
$X$.

\begin{proposition} \label{trivialactionchow} Let $p$ and $q$ be any
  two distinct idempotents among the idempotents $\pi_0$, $\pi_1$,
  $\pi_2$, $\pi_3$, $\pi_{2d-3}$, $\pi_{2d-2}$, $\pi_{2d-1}$ and
  $\pi_{2d}$.  Then $p \circ q$ acts trivially on $\CH_l(X_\Omega)$ for
  all $l$.
\end{proposition}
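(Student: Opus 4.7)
The plan is to handle the $56$ ordered pairs $(p,q)$ of distinct idempotents from the list $\{\pi_0, \pi_1, \pi_2, \pi_3, {}^t\pi_3, {}^t\pi_2, {}^t\pi_1, {}^t\pi_0\}$ by splitting according to whether both lie in the ``lower half'' $\{\pi_0, \ldots, \pi_3\}$, both lie in the ``upper half'' $\{{}^t\pi_3, \ldots, {}^t\pi_0\}$, or are of opposite halves. Same-half pairs give $p\circ q = 0$ directly: \S\ref{firstproj} and \S\ref{proj3} produce $\pi_0, \pi_1, \pi_2^{alg}, \pi_2^{tr}, \pi_3$ as mutually orthogonal idempotents, which immediately implies that $\pi_0, \pi_1, \pi_2 = \pi_2^{tr} + \pi_2^{alg}, \pi_3$ are mutually orthogonal as well. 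The upper-half case follows by transposing.

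For mixed pairs my plan is to exploit the factorisation, built into the construction, of each lower idempotent $\pi_r$ as a morphism of Chow motives through a small motive $N_r$ in the list $\{\mathds{1}, \h_1(C_0), \mathds{1}(1), \h_2^{tr}(S), \h_1(C_1)(1)\}$. The induced action $(\pi_r)_*$ on $\CH_l(X_\Omega)$ thereby factors through $\CH_l(N_r)$; using $\CH_l(M(n)) = \CH_{l-n}(M)$ together with the standard facts that $\h_1$ of a curve and $\h_2^{tr}$ of a surface have Chow groups concentrated in dimension zero, one sees that $\CH_l(N_r) = 0$ unless $l \in \{0,1\}$. Dualising via $M^\vee = (X, {}^tp, -\dim X - n)$, each upper idempotent ${}^t\pi_s$ factors through $N_s^\vee(d)$, whose Chow groups are concentrated in a dimension lying in $\{d-2, d-1, d\} = \{2, 3, 4\}$ for $d = 4$.

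Consequently, for any mixed composition $p \circ q$ and any $\alpha \in \CH_l(X_\Omega)$, functoriality yields $(p \circ q)_* \alpha = p_*(q_* \alpha)$; since the lower concentration set $\{0, 1\}$ and the upper concentration set $\{2, 3, 4\}$ are disjoint, one of the two intermediate Chow groups $\CH_l(N_q)$, $\CH_l(N_p)$ must vanish, forcing $(p \circ q)_* \alpha = 0$ for every $\alpha$ and every $l$. When $p$ or $q$ equals $\pi_2$ or ${}^t\pi_2$, one handles the two summands of the splitting $\pi_2 = \pi_2^{tr} + \pi_2^{alg}$ separately, with no change in conclusion. The only point that might require minor care is to ensure that the transcendental projector $\pi_2^{tr,S}$ on $S$ can be chosen to be symmetric, so that the duality calculation for ${}^t\pi_2^{tr}$ cleanly yields concentration dimension $d - 2$; this is standard for surface motives, and no substantive obstacle is anticipated---the proposition reduces entirely to the disjointness of concentration dimensions built into the factorisations established earlier in the section.
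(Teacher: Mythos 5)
Your proof is correct, but it is organised differently from the paper's. The paper first establishes (in \S\ref{orthoproj}, summarised in Remark \ref{missingorth}) that all but three of the compositions $p\circ q$ actually vanish identically, by dimension counts together with Lemmas \ref{1way}, \ref{trivialaction} and \ref{2way}; the proof of the proposition then only has to treat the three survivors ${}^t\pi_3\circ\pi_2^{tr}$, ${}^t\pi_2^{tr}\circ\pi_2^{tr}$ and ${}^t\pi_3\circ\pi_3$, which it does by exactly the mechanism you use: each factors through a correspondence of the form $\gamma\circ\pi_2^{tr,S}$ or $\gamma\circ\pi_1^{C_1}$ lying in a $\CH_0$ group, and $\pi_2^{tr,S}$ (resp.\ $\pi_1^{C_1}$) acts nontrivially only on zero-cycles. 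You instead skip the vanishing results entirely and run the ``disjoint concentration dimensions'' argument uniformly over all mixed pairs: every lower idempotent factors through a motive whose Chow groups live in dimensions $\{0,1\}$, every transposed one through a motive whose Chow groups live in dimensions $\{d-2,d-1,d\}$, and for $d\geq 4$ these sets are disjoint. This is a clean and correct way to get exactly the statement of the proposition (trivial action on $\CH_l(X_\Omega)$), and it buys uniformity at the cost of proving less: the stronger identities $p\circ q=0$ established in \S\ref{orthoproj} are not consequences of your argument, and they are needed elsewhere (to run the Gram--Schmidt process of Lemma \ref{linalg} at all, and in the proof of the motivic Lefschetz conjecture), so the paper cannot dispense with that case analysis even if this particular proposition could. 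Two small points of hygiene: for a transposed idempotent the intermediate motive is $N_s^\vee(d)$ rather than $N_s$, so the group you need to see vanish is $\CH_l(N_s^\vee(d))$ (your notation $\CH_l(N_p)$ in the final step elides this); and the symmetry ${}^t\pi_2^{tr,S}=\pi_2^{tr,S}$ that you flag is indeed part of the standard construction of the refined Chow--K\"unneth decomposition of a surface in \cite{KMP}, so that concern is easily discharged.
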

\begin{proof}
  From remark \ref{missingorth} we only need to prove that ${}^t\pi_3
  \circ \pi_2^{tr}$, ${}^t\pi_2^{tr} \circ \pi_2^{tr}$ and ${}^t\pi_3
  \circ \pi_3$ act trivially on $\CH_*(X_\Omega)$. In the first case,
  ${}^t\pi_3 \circ \pi_2^{tr}$ factors through a correspondence
  $\gamma \circ \pi_2^{tr,S} \in \CH_0(S \times C_1)$ for some curve
  $C_1$ and it therefore acts trivially on $\CH_*(X_\Omega)$ because $
  \pi_2^{tr,S}$ only acts possibly non-trivially on $\CH_0(S_\Omega)$.
  In the second case, ${}^t\pi_2^{tr} \circ \pi_2^{tr}$ factors
  through a correspondence $\gamma \circ \pi_2^{tr,S} \in \CH_0(S
  \times S)$ and we conclude in the same way. In the last case,
  ${}^t\pi_3 \circ \pi_3$ factors through a correspondence $\gamma
  \circ \pi_1^{C_1} \in \CH_0(C_1 \times C_1)$ which also acts
  trivially on $\CH_*((C_1)_\Omega)$ because $\pi_1^{C_1}$ acts
  trivially on $\CH_1((C_1)_\Omega)$.
\end{proof}

\begin{proposition} \label{vanishingchow2}
  $(p_0 + p_1 + p_2)_*\CH_0(X_\Omega) = \CH_0(X_\Omega)$.
\end{proposition}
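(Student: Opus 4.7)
The plan is to leverage the machinery already in place: the third bullet of Theorem \ref{GS}, together with Proposition \ref{trivialactionchow}, guarantees that the orthonormalised idempotents $p_r$ act on Chow groups exactly as the original $\pi_r$ do. More precisely, Proposition \ref{trivialactionchow} establishes that $\pi_s \circ \pi_r$ acts trivially on $\CH_*(X_\Omega)$ for every pair of distinct idempotents in the family $\{\pi_0,\pi_1,\pi_2,\pi_3,\pi_{2d-3},\pi_{2d-2},\pi_{2d-1},\pi_{2d}\}$, so in particular the hypothesis of the third bullet of Theorem \ref{GS} holds for each $r\in\{0,1,2,3\}$. I will therefore first record that $(p_r)_* x = (\pi_r)_* x$ for every $x\in\CH_*(X_\Omega)$ and every such $r$.

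Applied to the present statement, this reduces the problem to computing $(\pi_0 + \pi_1 + \pi_2)_* x$ for $x\in\CH_0(X_\Omega)$. Writing $\pi_2 = \pi_2^{tr} + \pi_2^{alg}$, the key observation is that $\pi_2^{alg}$ annihilates $\CH_0(X_\Omega)$: by construction $(X,\pi_2^{alg})\simeq (Y,q_0^Y,1)$, and Proposition \ref{albpic} identifies $(Y,q_0^Y)$ with $\mathds{1}^{\oplus n}$, so $(X,\pi_2^{alg})\simeq \mathds{1}(1)^{\oplus n}$, a motive whose zero-dimensional Chow group vanishes. Hence on $\CH_0(X_\Omega)$ one has the identity of maps $(\pi_0+\pi_1+\pi_2)_* = (\pi_0+\pi_1+\pi_2^{tr})_*$.

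To conclude, I will note that $\pi_0+\pi_1+\pi_2^{tr}$ is a sum of mutually orthogonal idempotents, hence itself an idempotent in $\CH_d(X\times X)$. Proposition \ref{vanishingchowS} asserts that the image of its action on $\CH_0(X_\Omega)$ is the whole group; for an idempotent endomorphism of a module, surjectivity forces it to be the identity. Putting the pieces together gives $(p_0+p_1+p_2)_*x = x$ for every $x\in\CH_0(X_\Omega)$, which is even slightly stronger than the stated equality of images. There is no serious obstacle: the only subtle input beyond bookkeeping is recognising that $\pi_2^{alg}$ kills zero-cycles, and that is immediate from the explicit description of the Albanese-type summand in Proposition \ref{albpic}.
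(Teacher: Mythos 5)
Your proof is correct and follows essentially the same route as the paper's: Proposition \ref{trivialactionchow} plus the third bullet of Theorem \ref{GS} to get $(p_i)_*x=(\pi_i)_*x$ on $\CH_0(X_\Omega)$ for $i=0,1,2$, and then Proposition \ref{vanishingchowS}. Your explicit check that $\pi_2^{alg}$ annihilates zero-cycles (via $(X,\pi_2^{alg})\simeq\mathds{1}(1)^{\oplus n}$) merely fills in a step the paper leaves implicit when passing from $\pi_2^{tr}$ to $\pi_2$.
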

\begin{proof}
  Proposition \ref{trivialactionchow} and Theorem \ref{GS} imply that
  $(\pi_i)_*x = (p_i)_*x$ for $i = 0, 1$ or $2$ and for all $x \in
  \CH_0(X_\Omega)$. By Proposition \ref{vanishingchowS}, this yields
  $(p_0+p_1 + p_2)_*\CH_0(X_\Omega) = \CH_0(X_\Omega)$ as claimed.
\end{proof}

Finally, when $d=4$, we define $p_4:=\Delta_X - \sum_{i \neq 4} p_i$.
The set $\{p_i\}_{0 \leq i \leq 8}$ is then a self-dual Chow--K\"unneth
decomposition for $X$.  Moreover, $p_4$ has the following property.

\begin{proposition}
  $(X,p_4)$ is isomorphic to a direct summand of $\h(S_1)(1)$ for some
  smooth projective surface $S_1$.
\end{proposition}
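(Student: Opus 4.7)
The plan is to apply Corollary \ref{effective-coro} to the effective motive $M = (X, p_4, 0)$ with parameters $m = n = 1$. That corollary will yield that $M$ is isomorphic to a direct summand of $\h(Z)(1)$ for some smooth projective variety $Z$ of dimension $\dim X - m - n = 4 - 2 = 2$, i.e.\ a surface $S_1$, which is exactly the conclusion sought.

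To invoke the corollary I must check its two hypotheses: first, $\CH_0(M_\Omega) = (p_4)_*\CH_0(X_\Omega) = 0$ (taking $n=1$); second, $\CH_{-\dim X}(M^\vee_\Omega) = ({}^tp_4)_*\CH_0(X_\Omega) = 0$ (taking $m=1$). Because the Chow--K\"unneth decomposition $\{p_i\}$ constructed above is self-dual, we have $p_4 = {}^tp_{2d-4} = {}^tp_4$, so the two conditions coincide. It therefore suffices to verify $(p_4)_*\CH_0(X_\Omega) = 0$.

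For this, the key input is Proposition \ref{vanishingchow2}, which asserts $(p_0 + p_1 + p_2)_*\CH_0(X_\Omega) = \CH_0(X_\Omega)$. Since $\{p_0,\ldots,p_8\}$ is a set of mutually orthogonal idempotents summing to $\Delta_X$, the group $\CH_0(X_\Omega)$ splits as the internal direct sum $\bigoplus_{i=0}^{8}(p_i)_*\CH_0(X_\Omega)$. The fact that the first three summands already exhaust $\CH_0(X_\Omega)$ forces all of the remaining summands, in particular $(p_4)_*\CH_0(X_\Omega)$, to vanish. There is no real obstacle here: once the self-dual decomposition has been built in \S\ref{orthoproj} and the Chow-theoretic vanishing in Proposition \ref{vanishingchow2} is in hand, the statement is an immediate formal consequence of Corollary \ref{effective-coro}.
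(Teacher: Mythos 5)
Your proof is correct and follows essentially the same route as the paper: deduce $(p_4)_*\CH_0(X_\Omega)=0$ from Proposition \ref{vanishingchow2} via the orthogonality of the $p_i$'s, use the self-duality $p_4={}^tp_4$ to get the dual vanishing for free, and apply Corollary \ref{effective-coro} with $m=n=1$. The only difference is that you spell out the intermediate step (the direct-sum decomposition of $\CH_0(X_\Omega)$) that the paper leaves implicit.
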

\begin{proof}
  Proposition \ref{vanishingchow2} implies that
  $(p_4)_*\CH_0(X_\Omega)=0$. Also we know that $p_4 = {}^tp_4$.
  Therefore, the result follows immediately from Corollary
  \ref{effective-coro}.
\end{proof}

\subsection{Back to the case $\dim X = 3$} \label{dim3}

Let's now consider the case of a conic fibration over a surface. In
section \ref{CK3}, we already gave a quick argument showing that $X$
has a Chow--K\"unneth decomposition. As in the case $\dim X =4$, we
want to show that a Chow--K\"unneth decomposition for $X$ can be chosen
to be self-dual, a result which is not shown in \cite{dAMS}. In order
to prove Murre's conjectures for such a decomposition (which will be
done in section \ref{Murreconj}), we also want to show that the middle
idempotent factors through a curve.  \medskip

For this purpose, we define $\pi_0$, $\pi_1$, $\pi_2^{tr}$,
$\pi_2^{alg}$ and $\pi_2 := \pi_2^{tr} + \pi_2^{alg}$ the same way we
did in sections \ref{firstproj} and \ref{proj3}. The only difference
with section \ref{proj3} is that we don't define an idempotent
$\pi_3$. We then define $\pi_6 = {}^t\pi_0$, $\pi_5 = {}^t\pi_1$ and
$\pi_4 = {}^t\pi_2$.  As in sections \ref{firstproj} and \ref{proj3},
it is easy to see that these do define the K\"unneth projectors in
homology.

As before, we have $\pi_i \circ \pi_j = 0$ for all $i<j$ not equal to
$3$. These relations make it possible to run the non-commutative
Gram--Schmidt process and to get mutually orthogonal
idempotents $p_0, p_1, p_2, p_4, p_5, p_6$ such that
$(p_i)_*H_*(X)=H_i(X)$ and $p_{6-i} = {}^tp_i$ for all $i \neq 3$.
Setting $p_3 := \Delta_X - \sum_{i\neq 3} p_i$, we thus get a
self-dual Chow--K\"unneth decomposition for $X$.  Again, as before, we
have that $\pi_j \circ \pi_i$ acts trivially on $\CH_*(X_\Omega)$ for
all $j>i$ not equal to $3$. The middle idempotent $p_3$ has thus the
following property.

\begin{proposition} \label{middle-curve}
   There exists a curve $C_1$ such that $(X,p_3)$ is isomorphic to a
  direct summand of $\h_1(C_1)(1)$.
\end{proposition}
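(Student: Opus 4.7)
The plan is to follow the template of the analogous statement for $p_4$ at the end of \S\ref{orthoproj} and then to refine the result. The first step is to show that $(p_3)_*\CH_0(X_\Omega) = 0$. Because $\{p_i\}_{0 \leq i \leq 6}$ is a set of mutually orthogonal idempotents summing to $\Delta_X$, this amounts to $\sum_{i\neq 3}(p_i)_*\CH_0(X_\Omega) = \CH_0(X_\Omega)$. By the last bullet of Theorem \ref{GS}, $(p_i)_*$ agrees with $(\pi_i)_*$ on $\CH_*(X_\Omega)$, so I may work with the original $\pi_i$. Proposition \ref{vanishingchowS} gives $(\pi_0+\pi_1+\pi_2^{tr})_*\CH_0(X_\Omega) = \CH_0(X_\Omega)$, and since $\pi_2^{alg}$ factors through $\mathds{1}(1)$, whose $\CH_0$ vanishes, one also has $(\pi_0+\pi_1+\pi_2)_*\CH_0(X_\Omega) = \CH_0(X_\Omega)$. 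On the other hand ${}^t\pi_0$, ${}^t\pi_1$ and ${}^t\pi_2$ factor through $\mathds{1}(3)$, a summand of $\h_1(C_0)(2)$, and a summand of $\h(S_0)(1)$ respectively; each of these motives has $\CH_0 = 0$ for dimension reasons, so the three transposes annihilate $\CH_0(X_\Omega)$. Combining everything, $(p_3)_*\CH_0(X_\Omega) = 0$.

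Combined with the self-duality $p_3 = {}^tp_3$, which implies $M^\vee(3) \simeq M$ for $M := (X,p_3)$, Corollary \ref{effective-coro} applied with $n = m = 1$ then produces a smooth projective curve $Z$ and an idempotent $q \in \CH_1(Z\times Z)$ such that $(X,p_3)$ is isomorphic to a direct summand of $\h(Z)(1) = \mathds{1}(1) \oplus \h_1(Z)(1) \oplus \mathds{1}(2)$.

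The remaining, and principal, step is to locate this summand inside $\h_1(Z)(1)$ alone. One checks that $\Hom(\mathds{1}(1),\mathds{1}) = 0$ (it equals $\CH_1$ of a point), while $\Hom(\mathds{1}(1),\h_1(Z)) = 0$ and $\Hom(\h_1(Z),\mathds{1}) = 0$ because the Albanese projector $\pi_1^Z$ annihilates the fundamental class $[Z] \in \CH_1(Z)$. These vanishings make $q$ upper triangular in the ordering $(\mathds{1}(1),\h_1(Z),\mathds{1})$, so two applications of Lemma \ref{triangular}, following the pattern of Proposition \ref{albpic}, decompose $(X,p_3) \simeq \im(q_0)(1) \oplus \im(q_1)(1) \oplus \im(q_2)(1)$ with $\im(q_0) \subseteq \mathds{1}$, $\im(q_1) \subseteq \h_1(Z)$, and $\im(q_2) \subseteq \mathds{1}(1)$. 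Because $\End(\mathds{1}(k)) = \Q$ acts faithfully on the one-dimensional cohomology of $\mathds{1}(k)$, and because $(X,p_3)$ has cohomology concentrated in the odd degree $3$, the outer idempotents $q_0$ and $q_2$ must vanish. Hence $(X,p_3) \simeq \im(q_1)(1)$ is a direct summand of $\h_1(Z)(1)$, and taking $C_1 := Z$ gives the claim. The main subtlety beyond the $p_4$ case treated earlier is precisely this refinement: Corollary \ref{effective-coro} only locates $(X,p_3)$ inside $\h(Z)(1)$, and cutting the Tate pieces away requires exploiting both the one-directional morphism structure of $\h(Z)$ for a curve and the rigidity of Tate motives.
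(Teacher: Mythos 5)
Your argument is correct and follows the paper's proof essentially verbatim: establish $\CH_0(X_\Omega,p_3)=0$ as in Proposition \ref{vanishingchow2}, invoke $p_3={}^tp_3$ together with Corollary \ref{effective-coro} to land inside $\h(C_1)(1)$, and then use purity of weight to discard the Tate summands. Your final paragraph simply spells out what the paper compresses into the sentence ``the fact that the motive $(X,p_3)$ is pure of weight $3$ makes it possible to conclude,'' and the details you supply (no morphisms from right to left in $\mathds{1}\oplus\h_1(C_1)\oplus\mathds{1}(1)$, Lemma \ref{triangular}, and faithfulness of $\End(\mathds{1}(k))$ on cohomology) are the intended ones.
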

\begin{proof} As in the proof of Proposition \ref{vanishingchow2}, we
  have $(p_0 + p_1 + p_2)_*\CH_0(X_\Omega) = \CH_0(X_\Omega)$. Therefore,
  $\CH_0(X_\Omega,p_3)=0$.  But $p_3 = {}^tp_3$, so that
  $\CH_0(X_\Omega,{}^tp_3)=0$ too. It follows from Corollary
  \ref{effective-coro} that there exists a curve $C_1$ such that
  $(X,p_3)$ is isomorphic to a direct summand of $\h(C_1)(1)$. The
  fact that the motive $(X,p_3)$ is pure of weight $3$ makes it
  possible to conclude.
 \end{proof}

\section{The motivic Lefschetz conjecture for $X$} \label{motLef}

Let $X$ be a smooth projective variety of dimension $d$ over a field
$k$. Let $i \leq d$ and let $\iota : H \r X$ be a smooth linear
section of dimension $i$ and let $L := (\iota,\id_X)_*\Gamma_\iota =
\Gamma_{\iota} \circ {}^t\Gamma_\iota \in \CH_{i}(X \times X)$. The
correspondence $L$ acts on cohomology or Chow groups as intersecting
$d-i$ times by a smooth hyperplane section of $X$. The variety $X$ is
said to satisfy the motivic Lefschetz conjecture in degree $i$ if
there exist mutually orthogonal idempotents $\pi_i$ and $\pi_{2d-i}$
such that $H_*(X,\pi_i)=H_i(X)$ and $H_*(X,\pi_{2d-i})=H_{2d-i}(X)$
and such that the induced map $$L : (X,\pi_{2d-i}) \r (X,\pi_i,d-i)$$
is an isomorphism of Chow motives.  The variety $X$ is said to satisfy
the \emph{motivic Lefschetz conjecture} if it satisfies the motivic
Lefschetz conjecture in all degrees $<d$.  Note that if $X$ satisfies
the motivic Lefschetz conjecture in degree $i$ then $X$ satisfies the
Lefschetz standard conjecture in degree $i$, i.e. there exists a
correspondence $\Gamma \in \CH^{i}(X \times X)$ such that $\Gamma_* :
H_i(X) \r H_{2d-i}(X)$ is the inverse to $L : H_{2d-i}(X) \r H_i(X)$.
The motivic Lefschetz conjecture for $X$ follows from a combination of
the Lefschetz standard conjecture for $X$ and of Kimura's finite
dimensionality conjecture for $X$ ; it is thus expected to hold for
all smooth projective varieties.

\begin{proposition} \label{point} Let $P$ be a zero-dimensional
  variety over $k$.  Let $p \in \CH_d(X \times X)$ be an idempotent
  such that $(X,p)$ is isomorphic to $\h(P)(i)$ for some integer $i$
  satisfying $2i \leq d$. If the induced map $L : H_{2d-2i}(X,{}^tp)
  \r H_{2i}(X,p)$ is an isomorphism, then $L : (X,{}^tp) \r
  (X,p,d-2i)$ is an isomorphism of Chow motives.
\end{proposition}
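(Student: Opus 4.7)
The plan is to reduce the proposition to the elementary fact that endomorphisms of the Artin motive $\h(P)$ are detected faithfully on homology. My first step is to identify both source and target of $L$ with the same twist of $\h(P)$. Because $P$ is zero-dimensional, $\h(P)$ is canonically self-dual, so, applying the duality convention $(X,p,0)^\vee=(X,{}^tp,-d)$ recalled in the \emph{Notations} paragraph, one obtains
\[
(X,{}^tp) \;\simeq\; (X,p,0)^\vee(d) \;\simeq\; \bigl(\h(P)(i)\bigr)^\vee(d) \;=\; \h(P)(d-i),
\]
while $(X,p,d-2i)\simeq \h(P)(i)(d-2i)=\h(P)(d-i)$ as well. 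Hence, after composing with these isomorphisms, the morphism $L$ corresponds to an endomorphism $\widetilde L$ of $\h(P)(d-i)$, and the proposition reduces to showing that $\widetilde L$ is invertible.

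My second step is to analyse this endomorphism ring. Since Tate twists do not affect morphism groups, $\mathrm{End}\bigl(\h(P)(d-i)\bigr)=\mathrm{End}(\h(P))=\CH_0(P\times P)$. As $P\times P$ is zero-dimensional, every cycle on it is a $\Q$-linear combination of closed points; in particular, rational and homological equivalence both reduce to the identity on cycles, and the cycle class map
\[
\CH_0(P\times P) \longrightarrow H_0(P\times P)
\]
is a bijection. Equipped with the composition structure on both sides, this identifies $\mathrm{End}(\h(P))$ with $\mathrm{End}\bigl(H_*(P)\bigr)$ as $\Q$-algebras; consequently an element of $\mathrm{End}(\h(P))$ is invertible if and only if its image in $\mathrm{End}\bigl(H_*(P)\bigr)$ is.

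To conclude, I observe that under the identifications of the first step, the map $L:H_{2d-2i}(X,{}^tp)\to H_{2i}(X,p)$ translates, up to Tate twist, into the action of $\widetilde L$ on $H_{2(d-i)}\bigl(\h(P)(d-i)\bigr)=H_0(P)$, which by hypothesis is an isomorphism. The second step then implies that $\widetilde L$ is invertible in $\mathrm{End}\bigl(\h(P)(d-i)\bigr)$, and transporting back along the isomorphisms of the first step shows that $L$ itself is an isomorphism of Chow motives, as required. I do not foresee any substantial obstacle: the argument rests on the observation that, for $P$ of dimension zero, there is no room for homological triviality in $\CH_0(P\times P)$, so the realisation functor is bijective on endomorphism rings of $\h(P)$. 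The only real care needed is in the bookkeeping of the Tate twists.
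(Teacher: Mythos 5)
Your argument is essentially the paper's: both proofs transport $L$ along the isomorphisms $(X,{}^tp)\simeq \h(P)(d-i)$ and $(X,p,d-2i)\simeq \h(P)(d-i)$ (in the paper these are realised by the explicit correspondences ${}^tg$ and $g$, so that your $\widetilde L$ is the paper's $g\circ L\circ {}^tg$), and then conclude from the fact that an endomorphism of the Artin motive $\h(P)$ inducing an automorphism of $H_0(P)$ is itself an automorphism. One inaccuracy in your second step: the cycle class map $\CH_0(P\times P)\to H_0(P\times P)$ is injective but not in general bijective over an arbitrary base field --- for $P=\mathrm{Spec}(L)$ with $L/k$ Galois of degree $n$ the source has dimension $n$ while the target has dimension $n^2$ --- so $\End(\h(P))$ is in general only a proper subalgebra of $\End(H_*(P))$, not equal to it. This does not damage the proof: the direction you actually use (invertible image implies invertible) follows from injectivity alone, since an element of a finite-dimensional $\Q$-algebra whose image in some algebra under an injective unital homomorphism is a unit is not a zero-divisor, and a non-zero-divisor in a finite-dimensional algebra is a unit. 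With that justification substituted for the bijectivity claim, your proof is complete and matches the paper's.
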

\begin{proof}
  There exist correspondences $f \in \Hom(\h(P)(i),(X,p))$ and $g \in
  \Hom((X,p),\h(P)(i))$ such that $g\circ f = \id_{\h(P)(i)}$ and $f
  \circ g = p$. The correspondence $g \circ L \circ {}^tg \in
  \End(\h(P))$ induces an automorphism of $H_{0}(P)$ and, hence, is
  itself an automorphism. Therefore, it admits an inverse $\alpha \in
  \End(\h(P))$. It is now straightforward to check that ${}^tp
  \circ {}^tg \circ \alpha \circ g \circ p$ is the inverse of $p \circ
  L \circ {}^tp$.
\end{proof}

\begin{proposition} \label{curve} Let $J$ be an abelian variety over
  $k$.  Let $p \in \CH_d(X \times X)$ be an idempotent such that
  $(X,p)$ is isomorphic to $\h_1(J)(i)$ for some integer $i$
  satisfying $2i+1 \leq d$ and such that $p$ is orthogonal to ${}^tp$
  (this last condition is automatically satisfied if $2i+1 <d-1$). If
  the induced map $L : H_{2d-2i-1}(X,{}^tp) \r H_{2i+1}(X,p)$ is an
  isomorphism, then $L : (X,{}^tp) \r (X,p,d-2i-1)$ is an isomorphism
  of Chow motives.
\end{proposition}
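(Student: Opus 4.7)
The argument mirrors that of Proposition \ref{point}: we use the identification $(X,p) \simeq \h_1(J)(i)$ to transport $L$ to an endomorphism of $\h_1(J)$, and then we invert it by exploiting the algebraic structure of $\End(\h_1(J))$. Concretely, fix correspondences $f \in \Hom(\h_1(J)(i),(X,p))$ and $g \in \Hom((X,p),\h_1(J)(i))$ realizing the given isomorphism, so that $g \circ f = \id_{\h_1(J)(i)}$ and $f \circ g = p$. Taking transposes and using that $\h_1(J)^\vee \simeq \h_1(J)(-1)$, together with the orthogonality $p \perp {}^tp$ which guarantees that the composition lands in the right direct summand, I would form
\[
\phi \ := \ g \circ L \circ {}^tg \ \in \ \End\bigl(\h_1(J)(d-i-1)\bigr) \ = \ \End(\h_1(J)).
\]
By construction, the homological realisation of $\phi$ on $H_1(J,\Q)$ is conjugate, via the isomorphisms induced by $f$ and $g$, to the map $L \colon H_{2d-2i-1}(X,{}^tp) \to H_{2i+1}(X,p)$, which is an isomorphism by hypothesis.

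The crucial input is that in the category of Chow motives (with $\Q$-coefficients) one has a canonical identification $\End(\h_1(J)) \simeq \End^0(J) := \End(J)\otimes\Q$; in particular $\End(\h_1(J))$ is a \emph{finite-dimensional} $\Q$-algebra whose natural action on $H_1(J,\Q)$ is faithful. In any finite-dimensional $\Q$-algebra $A$ equipped with a faithful finite-dimensional representation $\rho \colon A \hookrightarrow \End_\Q(V)$, an element $a \in A$ is invertible if and only if $\rho(a)$ is: if $\rho(a)$ is bijective and $ab = 0$ in $A$, then $\rho(a)\rho(b)=0$ forces $\rho(b)=0$ and hence $b=0$ by faithfulness, so left multiplication by $a$ on $A$ is injective and therefore, by finite-dimensionality, bijective; a symmetric argument handles right multiplication. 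Applying this to $\phi$ yields a two-sided inverse $\alpha \in \End(\h_1(J))$.

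To conclude, as in the proof of Proposition \ref{point} I would set
\[
\beta \ := \ {}^tp \circ {}^tg \circ \alpha \circ g \circ p \ \in \ \Hom\bigl((X,p,d-2i-1),(X,{}^tp)\bigr)
\]
and verify by direct manipulation, using $g = g \circ p$, $L = p \circ L \circ {}^tp$ and $\alpha \circ \phi = \phi \circ \alpha = \id_{\h_1(J)}$, that $L \circ \beta = p$ and $\beta \circ L = {}^tp$, i.e.\ $\beta$ is the inverse of $L$ in the category of Chow motives.

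The only step beyond formal book-keeping is the identification $\End(\h_1(J)) \simeq \End^0(J)$ together with the faithfulness of its action on $H_1(J,\Q)$; this is the main obstacle but it is a classical fact in the theory of motives of abelian varieties (going back to work of Deninger--Murre and K\"unnemann), so once it is invoked the rest of the proof is a direct extension of the zero-dimensional case.
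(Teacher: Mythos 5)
Your proposal is correct and follows essentially the same route as the paper: fix $f,g$ realizing $(X,p)\simeq \h_1(J)(i)$, transport $L$ to $\phi = g\circ L\circ {}^tg \in \End(\h_1(J)) = \End_k(J)\otimes\Q$, invert it there, and conjugate back via ${}^tp\circ{}^tg\circ\alpha\circ g\circ p$. The only (harmless) difference is how invertibility of $\phi$ is justified: the paper quotes the classical fact that an endomorphism of $J$ inducing an isomorphism on $H_1$ is an isogeny, while you derive it from the general statement that in a finite-dimensional $\Q$-algebra with a faithful representation an element is invertible iff its image is — both arguments are valid and rest on the same identification $\End(\h_1(J))\simeq\End_k(J)\otimes\Q$.
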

\begin{proof}
  There exist correspondences $f \in \Hom(\h_1(J)(i),(X,p))$ and $g
  \in \Hom((X,p),\h_1(J)(i))$ such that $g\circ f = \id_{\h_1(J)(i)}$
  and $f \circ g = p$. The correspondence $g \circ L \circ {}^tg \in
  \End(\h_1(J))$ induces an automorphism of $H_{1}(J)$ and, hence, is
  itself an automorphism (indeed by \cite[Prop. 4.5]{Scholl} we have
  $\End(\h_1(J)) = \End_k(J) \otimes \Q$ and it is well-known that a
  map between abelian varieties which induces an isomorphism in degree
  one homology must be an isogeny). Therefore, it admits an inverse
  $\alpha \in \End(\h_1(J))$. It is now straightforward to check that
  ${}^tp \circ {}^tg \circ \alpha \circ g \circ p$ is the inverse of
  $p \circ L \circ {}^tp$.
\end{proof}

As already proved by Scholl \cite{Scholl}, every smooth projective
variety satisfies the motivic Lefschetz conjecture in degrees $\leq
1$.

\begin{theorem}
  Let $f : X \r S$ be a dominant morphism defined over a field $k$
  from a smooth projective variety $X$ to a smooth projective surface
  $S$ such that the general fibre of $f_\Omega$ has trivial Chow group
  of zero-cycles.  Then $X$ satisfies the motivic Lefschetz
  conjecture in degrees $\leq 3$.  In particular, if $X$ has dimension
  $\leq 4$, then $X$ satisfies the motivic Lefschetz conjecture and
  hence the Lefschetz standard conjecture.
\end{theorem}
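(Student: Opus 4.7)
The plan is to take the self-dual Chow--K\"unneth decomposition $\{p_0,\ldots,p_{2d}\}$ produced by Theorem \ref{theoremCK} and verify the motivic Lefschetz conjecture summand by summand, exploiting the geometric factorisations of each $p_i$ through a low-dimensional variety. The classical hard Lefschetz theorem supplies the cohomological isomorphism that Propositions \ref{point} and \ref{curve} take as input; the work is then to promote these cohomological isomorphisms to the level of Chow motives.

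First I would treat degrees $0$ and $1$. Since $(X,p_0)\simeq \h(P_0)$ for a zero-dimensional $P_0$, Proposition \ref{point} with $i=0$ yields the isomorphism $L^{d}:(X,{}^tp_0)\to (X,p_0,d)$. Since $(X,p_1)$ is a direct summand of $\h_1(C_0)$, it is isomorphic to $\h_1(J)$ for an abelian variety $J$ (an isogeny factor of $\mathrm{Jac}(C_0)$), so Proposition \ref{curve} with $i=0$ applies. Degree $3$ is handled identically with $i=1$ in Proposition \ref{curve}, since $(X,p_3)$ is a direct summand of $\h_1(C_1)(1)$.

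Degree $2$ requires an extra step because $(X,p_2)$ only factors through a surface. I would split $p_2 = p_2^{\mathrm{alg}}+p_2^{tr}$ as in \S\ref{proj3}. The algebraic summand $(X,p_2^{\mathrm{alg}})$ is isomorphic to $\h(P)(1)$ for some zero-dimensional $P$, so Proposition \ref{point} with $i=1$ handles it. For the transcendental summand, I would observe that the correspondences $\Gamma\circ \pi_2^{tr,S}$ and $\pi_2^{tr,S}\circ\Gamma_f$ are mutually inverse isomorphisms $(X,\pi_2^{tr})\simeq (S,\pi_2^{tr,S})$ (this uses $\Gamma_f\circ\Gamma=\Delta_S$ from Proposition \ref{surjective}). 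Composing with the isomorphism $(X,p_2^{tr})\simeq (X,\pi_2^{tr})$ from Theorem \ref{GS}, and using self-duality of the decomposition to treat ${}^tp_2^{tr}$, the desired motivic Lefschetz isomorphism on $X$ becomes equivalent to the corresponding statement for $(S,\pi_2^{tr,S})$. But $\dim S=2$, so the Lefschetz operator for this summand of $S$ in degree $2$ is essentially the identity; using the cohomological hard Lefschetz on the $p_2^{tr}$-summand of $H_{2d-2}(X)$ to construct a candidate two-sided inverse --- exactly as in the proofs of Propositions \ref{point} and \ref{curve} --- gives $L_X^{d-2}:(X,{}^tp_2^{tr})\to (X,p_2^{tr},d-2)$ as an isomorphism of Chow motives.

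Assembling the isomorphisms across $p_0, p_1, p_2^{\mathrm{alg}}, p_2^{tr}, p_3$ yields the motivic Lefschetz conjecture in each degree $i\leq 3$. Since $d\leq 4$ this exhausts all degrees $<d$, hence gives the motivic Lefschetz conjecture for $X$ and, by definition, the Lefschetz standard conjecture. I expect the transcendental degree-$2$ case to be the main obstacle: one has to match the twisted operator $L_X^{d-2}$ on $X$ with the trivial operator $L_S^0$ on $S$ through the isomorphism $(X,p_2^{tr})\simeq (S,\pi_2^{tr,S})$, and this bookkeeping has no analogue in the point/curve cases, which reduce essentially verbatim to Propositions \ref{point} and \ref{curve}.
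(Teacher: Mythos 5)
Your overall architecture matches the paper's: degrees $0$, $1$, $3$ via Propositions \ref{point} and \ref{curve} combined with hard Lefschetz, and degree $2$ by splitting into algebraic and transcendental summands and reducing the transcendental one to $S$. The gap sits exactly at the step you flag as the ``main obstacle'', and it is more than bookkeeping. The template of Propositions \ref{point} and \ref{curve} hinges on the fact that an endomorphism of $\h(P)$ (resp.\ of $\h_1(J)$) inducing an automorphism on cohomology is itself an automorphism of the Chow motive; this is special to Artin motives and to $\h_1$'s of abelian varieties. The analogous statement for $\End(S,\pi_2^{tr,S})$ is not known unconditionally --- it would follow from Kimura finite-dimensionality of $S$, which is not assumed --- so ``construct a candidate two-sided inverse exactly as in the proofs of Propositions \ref{point} and \ref{curve}'' does not go through for the transcendental summand. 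The paper's substitute is an explicit geometric identity: writing $L=\Gamma_\iota\circ{}^t\Gamma_\iota$ for a linear section $\iota : H \hookrightarrow X$ of dimension $2$, Proposition \ref{surjective} applied to $\pi=f\circ\iota : H \to S$ gives $\Gamma_f\circ L\circ{}^t\Gamma_f=m\cdot\Delta_S$ for some $m\neq 0$, whence the induced endomorphism of $(S,\pi_2^{tr,S})$ is literally $m$ times the identity and $\frac{1}{m}\cdot{}^t\pi_2^{tr}\circ{}^t\Gamma_f\circ\Gamma_f\circ\pi_2^{tr}$ is an explicit inverse of $\pi_2^{tr}\circ L\circ{}^t\pi_2^{tr}$. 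You cite $\Gamma_f\circ\Gamma=\Delta_S$, but that identity alone does not control $L$; the needed input is the projection-formula identity above.

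Two smaller points. First, to apply Proposition \ref{point} to the algebraic summand you need $L : H_{2d-2}(X,{}^t\pi_2^{alg})\to H_2(X,\pi_2^{alg})$ to be an isomorphism; hard Lefschetz only gives this on all of $H_2(X)$, not on the summand, so the transcendental case must be established first and the algebraic case deduced from it --- the order in your sketch is reversed. Second, having shown $\pi_2\circ L\circ{}^t\pi_2$ is an isomorphism, transporting along $(X,p_2)\simeq(X,\pi_2)$ from Theorem \ref{GS} produces an isomorphism induced by $p_2\circ\pi_2\circ L\circ{}^t\pi_2\circ{}^tp_2$, not by $L$ itself; the paper needs the further observation that $p_2\circ\pi_2=p_2$ (or the vanishing of $\Hom((X,{}^t\pi_2),(X,\pi_r,d-2))$ for $r=0,1$) to conclude that $L$ induces the required isomorphism $(X,{}^tp_2)\to(X,p_2,d-2)$. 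Your sketch passes over both issues.
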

\begin{proof}
  By Theorem \ref{theoremCK}, $p_0$ factors through a
  point, and $p_1$ and $p_3$ factor through the $\h_1$ of a curve. The
  hard Lefschetz theorem says that the map $H_{2d-i}(X) \r H_i(X)$
  induced by intersecting $d-i$ times with a smooth hyperplane section
  is an isomorphism.  Therefore, the two propositions above give the
  motivic Lefschetz conjecture in degrees $0$, $1$ and $3$ for
  $X$.\medskip

  Let $\pi_2^{tr}$ be the idempotent of section 4.1. Let's prove that
  $L : (X,{}^t\pi_2^{tr},0) \r (X, \pi_2^{tr}, d-2)$ is an isomorphism
  of Chow motives. Because $\iota : H \r X$ is a linear section of $X$
  of dimension $2$, Proposition \ref{surjective} gives a non-zero
  integer $m$ such that $\Gamma_f \circ L \circ {}^t\Gamma_f = m \cdot
  \Delta_S$. It is then straightforward to check that $\frac{1}{m}
  \cdot {}^t\pi_2^{tr} \circ {}^t\Gamma_f \circ \Gamma_f \circ
  \pi_2^{tr}$ is the inverse of $ \pi_2^{tr} \circ L \circ {}^t
  \pi_2^{tr}$.

  Let $\pi_2^{alg}$ be the idempotent of section 4.3.  Because $L :
  (X,{}^t\pi_2^{tr},0) \r (X, \pi_2^{tr}, d-2)$ is an isomorphism and
  because $L_* : H_{2d-2}(X) \r H_2(X)$ is an isomorphism by the hard
  Lefschetz theorem, we see that $L$ induces an isomorphism $L :
  H_{2d-2}(X,{}^t\pi_2^{alg}) \r H_2(X,\pi_2^{alg})$. Proposition
  \ref{point} then shows that $ \pi_2^{alg} \circ L \circ {}^t
  \pi_2^{alg} \in \Hom((X,{}^t\pi_2^{alg}),(X,\pi_2^{alg},d-2))$ is an
  isomorphism.

  We have thus showed that $ \pi_2 \circ L \circ {}^t \pi_2 \in
  \Hom((X,{}^t\pi_2),(X,\pi_2,d-2))$ is an isomorphism. Since, by
  Theorem \ref{GS}, we know that $(X,p_2,d-2)\simeq (X,\pi_2,d-2)$ and
  $(X,{}^tp_2) \simeq (X,{}^t\pi_2)$, we get that $(X,p_2,d-2)$ is
  isomorphic to $(X,{}^tp_2)$. However, the isomorphism is induced by
  $p_2 \circ \pi_2 \circ L \circ {}^t \pi_2 \circ {}^t p_2 $ which is
  not quite the isomorphism we were aiming at.  \medskip

  By Remark \ref{missingorth}, it can be checked that in our
  particular setting we have $p_2 \circ \pi_2 = p_2$ so that $ p_2
  \circ L \circ {}^t p_2$ is an isomorphism with inverse $\frac{1}{m}
  \cdot {}^tp_2 \circ {}^t\pi_2 \circ {}^t\Gamma_f \circ \Gamma_f
  \circ \pi_2 \circ p_2$.

  Let's however give another proof that $ p_2 \circ L \circ {}^t p_2$
  is an isomorphism that might be useful in other situations.  We can
  conclude that $ p_2 \circ L \circ {}^t p_2$ is an isomorphism if we
  can show that it is equal to $ p_2 \circ \pi_2 \circ L \circ {}^t
  \pi_2 \circ {}^t p_2$. For this purpose, after examining the formula
  of Lemma \ref{linalg} defining $p_2$, it is enough to check that, for
  all correspondences $\alpha \in \CH_{2}(X \times X)$, we have $\pi_r
  \circ \alpha \circ {}^t \pi_2 = 0$ for $r = 0,1$. This is recorded
  in the lemma below.
\end{proof}

\begin{lemma}
  $\Hom ((X,{}^t\pi_2),(X,\pi_r,d-2)) = 0$ for $r=0$ or $1$.
\end{lemma}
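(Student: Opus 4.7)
My plan is to unfold the Hom as $\pi_r \circ \CH_2(X \times X) \circ {}^t\pi_2$ and show it vanishes. Splitting $\pi_2 = \pi_2^{tr} + \pi_2^{alg}$, the two summands have different geometric flavours and will be handled separately.

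For the algebraic piece I would use $(X,\pi_2^{alg}) \simeq \mathds{1}(1)^{\oplus n}$, from the construction in \S\ref{proj3}, which gives $(X,{}^t\pi_2^{alg}) \simeq \mathds{1}(d-1)^{\oplus n}$. Theorem \ref{theoremCK} identifies the target: $(X,\pi_0,d-2)$ is a power of $\mathds{1}(d-2)$, while $(X,\pi_1,d-2)$ is a direct summand of $\h_1(C_0)(d-2)$. Thus $\Hom(\mathds{1}(d-1),\mathds{1}(d-2)) = \CH_1(\mathrm{pt}) = 0$ settles $r=0$, and for $r=1$ the Hom embeds into $\Hom(\mathds{1}(d-1),\h_1(C_0)(d-2)) = (\pi_1^{C_0})_*\CH_1(C_0)$, which vanishes since the Picard projector of a smooth projective curve annihilates the fundamental class (it is orthogonal to $\pi_2^{C_0}$, which picks out $\CH_1(C_0)$).

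For the transcendental piece I would mimic the proof of Proposition \ref{pi2vanish}. The identification $(X,\pi_2^{tr}) \simeq (S,\pi_2^{tr,S})$ coming from $\Gamma_f \circ \Gamma = \Delta_S$ shows that $\pi_r \circ \alpha \circ {}^t\pi_2^{tr}$ is governed by a correspondence $\gamma \circ \pi_2^{tr,S}$ (with $\gamma$ on $S \times Z_r$, and further composed with $\pi_1^{C_0}$ on the left when $r=1$). For $r=0$ this lies in $\CH^0(S \times P_0)$; because ${}^t\pi_2^{tr,S}$ sends every zero-cycle into the Albanese kernel of $\CH_0(S)$ while top-dimensional correspondences act by degree on zero-cycles, the composition acts trivially on zero-cycles and Lemma \ref{trivialaction} gives the vanishing. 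For $r=1$ the correspondence $\delta' := \pi_1^{C_0} \circ \gamma \circ {}^t\pi_2^{tr,S}$ lies in $\CH^1(S \times C_0)$, and I would invoke Lemma \ref{2way}. In the $S \to C_0$ direction, ${}^t\pi_2^{tr,S}$ lands in the Albanese kernel of $\CH_0(S)$, on which the decomposition $\mathrm{Pic}(S \times C_0) = \mathrm{Pic}(S) \oplus \mathrm{Pic}(C_0) \oplus \mathrm{Hom}(\mathrm{Alb}_S,\mathrm{Jac}_{C_0}) \otimes \Q$ (used in the proof of Lemma \ref{2way}) shows every component of $\gamma$ acts trivially; in the transpose $C_0 \to S$ direction, $(\pi_1^{C_0})_*$ lands in $\mathrm{Jac}(C_0) \otimes \Q$, whose image under ${}^t\gamma$ lies in $\mathrm{Pic}^0(S) \otimes \Q \subset \CH_1(S)$, and $\pi_2^{tr,S}$ acts trivially on $\CH_1(S_\Omega)$ by construction.

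The principal obstacle is the $r = 1$ transcendental case, since it requires verifying \emph{both} directions of Lemma \ref{2way}'s hypothesis, each being a careful bookkeeping exercise with the Albanese/transcendental pieces of the Chow groups on $S$ and $C_0$. Once one identifies which projector sends what into which piece, the vanishing falls out; but it is not just a dimension count, in contrast to the cleaner $r = 0$ argument and in contrast to the algebraic-piece reduction to Hom computations between Tate and $\h_1$-type motives.
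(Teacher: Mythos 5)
Your proof is correct and follows essentially the same route as the paper's: split $\pi_2$ into its algebraic and transcendental parts, dispose of the algebraic part by a dimension/degree computation, and handle the transcendental part via Lemma \ref{trivialaction} for $r=0$ and Lemma \ref{2way} for $r=1$. Your explicit verification of both hypotheses of Lemma \ref{2way} via the decomposition of $\mathrm{Pic}(S\times C_0)$ is just a spelled-out version of the paper's appeal to the functoriality of the Abel--Jacobi map, since that decomposition is exactly what the proof of Lemma \ref{2way} uses.
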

\begin{proof}
  When $r=0$, $\pi_0 \circ \alpha \circ {}^t \pi_2^{alg}$ factors
  through a correspondence $\gamma \in \CH_1(P_1 \times P_0)$ for some
  zero-dimensional $P'$ and is thus zero for dimension reasons and
  $\pi_0 \circ \alpha \circ {}^t \pi_2^{tr}$ factors through a
  correspondence $\gamma \in \CH^0(S \times P_0)$ with $\gamma^* z =0$
  for any $z \in \CH_0(P)$.  Lemma \ref{trivialaction} then shows that
  $\gamma =0$ and hence $\pi_0 \circ \alpha \circ {}^t \pi_2^{tr} =
  0$.  When $r=1$, on the one hand, we have that $\pi_1 \circ \alpha
  \circ {}^t\pi_2^{alg}$ factors through a correspondence $\gamma \in
  \CH^0(P_1 \times C_0)$ with $\gamma^* z =0$ for any zero-cycle $z$ on
  $C_0$.  Lemma \ref{trivialaction} then shows that $\gamma =0$ and
  hence $\pi_1 \circ \alpha \circ {}^t\pi_2^{alg} = 0$. On the other
  hand, $\pi_1 \circ \alpha \circ {}^t\pi_2^{tr}$ factors through a
  correspondence $\gamma \in \CH^1(S \times C_0)$ with $\gamma^* z =0$
  for any zero-cycle $z$ on $(C_0)_\Omega$ and $\gamma_* z'$ for any
  zero-cycle $z'$ on $S_\Omega$ by functoriality of the Abel--Jacobi
  map.  Therefore thanks to Lemma \ref{2way}, we get $\gamma=0$ and
  hence $\pi_1 \circ \alpha \circ {}^t\pi_2^{tr}=0$.
\end{proof}

\begin{remark}
  The results of this section actually show that, for $X$ as in the
  theorem above and for the idempotents $p_i$ constructed in \S
  \ref{CK}, the map $L : (X,p_{2d-i}) \r (X,p_i,d-i)$ is an isomorphism
  for $i \leq 3$ for any choice of a smooth linear section $\iota : H
  \hookrightarrow X$ of dimension $i$.
\end{remark}

\section{Murre's conjectures for $X$} \label{Murreconj}

As shown by Jannsen \cite{Jannsen}, Murre's conjectures \cite{Murre1}
are equivalent to Bloch and Beilinson's. Let's recall them : \medskip

(A) $X$ has a Chow--K\"unneth decomposition $\{\pi_0, \ldots,
\pi_{2d}\}$ : There exist mutually orthogonal idempotents $\pi_0,
\ldots, \pi_{2d} \in \CH_d(X \times X)$ adding to the identity such
that $(\pi_i)_*H_*(X)=H_i(X)$ for all $i$.

(B) $\pi_0, \ldots, \pi_{2l-1},\pi_{d+l+1}, \ldots, \pi_{2d}$ act
trivially on $\CH_l(X)$ for all $l$.

(C) $F^i\CH_l(X) := \ker(\pi_{2l}) \cap \ldots \cap \ker(\pi_{2l+i-1})$
doesn't depend on the choice of the $\pi_j$'s. Here the $\pi_j$'s are
acting on $\CH_l(X)$.

(D) $F^1\CH_l(X) = \CH_l(X)_\hom$. \\

Before we consider Murre's conjectures for $X$ as in Theorem
\ref{mainth}, let's consider the following situation. Let $\Pi \in
\CH_d(X\times X) = \End(\h(X))$ be an idempotent which factors as
$$\h(X) \stackrel{g}{\longrightarrow} \h(Y)
\stackrel{f}{\longrightarrow} \h(X)$$ where $Y$ is a smooth projective
variety of dimension $\leq l+1$. (Actually, up to replacing $Y$ with
$Y \times \P^{l+1-\dim Y}$, we can assume $\dim Y =l+1$.) The
arguments in the proof of Propositions \ref{MurreB} and \ref{MurreC}
below are essentially contained in \cite{Vial2} and \cite{Vial3}.

\begin{proposition} \label{MurreB} Let $\Pi$ be as above.

  $\bullet$ If $\Pi_*H_{2l+1}(X) = 0$, then $\Pi$ acts trivially on
  $\CH_l(X)_\hom$.

  $\bullet$ If, moreover, $\Pi_*H_{2l}(X) = 0$, then $\Pi$ acts
  trivially on $\CH_l(X)$.
\end{proposition}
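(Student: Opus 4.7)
The plan is to exploit the factorization $\Pi = f \circ g$ to replace the motive $(X,\Pi)$ by an isomorphic direct summand of $\h(Y)$, and then to use the fact that, for $\dim Y \leq l+1$, the action of a correspondence on $\CH_l(Y)_\hom$ is detected by its action on $H_{2l+1}(Y)$. Concretely, I would first set $e := g \circ \Pi \circ f \in \CH_{\dim Y}(Y \times Y)$. Using $\Pi = f \circ g$ and $\Pi^2 = \Pi$, a direct calculation gives $e \circ e = g \circ \Pi \circ (f \circ g) \circ \Pi \circ f = g \circ \Pi^{3} \circ f = e$, so $e$ is an idempotent; moreover the morphisms $e \circ g \circ \Pi$ and $\Pi \circ f \circ e$ are mutually inverse isomorphisms between $(X,\Pi)$ and $(Y,e)$. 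Under this isomorphism Chow groups and homology are identified, so the hypothesis $\Pi_* H_{2l+1}(X) = 0$ translates to $e_* H_{2l+1}(Y) = 0$, and it suffices to prove $e_* \CH_l(Y)_\hom = 0$.

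For this the dimension hypothesis on $Y$ is used. If $\dim Y \leq l$ then $\CH_l(Y)$ is a finite-dimensional $\Q$-vector space injecting into $H_{2l}(Y)$, hence $\CH_l(Y)_\hom = 0$ and there is nothing to prove. If $\dim Y = l+1$ then $\CH_l(Y) = \CH^{1}(Y) = \mathrm{Pic}(Y) \otimes \Q$ and $\CH_l(Y)_\hom$ is identified with $\mathrm{Pic}^0(Y) \otimes \Q$. The correspondence $e$ acts on $\mathrm{Pic}^0(Y)$ as an endomorphism of the abelian variety, and the classical fact that $\End(\mathrm{Pic}^0(Y)) \otimes \Q$ embeds into $\End_{\Q}(H^1(Y,\Q))$ via the action on rational first homology forces this endomorphism to vanish, since $e_* H^1(Y) = e_* H_{2l+1}(Y) = 0$ by Poincar\'e duality on $Y$. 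Thus $e$ acts as zero on $\mathrm{Pic}^0(Y) \otimes \Q$, and the first assertion follows.

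The second assertion is then formal. If additionally $\Pi_* H_{2l}(X) = 0$, then for any $x \in \CH_l(X)$ the cycle class of $\Pi_* x$ in $H_{2l}(X)$ equals $\Pi_* [x] = 0$, so $\Pi_* x$ already lies in $\CH_l(X)_\hom$; applying the first assertion to $\Pi_* x$ yields $\Pi_*(\Pi_* x) = 0$, and since $\Pi^{2} = \Pi$ this gives $\Pi_* x = 0$. The main obstacle to handle carefully is the input used in the second paragraph --- namely the faithfulness of the action of $\End(\mathrm{Pic}^0(Y)) \otimes \Q$ on rational first cohomology together with the compatibility of this action with the one induced by $e$ viewed as a correspondence --- but this is standard and is precisely the kind of statement packaged in the cited references \cite{Vial2, Vial3}.
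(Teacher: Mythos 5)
Your argument is correct and follows essentially the same route as the paper: both reduce, via the factorization through $\h(Y)$ with $\dim Y\leq l+1$, to the fact that a correspondence killing $H^1(Y)$ acts trivially on $\mathrm{Pic}^0(Y)\otimes\Q=\CH_l(Y)_\hom$, and both deduce the second bullet formally from the first. The only (harmless) difference is that you package the reduction through the honest idempotent $e=g\circ\Pi\circ f$ and the isomorphism $(X,\Pi)\simeq(Y,e)$, where the paper works directly with $g\circ f$ (implicitly using $\Pi=\Pi^3$).
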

\begin{proof} Because $\Pi$ is an idempotent, we see that $g \circ f$
  acts trivially on $H^1(Y)$. Therefore, $g \circ f$ acts trivially on
  $\CH^1(Y)_\hom$. Thus, $\Pi$ acts trivially on $\CH_l(X)_\hom$. If,
  moreover, $\Pi_*H_{2i}(X) = 0$, then $g \circ f$ acts trivially on
  $\CH^1(Y)$. Thus, $\Pi$ acts trivially on $\CH_l(X)$.
\end{proof}

\begin{proposition} \label{MurreC} Let $\Pi$ be as above.

  $\bullet$ If $\Pi_*H_{*}(X) = H_{2l}(X)$, then $\ker \big( \Pi_* :
  \CH_l(X) \r \CH_l(X)\big) = \CH_l(X)_\hom$.

  $\bullet$ Assume that $k \subseteq \C$. If $\Pi_*H_{*}(X) =
  H_{2l+1}(X)$, then $\ker \big( \Pi_* : \CH_l(X)_\hom \r \CH_l(X)_\hom
  \big) = \ker \big( AJ_l : \CH_l(X)_\hom \r J_l(X) \otimes \Q \big)$.
  Here $AJ_l$ is the Abel--Jacobi map.
\end{proposition}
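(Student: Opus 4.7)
The plan is to use the factorisation $\Pi = f\circ g$ through $\h(Y)$ with $\dim Y \le l+1$, combined with functoriality of the cycle class map and of the Abel--Jacobi map. A common preliminary for both statements is that, since $\Pi_*$ is idempotent on $H_*(X)$ with prescribed image, it acts as the identity on $H_{2l}(X)$ in the first case and on $H_{2l+1}(X)$ in the second, and vanishes in all other degrees.

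For the first bullet, I would deduce the inclusion $\CH_l(X)_\hom \subseteq \ker(\Pi_*)$ directly from the first part of Proposition \ref{MurreB}, since $\Pi_*H_{2l+1}(X)=0$. For the reverse inclusion, the argument is formal: if $\Pi_*z=0$, then applying the cycle class map and using that $\Pi_*$ acts as the identity on $H_{2l}(X)$ gives $\mathrm{cl}(z) = \Pi_*\mathrm{cl}(z) = \mathrm{cl}(\Pi_*z) = 0$, so $z \in \CH_l(X)_\hom$.

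For the second bullet, I would first assume $\dim Y = l+1$ (replacing $Y$ by $Y \times \P^{l+1-\dim Y}$ if necessary), so that $g_*$ maps $\CH_l(X)_\hom$ into $\CH^1(Y)_\hom = \mathrm{Pic}^0(Y)\otimes\Q$, a group on which the Abel--Jacobi map is the tautological isomorphism to $J_1(Y)\otimes\Q$. For the containment of $\ker(AJ_l)$ in $\ker(\Pi_*|_{\CH_l(X)_\hom})$, functoriality of $AJ$ gives $AJ^Y_1(g_*z) = g_*AJ_l(z)$, so $AJ_l(z)=0$ forces $g_*z=0$ and hence $\Pi_*z = f_*g_*z = 0$. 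For the reverse inclusion, I would use that $\Pi$ acts as the identity on $H^{2l+1}(X,\Q)$ and, by compatibility of correspondences with the Hodge structure on $H^{2l+1}$ together with the functorial description of $J_l(X)\otimes\Q$, also as the identity on $J_l(X)\otimes\Q$; then $AJ_l(z) = \Pi_*AJ_l(z) = AJ_l(\Pi_*z) = 0$ whenever $\Pi_*z=0$. The only non-bookkeeping step, and the one requiring $k\subseteq\C$, is this last claim that $\Pi$ acts as the identity on $J_l(X)\otimes\Q$; this is where I would have to be careful, invoking the Hodge-theoretic functoriality of correspondences rather than a purely motivic argument.
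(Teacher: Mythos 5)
Your proof is correct and follows essentially the same route as the paper's: the first bullet via Proposition \ref{MurreB} plus functoriality of the cycle class map, and the second via the factorisation $\CH_l(X)_\hom \to \CH^1(Y)_\hom \to \CH_l(X)_\hom$ compared under $AJ$ with the isomorphism $\CH^1(Y)_\hom \simeq \mathrm{Pic}^0(Y)\otimes\Q$ and the fact that a correspondence acting as the identity on $H_{2l+1}(X)$ acts as the identity on $J_l(X)\otimes\Q$. The paper merely packages your two displayed computations into a single commutative diagram and a diagram chase.
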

\begin{proof} In the first case, the inclusion $\subseteq$ follows
  immediately from the functoriality of the cycle class map with
  respect to the action of correspondences. The reverse inclusion
  $\supseteq$ follows from the first point of Proposition
  \ref{MurreB}.

  In the second case, we consider the Abel--Jacobi map $AJ_l :
  \CH_l(X)_\hom \r J_l(X) \otimes \Q$ instead of the cycle class map
  $\CH_l(X) \r H_{2l}(X)$. The Abel--Jacobi map is functorial with
  respect to the action of correspondences and, if a correspondence
  $\alpha \in \End(\h(X))$ induces the identity on $H_{2l+1}(X)$, then
  $\alpha_*$ induces the identity on $J_l(X) \otimes \Q$. This yields
  a commutative diagram
  \begin{center} $ \xymatrix{ \CH_l(X)_\hom \ar[d]^{AJ_l} \ar[r]^{g_*}
      & \CH^1(Y)_\hom \ar[d] \ar[r]^{f_*} & \CH_l(X)_\hom
      \ar[d]^{AJ_l}  \\
      J_{l}(X)(\C)\otimes \Q \ar[r] & \mathrm{Pic}^0_{Y}(\C)\otimes \Q
      \ar[r] & J_{l}(X)(\C)\otimes \Q.}$ \end{center} where the
  composite of the two bottom arrows is the identity and where the
  middle vertical arrow is injective. It is then straightforward to
  conclude by a simple diagram chase.
  \end{proof}

\begin{definition} \label{special} A smooth projective variety $X$ of
  dimension $d$ is said to have a \emph{special} Chow--K\"unneth
  decomposition $\{\pi_i\}_{0 \leq i \leq 2d}$ if, for all $i$,

  $\bullet$ $\pi_{2i}$ factors through a surface, i.e.  there is a
  surface $S_i$ such that $(X,\pi_{2i})$ is a direct summand of
  $\h(S_i)(i-1)$.

  $\bullet$ $\pi_{2i+1}$ factors through a curve, i.e.  there is a
  curve $C_i$ such that $(X,\pi_{2i+1})$ is a direct summand of
  $\h_1(C_i)(i)$.
\end{definition}

\begin{proposition} \label{specialMurre} Let $X$ be a smooth
  projective variety that has a special Chow--K\"unneth decomposition.
  Then homological and algebraic equivalence agree on $X$, and $X$
  satisfies Murre's conjectures (A), (B) and (D).  Moreover, if $ k
  \subseteq \C$, then the filtration $F$ on $\CH_l(X)$ does not depend
  on the choice of a special Chow--K\"unneth decomposition for $X$.
\end{proposition}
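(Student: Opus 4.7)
The plan is to treat the four assertions in sequence, relying on Propositions~\ref{MurreB} and~\ref{MurreC} together with a direct analysis of each projector via its factorization through a curve or surface. Throughout, the factorization of $\pi_{2l}$ (resp.~$\pi_{2l+1}$) through $\h(S_l)(l-1)$ (resp.~$\h_1(C_l)(l)$) can be converted into a factorization through $\h(Y)$ for some smooth projective variety $Y$ of dimension $\leq l+1$ by embedding $\h(S_l)(l-1)\subseteq\h(S_l\times\P^{l-1})$ (and similarly for $\pi_{2l+1}$), using the projective-bundle decomposition; this brings us within the scope of Propositions~\ref{MurreB} and~\ref{MurreC}.

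To establish that homological and algebraic equivalence agree on $X$: on curves and surfaces the two relations coincide in every codimension (for divisors, $\mathrm{Pic}^0\otimes\Q$ exhausts both $\CH^1_\hom$ and $\CH^1_\alg$; top-dimensional cycles are trivial; and degree-$0$ zero-cycles on a connected projective surface are automatically algebraically trivial). Since each $(X,\pi_j)$ is a direct summand of $\h(Y)(a)$ with $\dim Y\leq 2$, and since correspondences preserve both equivalence relations, summing $\CH_l(X)=\bigoplus_j(\pi_j)_*\CH_l(X)$ yields $\CH_l(X)_\hom=\CH_l(X)_\alg$.

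For Murre's conjectures (A), (B) and (D), I would directly compute $(\pi_j)_*\CH_l(X)$ from the factorization. When $\pi_{2i+1}$ is a direct summand of $\h_1(C_i)(i)$, its image embeds in $(\pi_1^{C_i})_*\CH_{l-i}(C_i)$, which vanishes unless $l=i$ (using $(\pi_1^C)_*\CH_1(C)=0$ and $(\pi_1^C)_*\CH_0(C)=\mathrm{Pic}^0(C)\otimes\Q$). When $\pi_{2i}$ is a direct summand of $\h(S_i)(i-1)$, cohomological concentration in degree $2i$ places $(X,\pi_{2i})$ inside $\h_2(S_i)(i-1)$; the associated idempotent $q$ on $S_i$ kills $H_k(S_i)$ for $k\neq 2$, hence annihilates $\mathrm{Pic}^0(S_i)\otimes\Q$ (via Abel--Jacobi for divisors) and the fundamental classes, forcing $(\pi_{2i})_*\CH_l(X)$ to be nonzero only for $l\in\{i-1,i\}$. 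Combined, $(\pi_j)_*\CH_l(X)=0$ whenever $j\notin\{2l,2l+1,2l+2\}$; the sole boundary case in which this is weaker than required by (B), namely $j=2d$ acting on $\CH_{d-1}(X)$, is handled directly: $(\pi_{2d})_*\CH_{d-1}(X)$ has cycle class in $(\pi_{2d})_*H_{2d-2}(X)=0$, so it lies in $\mathrm{Pic}^0(X)\otimes\Q$, whose Abel--Jacobi image lies in $(\pi_{2d})_*H_{2d-1}(X)=0$ and hence vanishes. For (D), Proposition~\ref{MurreC} applied to $\Pi=\pi_{2l}$ yields $F^1\CH_l(X)=\ker(\pi_{2l})=\CH_l(X)_\hom$.

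For the independence of $F$ over $\C$, the analysis of the previous step yields the direct-sum decomposition $\CH_l(X)=\bigoplus_{j=2l}^{2l+2}(\pi_j)_*\CH_l(X)$. Proposition~\ref{MurreC} characterises $F^1$ as $\CH_l(X)_\hom$ (first bullet) and, applied to $\pi_{2l+1}$, characterises $F^2$ as $\ker(AJ_l)$ (second bullet). The decomposition then forces $F^2=(\pi_{2l+2})_*\CH_l(X)$; since $\pi_{2l+2}$ is idempotent and acts as the identity on its own image, $F^3=F^2\cap\ker(\pi_{2l+2})=0$, and consequently $F^i=0$ for all $i\geq 3$. Each piece of the filtration is thus described by an intrinsic invariant of $X$ that does not refer to the chosen special Chow--K\"unneth decomposition. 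The main technical obstacle I foresee is the explicit case analysis of $(\pi_{2i})_*\CH_l(X)$ in step two --- controlling it via the decomposition $\h_2(S_i)=\h_2^{tr}(S_i)\oplus\h_2^{alg}(S_i)$ together with the vanishing of $q$ on the various cohomological pieces of $S_i$, and treating the boundary $j=2d$ by a direct Abel--Jacobi argument; the rest flows cleanly from Propositions~\ref{MurreB} and~\ref{MurreC} once the twisted motives have been embedded into motives of untwisted varieties of the appropriate dimension.
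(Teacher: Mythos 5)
Your argument is correct and follows essentially the same route as the paper, whose own proof is a three-line affair declaring (B) obvious and citing Propositions~\ref{MurreB} and~\ref{MurreC}; your case analysis of $(\pi_j)_*\CH_l(X)$ via the concentration of the factoring idempotent $q$ in degree $2$, and your identification $F^1=\CH_l(X)_\hom$, $F^2=\ker AJ_l$, $F^3=0$, are exactly the details the paper leaves implicit. The one point where you supply something the cited propositions do not literally cover is the boundary case of (B), namely $\pi_{2d}$ acting on $\CH_{d-1}(X)$ (there the factoring variety $S_d\times\P^{d-1}$ has dimension $d+1>l+1$, so Proposition~\ref{MurreB} does not apply as stated), and your cycle-class-then-$\mathrm{Pic}^0$ argument resolves it correctly, provided you make explicit that you are using idempotency of $\pi_{2d}$ to write $(\pi_{2d})_*z=(\pi_{2d})_*\bigl((\pi_{2d})_*z\bigr)$ before invoking functoriality of the Abel--Jacobi map on the homologically trivial class $(\pi_{2d})_*z$.
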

\begin{proof} That homological and algebraic equivalence agree on $X$
  follows from the well-known fact that they agree on zero-cycles and
  on codimension-one cycles.  That $X$ satisfies Murre's conjectures
  (B) is obvious and that $X$ satisfies (D) follows from the first
  point of Proposition \ref{MurreC}. That the induced filtration on
  the Chow groups of $X$ is independent of the choice of a special
  Chow--K\"unneth decomposition for $X$ is contained in Proposition
  \ref{MurreC}.
\end{proof}

Since the Chow--K\"unneth decomposition of $X$ as in Theorem
\ref{theoremCK} is a special Chow--K\"unneth decomposition, we can
state the following.

\begin{theorem} \label{Murre-theorem} Let $f : X \r S$ be a dominant
  morphism defined over a field $k$ between a smooth projective
  variety $X$ of dimension $\leq 4$ and a smooth projective surface
  $S$ such that the general fibre of $f_\Omega$ has trivial Chow group
  of zero-cycles. Then $X$ has a special Chow--K\"unneth decomposition
  which is self-dual and which satisfies Murre's conjectures (B) and
  (D) and, if $ k \subseteq \C$, then the induced filtration $F$
  on $\CH_l(X)$ does not depend on the choice of a special
  Chow--K\"unneth decomposition for $X$. Finally, whichever the
  characteristic of $k$ is, if $X$ is Kimura finite-dimensional, then
  $F$ does not depend on the choice of a Chow--K\"unneth decomposition
  for $X$.
\end{theorem}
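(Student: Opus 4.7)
The plan is to assemble this theorem by bundling together the outputs of the previous sections: Theorem \ref{theoremCK} produces the Chow--K\"unneth decomposition, Proposition \ref{specialMurre} (fed by Propositions \ref{MurreB} and \ref{MurreC}) delivers (B), (D) and the independence of the filtration among special Chow--K\"unneth decompositions, and the Kimura statement reduces to a general fact about finite-dimensional motives.

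First I would check that the decomposition $\{p_i\}_{0 \leq i \leq 2d}$ constructed in \S\ref{CK} is a special Chow--K\"unneth decomposition in the sense of Definition \ref{special}. Theorem \ref{theoremCK} already supplies the information that $p_0$ factors through a point, $p_1,p_3$ factor through curves, and $p_2, p_4$ factor through surfaces. The remaining even and odd idempotents are handled by self-duality: since $p_{2d-i} = {}^tp_i$, dualising the corresponding factorisations shows that $p_{2d}, p_{2d-2}, p_{2d-4}$ factor through (duals of) surfaces and $p_{2d-1}, p_{2d-3}$ factor through (duals of) curves. The remaining task is purely bookkeeping on Tate twists: one must verify that the factorisations through $\h(P_0)$, $\h_1(C_0)$, $\h(S_0)$, $\h_1(C_1)(1)$, $\h(S_1)(1)$ and their duals can be rewritten so that $(X,p_{2i})$ is a direct summand of $\h(S_i)(i-1)$ and $(X,p_{2i+1})$ a direct summand of $\h_1(C_i)(i)$; embedding a point in any surface (via the class of a point) and dualising where needed yields the required shifts. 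The middle idempotent $p_4$ (for $d=4$) was already shown in \S\ref{CK} to factor through a surface with twist~$1$, and for $d=3$ Proposition \ref{middle-curve} gives the required factorisation of $p_3$ through $\h_1(C_1)(1)$.

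Once specialness is established, Proposition \ref{specialMurre} immediately yields Murre's conjecture (B) and conjecture (D), and over $k\subseteq\C$ gives the independence of the induced filtration $F$ on $\CH_l(X)$ among all choices of special Chow--K\"unneth decomposition. Self-duality is inherited directly from the construction in \S\ref{CK}.

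Finally, for the Kimura finite-dimensional case, I would invoke \cite[Proposition 3.1]{Vial2} (the same input used at the end of Theorem \ref{CKSurface}), which asserts that for a Kimura finite-dimensional motive the filtration associated to a Chow--K\"unneth decomposition is independent of the choice of decomposition. Since finite-dimensionality is intrinsic to $\h(X)$, this gives conjecture (C) in the strong form, in any characteristic. The main conceptual obstacle is not any single step but the verification of specialness in the previous paragraph: one must keep track of the exact Tate twists under which the factorisations $p_i = f_i \circ g_i$ of \S\ref{CK} land, and this is where the hypothesis $\dim X\leq 4$ is crucially used, since for $d\leq 4$ the index $i$ appearing in Definition \ref{special} never exceeds the range in which the factorisations of Theorem \ref{theoremCK} are available.
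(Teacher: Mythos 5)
Your proposal is correct and follows essentially the same route as the paper: Theorem \ref{theoremCK} supplies the self-dual special Chow--K\"unneth decomposition, Proposition \ref{specialMurre} gives (B), (D) and the independence of $F$ among special decompositions, and \cite[Proposition 3.1]{Vial2} handles the Kimura finite-dimensional case. The paper simply asserts that the decomposition of Theorem \ref{theoremCK} is special, whereas you spell out the Tate-twist bookkeeping (including the dual idempotents $p_{2d-i}={}^tp_i$); that verification is accurate and only makes explicit what the paper leaves implicit.
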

\begin{proof}
  Theorem \ref{theoremCK} says that $X$ has a self-dual
  Chow--K\"unneth decomposition $\{p_i : 0 \leq i \leq 2d\}$ which is
  special. We may then conclude with Proposition \ref{specialMurre}
  that $X$ satisfies Murre's conjectures (B) and (D) and that, if
  $\mathrm{char} \ k=0$, then the induced filtration $F$ on $\CH_l(X)$
  does not depend on the choice of a special Chow--K\"unneth
  decomposition for $X$. When $X$ is Kimura finite-dimensional,
  Murre's conjecture (C) follows from applying \cite[Proposition
  3.1]{Vial2} to $X$ endowed with the Chow--K\"unneth decomposition
  given by the $p_i$'s.
\end{proof}

\section{Murre's conjectures for $X \times C$} \label{Murreconj2}

Let $f : X \r S$ be a dominant morphism from a smooth projective
fourfold to a smooth projective surface such that the general fibre of
$f_\Omega$ has trivial Chow group of zero-cycles. Consider the
self-dual Chow--K\"unneth decomposition $\{p_i : 0 \leq i \leq 8\}$ of
$X$ given in Theorem \ref{theoremCK} which, by Proposition
\ref{specialMurre}, is a Murre decomposition.  Let $C$ be a smooth
projective curve and let $\{p_0^C, p_1^C, p_2^C\}$ be a self-dual
Chow--K\"unneth decomposition for $C$ as described in \cite{Scholl}.
Then the variety $X \times C$ has a self-dual Chow--K\"unneth
decomposition given by $q_l := \sum_{i+j=l} p_i \times p_j^C$.  The
results of \cite{Vial2} make it possible to prove the following.

\begin{theorem}
  The fivefold $X \times C$ endowed with the above self-dual
  Chow--K\"unneth decomposition satisfies Murre's conjectures (A), (B)
  and (D).
\end{theorem}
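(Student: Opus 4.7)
\emph{Plan.} Conjecture (A) is automatic from the K\"unneth formula, and self-duality is preserved: since ${}^tp_{8-i}=p_i$ and ${}^tp^C_{2-j}=p^C_j$, one has ${}^tq_l=\sum_{i+j=l}p_{8-i}\times p^C_{2-j}=q_{10-l}$.

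For (B) and (D) I would argue summand-wise. By Theorem \ref{theoremCK}, each $(X,p_i)$ is a direct summand of $\h(Y_i)(r_i)$ with $Y_i$ of dimension at most $2$ (explicitly a point for $i\in\{0,8\}$, a curve for $i\in\{1,3,5,7\}$, and a surface for $i\in\{2,4,6\}$, with Tate twist $r_i$ determined by self-duality); likewise the decomposition $\h(C)\cong\mathds{1}\oplus\h_1(C)\oplus\mathds{1}(1)$ realises each $(C,p_j^C)$ as a summand of $\h(Y_j^C)(s_j)$ with $\dim Y_j^C\le 1$ and $s_j\in\{0,1\}$. Consequently $(X\times C,p_i\times p_j^C)\cong(X,p_i)\otimes(C,p_j^C)$ is a direct summand of $\h(Y_i\times Y_j^C)(r_i+s_j)$, a twisted motive of a variety of dimension at most $3$. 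One then applies the twisted versions of Propositions \ref{MurreB} and \ref{MurreC} from \cite[\S 4]{Vial2} to each summand. To prove (B) one must show $\CH_l((X,p_i)\otimes(C,p_j^C))=0$ whenever $i+j=m\notin[2l,l+5]$: on the high side, the inequality $r_i+s_j>l$ forces $\CH_{l-r_i-s_j}(Y_i\times Y_j^C)=0$; on the low side one uses the fact that each $(X,p_i)$ has its Chow groups concentrated in a restricted range of degrees --- a consequence of Theorem \ref{CKSurface}, since the $\h_1$-of-a-curve summands have Chow only in the single degree $r_i$, while the surface summands decompose into transcendental and algebraic (Picard) parts with Chow concentrated in just two degrees. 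For (D), the inclusion $\ker(q_{2l})_*\supseteq\CH_l(X\times C)_{\hom}$ follows by applying the twisted Proposition \ref{MurreB} to each summand with $i+j=2l$: the cohomological hypothesis $(p_i\times p_j^C)_*H_{2l+1}(X\times C)=0$ is automatic from the K\"unneth description of homology, and the effective-dimension bound $\dim Y_i+\dim Y_j^C+r_i+s_j\le l+1$ is checked case-by-case from the tabulation of $(\dim Y_i+r_i)$ provided by Theorem \ref{theoremCK}. The reverse inclusion $\ker(q_{2l})_*\subseteq\CH_l(X\times C)_{\hom}$ is immediate from the functoriality of the cycle class map.

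The main technical point is the twisted and tensor-product extension of Propositions \ref{MurreB} and \ref{MurreC}. In the untwisted case, the hypothesis $\dim Y\le l+1$ is used so that $\CH^1(Y)=\CH_l(Y)$, allowing the control of $g\circ f$ on $H^1(Y)$ to translate into control on $\CH_l(X)_{\hom}$. In the twisted case one needs instead $\dim Y+n\le l+1$ for a factorisation through $\h(Y)(n)$, and the argument is otherwise identical. This is the bookkeeping carried out in \cite[\S 4]{Vial2}, and once those extended statements are in hand the verification above is routine.
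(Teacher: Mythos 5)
Your proof is correct and follows essentially the same route as the paper: decompose each $q_l$ into the summands $p_i\times p_j^C$, observe via Theorem \ref{theoremCK} and Scholl's decomposition of $\h(C)$ that each summand factors through a twisted motive of a variety of small effective dimension, and then invoke Propositions \ref{MurreB} and \ref{MurreC}. The paper's proof is terser (it only records that $q_i$ factors through a twisted surface, curve or threefold according to the parity of $i$), while you make explicit the twist bookkeeping and the reduction of the twisted case to the stated propositions via $\h(Y)(n)\hookrightarrow \h(Y\times\P^n)$; this is exactly the implicit content of the paper's citation of those propositions.
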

\begin{proof} The idempotents $p_0$, $p_1$, $p_2$, $p_3$, $p_0^C$ and
  $p_1^C$ factor through varieties of respective dimension $0$,
  $1,2,1,0$ and $1$. Therefore, $(X,q_i)$ is isomorphic to a direct
  summand of the twisted motive of a surface for $i$ even. It is
  isomorphic to the direct summand of the twisted motive of a curve
  for $i = 1$ or $9$, and it is isomorphic to the direct summand of
  the twisted motive of a threefold for $i$ odd $\neq 1, 9$. Murre's
  conjectures (B) and (D) for $X$ endowed with the Chow--K\"unneth
  decomposition given by the $q_l$'s then follow immediately from
  Proposition \ref{MurreB} and from the first point of Proposition
  \ref{MurreC}.
\end{proof}

\section{Application to the finite-dimensionality problem}
\label{fdprob}

Kimura \cite{Kimura} introduced the notion of finite-dimensionality
for Chow motives. There he proved that any variety dominated by a
product of curves is finite-dimensional. It was proved by Guletskii
and Pedrini \cite{GP} that a surface with representable Chow group of
zero-cycles is Kimura finite-dimensional.  Gorchinskiy and Guletskii
\cite{GG} then proved that a threefold with representable Chow group
of zero-cycle is Kimura finite-dimensional.  This was subsequently
generalised to varieties of any dimension in \cite{Vial3} and to pure
motives in \cite{Vial1}. In their paper, Gorchinskiy and Guletskii
also prove \cite[Theorem 15]{GG} that, when $X$ is fibred over a curve
by del Pezzo or Enriques surfaces over an algebraically closed field
of characteristic zero, then $X$ has representable Chow group of
zero-cycles. Their method involves looking at the singular fibres of
the family.  Our Theorem \ref{isogeny} is more general and immediately
gives

\begin{theorem} \label{surface-Kimura} Let $X$ be a smooth projective
  threefold over a field $k$ and let $f : X \r C$ be a dominant
  morphism over a curve $C$ such that the general fibre of $f_\Omega$
  has trivial Chow group of zero-cycles. Then $X$ has representable
  Chow group of zero-cycles and is finite-dimensional in the sense of
  Kimura.
\end{theorem}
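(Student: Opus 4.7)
The plan is to reduce the statement to the theorem of Gorchinskiy--Guletskii \cite{GG} that any smooth projective threefold with representable Chow group of zero-cycles is Kimura finite-dimensional. Thus the real content is to establish representability of $\CH_0(X)$; Kimura finite-dimensionality then comes for free.

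For this, I would apply Theorem \ref{isogeny} to $f : X \r C$. Since $\dim X = 3$ and $\dim C = 1$, the theorem produces a correspondence $\Gamma \in \CH^{3}(C \times X) = \CH_{1}(C \times X)$, defined over a field of definition of $f$ (in particular over $k$), such that $f_* : \CH_0(X) \r \CH_0(C)$ and $\Gamma_* : \CH_0(C) \r \CH_0(X)$ are mutually inverse bijections. Both the identities $f_* \circ \Gamma_* = \id$ and $\Gamma_* \circ f_* = \id$ are stable under the base change $k \hookrightarrow \Omega$, so the same holds after base change, yielding a bijection $(\Gamma_\Omega)_* : \CH_0(C_\Omega) \r \CH_0(X_\Omega)$.

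Next, I would observe that $f_*$ and $(\Gamma_\Omega)_*$ each send algebraically trivial zero-cycles to algebraically trivial zero-cycles (correspondence actions always do), and therefore restrict to mutually inverse bijections $\CH_0(C_\Omega)_{\alg} \leftrightarrows \CH_0(X_\Omega)_{\alg}$. In particular, $(\Gamma_\Omega)_* \CH_0(C_\Omega)_{\alg} = \CH_0(X_\Omega)_{\alg}$. Taking the curve in the paper's definition of representability to be $C_\Omega$ and the correspondence to be $\Gamma_\Omega \in \CH_1(C_\Omega \times X_\Omega)$ then shows that $\CH_0(X)$ is representable. Invoking \cite{GG} gives the remaining Kimura finite-dimensionality claim.

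The argument is essentially a direct packaging of two ingredients already on the table: Theorem \ref{isogeny} from section \ref{geom} and the Gorchinskiy--Guletskii theorem cited at the beginning of this section. There is no genuine obstacle. The only minor subtlety is the descent of the bijectivity statement from $\Omega$ back to verifying representability in the precise sense of the paper's definition, but this is handled by the observation that $\Gamma$ is defined over $k$ and hence its base change to $\Omega$ is available as the required correspondence.
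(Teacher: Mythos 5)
Your proposal is correct and follows essentially the same route as the paper, which derives this theorem directly from Theorem \ref{isogeny} (the correspondence-induced bijection $\CH_0(C_\Omega) \to \CH_0(X_\Omega)$, restricted to algebraically trivial cycles, witnesses representability) combined with the Gorchinskiy--Guletskii result that threefolds with representable $\CH_0$ are Kimura finite-dimensional. The only cosmetic point is that Theorem \ref{isogeny} should be applied to $f_\Omega$ (whose general fibre is the one assumed to have trivial $\CH_0$), with the correspondence then descending to $k$ as that theorem guarantees; this does not affect the argument.
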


Godeaux surfaces are examples of surfaces of general type with trivial
Chow group of zero-cycles \cite{Godeaux}. Therefore, new cases
encompassed by the above theorem are given by threefolds fibred by
Godeaux surfaces over a curve. Let's make this more precise.

Let $\zeta$ be a primitive fifth root of unity. The group $G = \Z/5\Z$
acts on the complex projective space $\P_\C^3$ in the following way :
$\zeta \cdot [x_0:x_1:x_2:x_3] = [x_0:\zeta x_1: \zeta^2 x_2:
\zeta^3x_3]$. Let $\bar{V} := H^0(\P^3,O_{\P^3}(5))^G$ be the subspace
of $H^0(\P^3,O_{\P^3}(5))$ consisting of elements invariant under the
action of $G$ and let $V \hookrightarrow \bar{V}$ be the Zariski open
subset of $\bar{V}$ consisting of elements defining smooth quintic
surfaces. The monomials $X_i^5$ belong to $\bar{V}$ so that the
dimension of $\bar{V}$ is at least $4$. If $Y_v$ is a smooth quintic
in $\P^3$ given by the equation $v \in V$, then a local computation
shows that $Y_v$ cannot contain the fixed points of the action of $G$
on $\P^3$, so that the action of $G$ restricts to a free action on
$Y_v$. The quotient space $X_v := Y_v/G$ is a smooth projective
surface called a Godeaux surface.  These Godeaux surfaces fit into a
family $X \r \P (\bar{V})$.

Let's consider a smooth projective curve $C$ in $\P(\bar{V})$ that
meets $\P(\bar{V} - V)$ transversely. Then ${X}$ restricted to $C$
gives a smooth projective threefold ${X}|_C \r C$ with general fibre a
Godeaux surface. If $C$ is of general type ($g(C) \geq 2$), then, by a
result of Viehweg \cite{Viehweg} which is a special instance of the
Iitaka conjecture, ${X}|_C$ is a threefold of general type. We
have thus exhibited new examples of threefolds of general type with
representable Chow group of zero-cycles (obvious examples are given by
the product of a curve of general type with a Godeaux surface). Such
threefolds are also Kimura finite-dimensional thanks to \cite{GG}.
\medskip

In the following theorem, by conic fibration, we mean a dominant
morphism $X \r S$ whose general fibre is a conic.

\begin{theorem} \label{conic-Kimura} Let $X$ be a smooth projective
  threefold which is a conic fibration over a surface $S$ which is
  Kimura finite-dimensional. Then $X$ is finite-dimensional in the
  sense of Kimura.
\end{theorem}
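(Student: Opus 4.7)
Since a smooth conic over the universal domain $\Omega$ is isomorphic to $\P^1$, the general fibre of $f_\Omega$ has trivial Chow group of zero-cycles, so the hypotheses of Theorem \ref{theoremCK} are satisfied. The plan is to reduce the Kimura finite-dimensionality of $X$ to that of the building blocks of the self-dual Chow--K\"unneth decomposition constructed in Section \ref{CK}: each piece will be either automatically finite-dimensional (motives of points and curves) or will reduce to $\h(S)$, which is finite-dimensional by assumption.

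Concretely, invoking the threefold version of Theorem \ref{theoremCK} in \S\ref{dim3} together with Proposition \ref{middle-curve}, I obtain a self-dual Chow--K\"unneth decomposition $\{p_0,\ldots,p_6\}$ of $X$ such that $(X,p_0)\simeq\mathds{1}$ and $(X,p_6)\simeq\mathds{1}(3)$; each of $(X,p_1), (X,p_3), (X,p_5)$ is a direct summand of $\h_1(C)$ (suitably Tate-twisted) for some smooth projective curve $C$; and, by the construction in \S\ref{firstproj}--\S\ref{proj3}, the middle piece $(X,p_2)$ decomposes as the orthogonal sum of a direct summand $(X,\pi_2^{tr})$ of $(S,\pi_2^{tr,S})\subset\h(S)$, together with a direct sum of copies of the unit motive $\mathds{1}$. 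The remaining piece $(X,p_4)={}^t(X,p_2)$ is obtained by duality.

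I then invoke the formal properties of Kimura finite-dimensionality: it is stable under direct summands, direct sums, duals, and Tate twists; $\mathds{1}$ is evenly finite-dimensional; and $\h_1(C)$ is oddly finite-dimensional for any smooth projective curve $C$ (Kimura's original theorem). Combining these facts with the decomposition above, all of $(X,p_0), (X,p_1), (X,p_3), (X,p_5), (X,p_6)$ are finite-dimensional automatically. For the weight-two and weight-four pieces, the hypothesis that $S$ is Kimura finite-dimensional implies that $\h(S)$ is finite-dimensional, whence so are its direct summands $(X,\pi_2^{tr})$ and $(X,p_2)$, as well as the dual $(X,p_4)$. Summing all seven pieces, $\h(X)$ is Kimura finite-dimensional.

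The substantive work has already been done in Section \ref{CK}, where each idempotent of the Chow--K\"unneth decomposition was shown to factor through a very restricted class of varieties. There is no genuine obstacle; the only point requiring attention is to verify that the ``surface through which $p_2$ factors'' can be taken to involve only $S$ itself (together with some point motives), so that the assumption on $S$ is precisely what is required---this is transparent from the explicit definition $\pi_2^{tr}=\Gamma\circ\pi_2^{tr,S}\circ\Gamma_f$ given in \S\ref{firstproj}.
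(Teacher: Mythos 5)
Your proposal is correct and follows essentially the same route as the paper: both decompose $\h(X)$ via the Chow--K\"unneth construction of Section \ref{CK} into twisted motives of points, direct summands of twisted $\h_1$ of curves, and the piece $(X,\pi_2^{tr})\simeq(S,\pi_2^{tr,S})$ cut out of $\h(S)$, and then invoke finite-dimensionality of point and curve motives together with stability of Kimura finite-dimensionality under direct summands and sums. The only cosmetic difference is that you first pass to the orthonormalised projectors $p_i$ and then split $p_2$ back into its transcendental and algebraic parts, whereas the paper works directly with the finer decomposition $\Delta_X = p_0+p_1+p_2^{tr}+p_2^{alg}+p_3+{}^tp_2^{alg}+{}^tp_2^{tr}+{}^tp_1+{}^tp_0$.
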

\begin{proof}
  In section \ref{CK}, we proved that there is an orthogonal
  decomposition of the diagonal $\Delta_X = p_0 +p_1+p_2^{tr} +
  p_2^{alg} + p_3 + {}^tp_2^{alg}+ {}^tp_2^{tr}+{}^tp_1+{}^tp_0$ with
  $(X,p_0)$ and $(X,p_2^{alg})$ isomorphic to twisted motives of
  points, $(X,p_1)$ and $(X,p_3)$ isomorphic to direct summands of
  twisted motives of curves; and with $(X,p_2^{tr})$ isomorphic to
  $(S,\pi_2^{tr,S})$. Motives of points and motives of curves are
  finite-dimensional \cite{Kimura}. Since $S$ is Kimura
  finite-dimensional by assumption and since finite-dimensionality is
  stable under direct summand \cite{Kimura}, we have that
  $(X,p_2^{tr})$ is finite-dimensional.  Therefore $X$ is Kimura
  finite-dimensional.
\end{proof}

\begin{theorem} \label{C-Murre2}
  Let $X$ be as in Theorem \ref{surface-Kimura} or as in Theorem
  \ref{conic-Kimura}. Then $X$ satisfies Murre's conjectures (A),
  (B), (C) and (D).
\end{theorem}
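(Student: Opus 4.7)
The proof should be essentially an assembly of results already established in the paper. In both situations, $X$ is a smooth projective threefold to which the construction of \S\ref{CK} applies (with base either a curve or a surface, and general fibre either trivial-Chow or a conic, which also has trivial Chow group of zero-cycles). From Theorem \ref{theoremCK}, or in the curve-base case from the $\dim S \leq 1$ version treated in \cite{Vial3, Vial2}, one obtains a self-dual Chow--K\"unneth decomposition $\{p_i\}_{0\leq i \leq 6}$ for $X$ which is \emph{special} in the sense of Definition \ref{special}: the even-indexed projectors factor through surfaces and the odd-indexed projectors through curves. This already yields Murre's conjecture (A).

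Invoking Proposition \ref{specialMurre}, this special Chow--K\"unneth decomposition automatically satisfies Murre's conjectures (B) and (D). Indeed Propositions \ref{MurreB} and \ref{MurreC} were tailored to projectors factoring through varieties of dimension $\leq l+1$, and the speciality of the decomposition places every $p_i$ in the range where those propositions apply. So (A), (B) and (D) come for free from the work already done in sections \ref{CK} and \ref{Murreconj}.

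The only remaining point is Murre's conjecture (C), asking that the induced filtration $F^\bullet\CH_l(X)$ be independent of the choice of Chow--K\"unneth decomposition. This is precisely where Kimura finite-dimensionality enters: Theorem \ref{surface-Kimura} supplies it in the first case, and Theorem \ref{conic-Kimura} in the second. Given finite-dimensionality, (C) follows by applying \cite[Proposition 3.1]{Vial2} to any Chow--K\"unneth decomposition of $X$, exactly as is already done in the last sentence of the proof of Theorem \ref{Murre-theorem}.

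There is no genuine obstacle here: every ingredient is in place. The interest of stating Theorem \ref{C-Murre2} separately is that, beyond Theorem \ref{Murre-theorem}, one now has finite-dimensionality of $X$ available (propagated either from the general fibre in the curve-base case, or from the target surface through the conic-fibration argument of Theorem \ref{conic-Kimura}), which upgrades the partial Murre decomposition into the full package (A)--(D) regardless of the characteristic of $k$.
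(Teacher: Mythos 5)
Your argument for (A), (B) and (D) coincides with the paper's: both rest on the special self-dual Chow--K\"unneth decomposition of Theorem \ref{theoremCK} (or its curve-base analogue from \cite{Vial2,Vial3}) together with Proposition \ref{specialMurre}. Where you diverge is on conjecture (C). You obtain it by feeding Kimura finite-dimensionality (supplied by Theorems \ref{surface-Kimura} and \ref{conic-Kimura}) into \cite[Proposition 3.1]{Vial2}, i.e.\ by re-running the final clause of Theorem \ref{Murre-theorem}. The paper instead invokes \cite[Theorem 4.8]{Vial2}, whose hypothesis is that the cohomology of $X$ be generated, via the action of correspondences, by the cohomology of surfaces; verifying this hypothesis is the actual content of the paper's proof, and it is done by combining the Lefschetz standard conjecture for $X$ (available through the motivic Lefschetz conjecture of \S\ref{motLef}) with a Bloch--Srinivas decomposition-of-the-diagonal argument showing that $H_3(X)$ is generated by the $H_1$ of a curve, using that $\CH_0(X_\Omega)$ is supported on a surface. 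Your route is shorter and is consistent with the paper's own earlier uses of \cite[Proposition 3.1]{Vial2} in Theorems \ref{CKSurface} and \ref{Murre-theorem}; the paper's route makes the geometric input (a coniveau statement on $H_3(X)$) explicit and goes through a criterion formulated for the variety itself rather than for a variety equipped with a particular decomposition. Both arguments use finite-dimensionality in an essential way, and I see no gap in yours provided \cite[Proposition 3.1]{Vial2} indeed yields independence of the filtration over arbitrary Chow--K\"unneth decompositions, which is exactly what the last sentence of Theorem \ref{Murre-theorem} asserts.
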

\begin{proof}
  By Theorem \ref{Murre-theorem}, $X$ is Kimura finite-dimensional and
  has a Chow--K\"unneth decomposition that satisfies Murre's
  conjectures (B) and (D). According to \cite[Theorem 4.8]{Vial2}, we
  can conclude that $X$ satisfies Murre's conjecture (C) if the
  cohomology of $X$ is generated via the action of correspondences by
  the cohomology of surfaces. By Theorem \ref{Murre-theorem} again,
  $X$ satisfies the Lefschetz standard conjecture. Therefore, it
  suffices to show that $H_3(X)$ is generated by the $H_1$ of a curve.
  But then, this follows, via Bloch--Srinivas \cite{BS}, from the fact
  that $\CH_0(X_\Omega)$ is supported on a surface.
\end{proof}

\section{A fourfold of general type satisfying Murre's conjectures}
\label{general}

In this section we wish to give explicit examples of fourfolds
satisfying the assumptions of Theorem \ref{mainth}. For this purpose,
we consider two-dimensional families of surfaces having trivial Chow
group of zero-cycles.

A first type of such families was already given in Theorem
\ref{ratconn2} and consisted in separably rationally connected
fibrations over a surface (separably rationally connected is the same
as rationally connected if the base field has characteristic zero).
Precisely, Theorem \ref{ratconn2} considered smooth projective
varieties $X$ over a field $k$ with an equidimensional map $X \r S$ to
a smooth projective surface with general fibre being separably
rationally connected.
However, such a fourfold is not of general type. A natural question is
to ask whether it is possible to construct a fourfold of general type
that has a self-dual Murre decomposition. In \cite[\S 2.3 \& Cor
4.12]{Vial2}, we produced examples of such fourfolds. These fourfolds
have the property of having trivial Chow group of zero-cycles. Obvious
examples were given by the product of two surfaces of general type
with trivial Chow group of zero-cycles (e.g. Godeaux surfaces).
Another example, a fourfold of Godeaux type, was given. The strategy
consisted in checking the validity of the generalised Hodge conjecture
for this fourfold.

We are now going to give an example of fourfold of general type with
non-trivial (and in fact non-representable) Chow group of zero-cycles
that has a self-dual Murre decomposition. Let's take up the notations
of the previous section and let's consider the family $X \r
\P(\bar{V})$. Let then $S$ be a high-degree (i.e. $\geq 5$) complete
intersection which is a smooth surface in $\P(\bar{V})$ meeting
$\P(\bar{V}-V)$ transversely. Then $X|_S$ is a projective fourfold
with $X|_S \r S$ having a smooth Godeaux surface as a general fibre. A
desingularization $X' \r X|_S$ gives a morphism $X' \r S$ with general
fibre being of general type and having trivial Chow group of
zero-cycles. This is because these two conditions are birational
invariants. The high-degree condition on $S$ imposes that $S$ is of
general type and has non-representable $\CH_0(S)_\alg$. Therefore, by
Viehweg's result \cite{Viehweg}, $X'$ is of general type; and, by
Theorem \ref{mainth}, $X'$ has a self-dual Murre decomposition which
satisfies the motivic Lefschetz conjecture.

\section{Application to unramified cohomology} \label{unram}

Following the fundamental result of Colliot-Th\'el\`ene, Sansuc and
Soul\'e \cite{CTSS} which asserts that the degree-three unramified
cohomology groups $H^3_{nr}(S/k,\Q_l/\Z_l(2))$ vanish for all prime
numbers $l$ for $S$ a smooth projective surface defined over a field
$k$ which is either finite or separably closed, it is proved in
\cite[Proposition 3.2]{CTK} that, if $X$ is a smooth projective variety
defined over a field $k$ which is either finite or separably closed
such that its Chow group of zero-cycles is supported on a surface,
then the groups $H^3_{nr}(X/k,\Q_l/\Z_l(2))$ are finite for all prime
numbers $l$ and vanish for almost all $l$.  Therefore, any variety $X$
defined over a finite field or a separably closed field such that its
restriction $X_\Omega$ to a universal domain $\Omega$ satisfies the
assumptions of Theorem \ref{isogeny} has finite degree-three
unramified cohomology $\bigoplus_l H^3_{nr}(X/k,\Q_l/\Z_l(2))$. In
particular, the fourfold of general type of section \ref{general}, when
defined over a finite field or a separably closed field, has finite
degree-three unramified cohomology. Furthermore, as a straightforward
application of Theorem \ref{repsurface}, we get

\begin{proposition} \label{bielliptic-unram} Let $f : X \r C$ be a
  dominant and generically smooth morphism from a smooth projective
  variety $X$ to a smooth projective curve $C$ defined over a field
  $k$ which is either finite or separably closed.  Assume that the
  general fibre $Y$ of $f_\Omega$ is such that $\CH_0(Y)_\alg$
  is representable.  Then $H^3_{nr}(X/k,\Q_l/\Z_l(2))$ is finite for
  all prime numbers $l$ and vanishes for almost all $l$.
\end{proposition}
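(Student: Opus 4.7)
The plan is to combine Theorem \ref{repsurface} with the result of Colliot-Th\'el\`ene--Kahn \cite[Proposition 3.2]{CTK} quoted just before the statement.

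First, I would apply Theorem \ref{repsurface} to the given morphism $f : X \r C$, with the curve $C$ playing the role of $S$. The hypothesis of generic smoothness is satisfied by assumption, and the general fibre $Y$ of $f_\Omega$ has representable $\CH_0(Y)_\alg$ by hypothesis. The conclusion is that $\CH_0(X)$ is supported in dimension $\dim C + 1 = 2$, that is, there exists a smooth projective surface $T$ defined over $k$ and a correspondence $\Gamma \in \CH^{\dim X}(T \times X)$ defined over $k$ such that $(\Gamma_\Omega)_* : \CH_0(T_\Omega) \r \CH_0(X_\Omega)$ is surjective.

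The key point here, and the main (though already resolved) obstacle, is that both $T$ and $\Gamma$ must be defined over the original field $k$. This is guaranteed by the descent property built into Theorem \ref{repsurface}, which is precisely why the remark preceding the present proposition emphasizes that this descent property is essential.

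Finally, since $k$ is either finite or separably closed, and since we have just exhibited $\CH_0(X)$ as supported on the surface $T$ over $k$, we are in the situation of \cite[Proposition 3.2]{CTK}. Applying that result yields that $H^3_{nr}(X/k,\Q_l/\Z_l(2))$ is finite for every prime $l$ and vanishes for almost all $l$, which is the desired conclusion.
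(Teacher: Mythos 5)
Your proof is correct and is exactly the argument the paper intends: the proposition is stated there ``as a straightforward application of Theorem \ref{repsurface}'', i.e.\ apply that theorem with $S=C$ to get a surface $T$ over $k$ and a correspondence defined over $k$ making $\CH_0(X)$ supported on $T$, then invoke \cite[Proposition 3.2]{CTK}. Your emphasis on the descent property (that $T$ and $\Gamma$ are defined over $k$ itself) matches the remark the author makes after Theorem \ref{repsurface}, so nothing is missing.
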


Since unramified cohomology is a birational invariant for smooth
projective varieties, the conclusion of the above theorem still holds
for a smooth projective variety $X'$ which is birational to the
variety $X$ of the theorem.  For instance, we get finiteness of degree
three unramified cohomology for threefolds which are the smooth
compactification of one-dimensional families of smooth projective
surfaces defined over a finite field or a separably closed field whose
generic member is a bielliptic surface.

\begin{footnotesize}
  \bibliographystyle{plain} % or alpha or plain
  \bibliography{bib} \medskip

  \textsc{DPMMS, University of Cambridge, Wilberforce Road, Cambridge,
    CB3 0WB, UK}
  \end{footnotesize}

  \textit{e-mail :} \texttt{c.vial@dpmms.cam.ac.uk}

\end{document}